\def\rr{\mathbb R}
\newtheorem{theoreme}{Theorem}[section]
\newtheorem{proposition}[theoreme]{Proposition}
\newtheorem{lem}[theoreme]{Lemma}
\newtheorem{corollary}[theoreme]{Corollary}
\newtheorem{rem}[theoreme]{Remark}
\newtheorem{definition}[theoreme]{Definition}
\theoremstyle{remark}
\numberwithin{equation}{section}
\begin{document}


\date{\today}

\title [Synchronization for Robin problem]{Approximate boundary synchronization by groups for a coupled system of wave equations with coupled  Robin boundary conditions}

\maketitle

\centerline {Tatsien Li\footnote {\rm   Corresponding author, School of Mathematical Sciences, Fudan University, Shanghai 200433, China;  Shanghai Key Laboratory for Contemporary Applied Mathematics;  Nonlinear Mathematical Modeling and Methods Laboratory, dqli@fudan.edu.cn;} \quad 
Bopeng  Rao\footnote {\rm
Institut de Recherche Math\'ematique Avanc\'ee, Universit\'e de Strasbourg, 67084  Strasbourg, France, bopeng.rao@math.unistra.fr }}

\bigskip

\begin{abstract} In this paper, we first give an algebraic characterization of  uniqueness of continuation  for a coupled system of wave equations with coupled  Robin boundary conditions. Then, the approximate boundary controllability and the approximate boundary synchronization by groups for a coupled system of wave equations with coupled  Robin boundary controls  are  developed around this fundamental characterization.\end{abstract}

\bigskip

{\bf Keywords:} Kalman's criterion, uniqueness of continuation,  Robin boundary controls, approximate boundary synchronization by groups.

\bigskip

{\bf Mathematics Subject Classification 2010} 93B05, 93B07, 93C20

\vskip1cm

\section{Introduction}

The phenomenon of synchronization was  observed by Huygens in 1665 \cite{Hugense}.
The first related mathematical research goes back to   Wiener \cite{wiener} in 1960's. The previous studies focused only on systems described by
ODE. The synchronization in the PDE case was first studied
for a coupled system of wave equations with Dirichlet boundary controls  by Li and Rao in  \cite{excD,  JMPA2016}  for the exact boundary synchronization, and  in \cite{RaoSicon, Cocv2017}  for the approximate  boundary synchronization.
Later, the synchronization for a coupled system of wave equations with Neumann boundary controls was carried  in \cite{ExConNeumann, ExSynNeumann}.  The most part  of the results was recently collected in the monograph  \cite{Book2019}.

In the framework of classical solutions,  the exact boundary  synchronization for a coupled system of 1-D wave equations with various boundary controls was   considered in  \cite{hulong, hulong2} for linear and  quasilinear cases.

Now let $\Omega\subset\mathbb R^n$ be a bounded domain  with smooth
boundary $\Gamma = \Gamma_1\cup\Gamma_0$ such that ${\overline
\Gamma_1}\cap{\overline \Gamma_0} = \emptyset.$
Let $A, B$ be  two matrices of order $N$ and $D$
a full column-rank matrix of order $N\times M$ with $M\leqslant N$.   Let
$U=(u^{(1)}, \cdots, u^{(N)})^T \hbox{ and } H=(h^{(1)}, \cdots, h^{(M)})^T$
stand for  the state variables and the boundary controls applied  on $\Gamma_1$, respectively.
Consider  the following coupled
system of wave equations with coupled Robin  boundary controls:
\begin{equation}\label {0.1}\left\lbrace
\begin{array}{ll} U''-\Delta U+ AU=0& \hbox{in }  (0, +\infty)\times \Omega, \\
U= 0& \hbox{on } (0, +\infty)\times \Gamma_0,\\
\partial_\nu U+BU =  DH& \hbox{on }   (0, +\infty)\times\Gamma_1\end{array}
\right.
\end{equation}
with  the initial condition
\begin{equation}\label{0.2}t=0:\quad U=\widehat U_0, \quad
U'=\widehat U_1 \quad  \hbox{in } \Omega,\end{equation}
where $\partial_\nu$ denotes the outward normal derivative.

In \cite{RobinExact}, the exact boundary  controllability for system  \eqref{0.1} was established as $M=N$. In the case of fewer boundary controls, namely,  as $M<N,$ the non-exact boundary  controllability was also established  for a parallelepiped domain. Moreover, the exact boundary synchronization  by $p$-groups was also studied under the condition $M=N-p$.
The exact boundary controllability  as well as  the exact boundary synchronization by $p$-groups are intrinsically linked with the number of applied boundary controls.
In order to reduce the number of boundary controls, we return to consider the  approximate boundary controllability and the approximate boundary synchronization by $p$-groups.

Consider the following  system for the adjoint variable $\Phi= (\phi^{(1)},\cdots, \phi^{(N)})^T:$
\begin{equation}\label{0.5}
\begin{cases} \Phi''-{\Delta} \Phi+A^T\Phi=0&\hbox{in }   (0, +\infty) \times \Omega,\\
\Phi = 0&\hbox{on }   (0, +\infty) \times \Gamma_0,\cr
\partial_\nu \Phi + B^T\Phi= 0 &\hbox{on } (0, +\infty) \times \Gamma_1 \end{cases}
\end{equation}
with the initial data
\begin{equation}\label{0.6} t=0:\quad \Phi=\widehat\Phi_0, \quad  \Phi'=\widehat\Phi_1\quad  \hbox{in }  \Omega.\end{equation}

We say (see Definitions \ref{th3.2} and \ref{th4.2}  below) that system  (\ref{0.1}) is approximately
controllable  at the time $T>0$,  if for any given  initial data $(\widehat U_0, \widehat U_1)$, there exists a  sequence
$\{H_n\}$ of boundary controls, such that the corresponding
sequence $\{U_n\}$ of solutions  goes to zero for $t\geqslant T$
as $n\rightarrow +\infty$. Accordingly  the adjoint system \eqref{0.5} is $D$-observable on  a finite time interval $[0,T]$, if the $D$-observation
 \begin{align}\label{0.7}
D^T \Phi \equiv 0 \quad  \hbox{on  } [0,T] \times \Gamma_1\end{align}
implies that  $\Phi \equiv 0.$

Similar to Dirichlet boundary controls in \cite{AppSynDirichlet}, the approximate boundary controllability  of system  \eqref{0.1}  is still equivalent to the $D$-observability of the adjoint system \eqref{0.5}.
The main interest of the approximate boundary controllability consists in the fact that  the rank $M$ of the matrix $D$ for realizing it  may  be   substantially smaller than the number $N$ of state variables.

For Dirichlet boundary controls,  it  was shown in \cite{RaoSicon} that the following Kalman's criterion
\begin{equation}\hbox{rank} (D, AD,\cdots, A^{N-1}D) =  N\end{equation} is  necessary for the $D$-observability of the corresponding adjoint system.
For coupled Robin boundary controls, we want to find a similar characterization  on the  matrices $A, B$ and $D$, which is necessary for
the $D$-observability
of the  adjoint system \eqref{0.5}.  But the situation  seems to be more complicated because of the presence of the second coupling matrix $B$.

Let $V$ be a subspace, which  is contained in Ker$(D^T)$ and invariant for both $A^T$ and $ B^T$. We observe that  system \eqref{0.5} is not $D$-observable in the subspace $V$.

We will construct a composite matrix $\mathcal R$ (see \eqref{2.2} below)  to characterize the subspace $V$  of this kind.
We will show that $\hbox{Ker}(\mathcal R^T)$ is the largest subspace of all the subspaces  which are  contained in Ker$(D^T)$ and invariant for $A^T$ and $ B^T$ (see Lemma \ref{th2.1} below).
As a direct consequence,  $\hbox{Ker}(\mathcal R^T)=\{0\}$ is a necessary condition   for the $D$-observability of the adjoint system \eqref{0.5}. The approximate boundary controllability will be first developed around this fundamental characterization.

Next, when $\hbox{Ker}(\mathcal R^T)= \mbox{Span}\{E_1, \cdots, E_p\}$, assume  that both $A$ and $B$ admit a common invariant subspace $\mbox{Span}\{e_1, \cdots, e_p\}$ such that
$(E_i, e_j)=\delta_{ij} (i, j=1, \cdots, p)$, then
both $\mbox{Span}\{e_1, \cdots, e_p\}$ and $ \mbox{Span}\{E_1, \cdots, E_p\}^\perp$  are invariant for $A$ and $B$. Moreover, the projection of system \eqref{0.1} on $\mbox{Span}\{e_1, \cdots, e_p\}$ is independent  of the applied boundary controls, therefore uncontrollable, while,  the projection  of system \eqref{0.1} on $\mbox{Span}\{E_1, \cdots, E_p\}^\perp$ is approximately null controllable. This is the basic idea  that  we develop in this  paper for the approximate boundary synchronization by $p$-groups.

The paper is organized as follows. In \S 2, we give an algebraic Lemma,  which  generalizes Kalman's criterion or Hautus test.   In  \S3, we establish the well-posedness of problems.  \S4 is  devoted
to the $D$-observability and the  approximate boundary null controllability.   Generally speaking,  the condition $\hbox{dim Ker}(\mathcal R^T)=0$  is not sufficient for   the uniqueness of continuation  of solutions to  the adjoint  system \eqref{0.5} with $D$-observation \eqref{0.6}. In fact, this is not  a standard type of Holmgren's uniqueness  theorem.
In \S5, we outline some known results on the topic. In  \S 6, we consider the  approximate boundary synchronization of system \eqref{0.1} in the case that $\hbox{dim Ker}(\mathcal R^T)\not =0$. For this purpose,  we first show that $\hbox{dim Ker}(\mathcal R^T)\leqslant 1$ is a  necessary condition for the  approximate boundary synchronization (Theorem \ref{th6.1}). Then, under the condition that $\hbox{dim Ker}(\mathcal R^T) = 1$, we show the necessity of the condition of $C_1$-compatibility for  the coupling matrices $A$ and $B$ related  to the synchronization matrix $C_1$, the independence  of the approximately  synchronizable states  with respect to the applied boundary controls, and the  approximate boundary synchronization under  the condition of $C_1$-compatibility (Theorems \ref{th6.2} and \ref{th6.3}). In \S7, we generalize the above consideration to the  approximate boundary synchronization by $p$-groups and carry on the study from  a
general point of view. In \S8, based on the sharp regularity  in Lasiecka and Triggiani \cite{Lasciecka1, Lasciecka2} on the solution  to  the wave equation with Neumann boundary conditions, we   establish the necessity  of some algebraic properties on the matrices $A$ and $ B$ for the existence of the approximately synchronizable state.

Let us comment some  related literatures. One of the motivation of studying  the synchronization  consists  of establishing a weak exact boundary controllability  in the case of  fewer boundary controls.
In order to realize the exact boundary controllability, because of its uniform character with respect to the state variables, the number of  boundary controls must be equal to  the degrees of freedom of the considered system. However,
when   the components of initial data are allowed to  have different levels of energy,
 the exact boundary  controllability  by means of only one  boundary control for a system of two
wave equations   was established  in  Liu and Rao \cite{liu-rao}, Rosier and de Teresa \cite{rosier}, and
for a cascade system of $N$ wave equations  in   Alabau-Boussouira   \cite{alabaub}.  In \cite{Dehman}, Dehman {\it et al}  established
the controllability of two coupled wave equations on a compact
manifold with only one local distributed control. Moreover, both the optimal time
of controllability  and the controllable spaces are given in the
cases with  the same or
different wave speeds.

The approximate boundary null controllability  is more flexible with respect to the number of applied boundary controls.
In Li and Rao \cite{RaoSicon} as well as in the present paper, for a coupled system of wave equations with Dirichlet/Neuman/Robin boundary controls, some fundamental  algebraic properties on the coupling matrices are  used to characterize the uniqueness of continuation for the solution to the corresponding adjoint systems.
Although these  criteria are only necessary in general, they open  an important way to the research on the uniqueness of continuation for the system of hyperbolic partial differential equations.

In contrast with hyperbolic systems, in Ammar Khodja \cite{Dehman} (also \cite{ammar2} and the reference therein), it was shown that  Kalman's criterion is sufficient to the exact boundary  null  controllability for systems of parabolic  equations.
Recently,  Wang and Yuan \cite{wanglijuan} have established the minimal time for a control problem related to the exact synchronization for a linear parabolic system.

The average controllability proposed by Zuazua in \cite{zuazua, luq} gives another way to deal with the controllability with fewer controls. The observability inequality is particularly interesting for a trial on the decay rate of  approximate  controllability.

\vskip1cm

\section{An algebraic Lemma}
Let $A $ be  a  matrix of order $N$ and $D$
a full column-rank matrix of order $N\times M$ with $M\leqslant N$.
We have shown that the following Kalman's criterion (see \cite{RaoSicon}):
\begin{equation}\hbox{rank} (D, AD,\cdots, A^{N-1}D) \geqslant   N-d\label{con}\end{equation}holds if and only if the dimension of any given
subspace,  contained in $\hbox{Ker}(D^T)$ and invariant for  $A^T$,
does not exceed $d$.
In particular,  the equality holds  if and only if the dimension of the largest subspace,   contained in $\hbox{Ker}(D^T)$ and  invariant for  $A^T$, is exactly equal to $d$.

Let $A, B$ be  two matrices of order $N$ and $D$
a full column-rank matrix of order $N\times M$ with $M\leqslant N$.
For any given non-negative  integers   $p, q,\cdots, r,s\geqslant 0$, we define a  matrix
of order $N\times M$  by
\begin{equation} \mathcal R_{(p, q ,\cdots,  r, s)} = A^pB^q\cdots A^rB^sD.\label{2.1}\end{equation}
We  construct  an enlarged  matrix
\begin{equation} \mathcal R= (\mathcal R_{(p, q ,\cdots,  r, s)},  \mathcal R_{(p', q' ,\cdots,  r', s')},\cdots )\label{2.2}\end{equation}
by   the matrices $\mathcal R_{(p, q, \cdots, r,s)}$ for all possible  $(p, q,\cdots, r,s)$, which, by   Theorem of Caylay-Hamilton, essentially constitute  a finite set $\mathcal M$ with dim$(\mathcal M) \leqslant MN$.

\begin{lem}  \label{th2.1}  $\hbox{Ker}(\mathcal R^T)$ is the largest subspace of all the subspaces  which are  contained in Ker$(D^T)$ and invariant for $A^T$ and $ B^T$. \end{lem}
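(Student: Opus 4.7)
The plan is to reinterpret $\hbox{Ker}(\mathcal{R}^T)$ as the set of vectors $x$ whose orbit under the unital multiplicative semigroup generated by $A^T$ and $B^T$ is contained in $\hbox{Ker}(D^T)$. Once that reformulation is in hand, the three required properties of $\hbox{Ker}(\mathcal{R}^T)$, namely containment in $\hbox{Ker}(D^T)$, invariance under $A^T$ and $B^T$, and maximality, each follow from a one-step word manipulation.

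First I would unpack the notation of \eqref{2.1}--\eqref{2.2}. A vector $x$ lies in $\hbox{Ker}(\mathcal{R}^T)$ if and only if $D^T(A^pB^q\cdots A^rB^s)^T x = 0$ for every admissible tuple $(p,q,\cdots,r,s)$. Allowing the tuple to have any length and any nonnegative exponents (including $0$), the products $A^pB^q\cdots A^rB^s$ cover exactly the set of all words in the free monoid on $\{A,B\}$, obtained by grouping consecutive like-letters into powers, with the empty word $I$ included. Taking transposes, this is equivalent to
\begin{equation*}
x\in \hbox{Ker}(\mathcal{R}^T)\ \Longleftrightarrow\ Vx\in \hbox{Ker}(D^T)\ \text{for every word } V \text{ in } A^T \text{ and } B^T.
\end{equation*}

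From this reformulation the three claims drop out. Taking $V=I$ gives $\hbox{Ker}(\mathcal{R}^T)\subseteq \hbox{Ker}(D^T)$. For invariance under $A^T$: if $x\in \hbox{Ker}(\mathcal{R}^T)$ and $V$ is any word in $A^T,B^T$, the concatenation $VA^T$ is again such a word, so $V(A^Tx)=(VA^T)x\in \hbox{Ker}(D^T)$ and hence $A^Tx\in \hbox{Ker}(\mathcal{R}^T)$; invariance under $B^T$ is identical. For maximality, let $W\subseteq \hbox{Ker}(D^T)$ be any subspace invariant under both $A^T$ and $B^T$. For $x\in W$ and any word $V$ in $A^T,B^T$, iterating invariance one letter at a time yields $Vx\in W\subseteq \hbox{Ker}(D^T)$; hence $x\in \hbox{Ker}(\mathcal{R}^T)$ by the equivalence above, so $W\subseteq \hbox{Ker}(\mathcal{R}^T)$.

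The only real obstacle is notational: recognizing that the doubly-indexed family $\mathcal{R}_{(p,q,\cdots,r,s)}$, as the tuple and its length vary, encodes the full orbit of $D$ under left-multiplication by arbitrary words in $A$ and $B$. Once that is clear, the lemma reduces to the closure of the set of such words under left-concatenation by $A^T$ or $B^T$. Finiteness of $\mathcal{R}$ as an actual matrix (needed only so that $\hbox{Ker}(\mathcal{R}^T)$ makes literal sense) is guaranteed by the observation in the excerpt that $\mathcal{M}$ lies in the $MN$-dimensional space of $N\times M$ matrices, and plays no role in the algebraic argument.
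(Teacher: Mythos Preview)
Your proof is correct and follows essentially the same approach as the paper's: both arguments rest on the observation that $x\in\hbox{Ker}(\mathcal R^T)$ precisely when every finite product of $A^T$'s and $B^T$'s applied to $x$ lands in $\hbox{Ker}(D^T)$, then derive containment, invariance, and maximality from the trivial-word case, one-letter left-concatenation, and iterated invariance, respectively. The only difference is cosmetic: you phrase things in terms of words in the free monoid on $\{A^T,B^T\}$, while the paper works directly with the explicit multi-indices $(p,q,\dots,r,s)$.
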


\begin{proof}  First, noting that  $\hbox{Im}(D)\subseteq \hbox {Im}(\mathcal R)$, we have $\hbox{Ker}(\mathcal R^T)\subseteq \hbox {Ker}(\mathcal D^T)$. We now show that   $\hbox{Ker}(\mathcal R^T)$   is  invariant for  $A^T$ and $ B^T$.   Let $x\in \hbox{Ker}(\mathcal R^T)$. We have
\begin{equation} D^T(B^T)^s(A^T)^r\cdots (B^T)^q(A^T)^p x=0\end{equation}
for any given integers $p,  q, \cdots, r, s\geqslant 0.$
Then, it follows that  $A^Tx\in  \hbox{Ker}(\mathcal R^T)$, namely, $\hbox{Ker}(\mathcal R^T)$ is invariant for $A^T.$ Similarly,  $\hbox{Ker}(\mathcal R^T)$ is invariant for $B^T.$ Thus,  the subspace
 $ \hbox{Ker}(\mathcal R^T)$ is contained in $\hbox{Ker}(D^T)$ and  invariant for both  $A^T$ and $ B^T$.

Now let $V$ be another subspace,  contained in Ker$(D^T)$ and invariant for  $A^T$ and $ B^T$. For any given $y\in V$, we have
\begin{equation}D^Ty=0,\quad  A^Ty\in V, \quad B^Ty\in V.\label{2.2b}\end{equation}
Then, it is easy to see that
\begin{equation} (B^T)^s(A^T)^r\cdots (B^T)^q(A^T)^py\in V\end{equation}
for any given integers $p,  q, \cdots, r, s\geqslant 0.$
Thus, by the first formula of \eqref{2.2b}  we have
\begin{equation} D^T(B^T)^s(A^T)^r\cdots (B^T)^q(A^T)^p y =0 \end{equation}
 for any given integers $p, q,\cdots, r,s\geqslant 0$,  namely, we have
\begin{equation} V\subseteq   \hbox{Ker}(\mathcal R^T).\label{2.3}\end{equation}
The proof is then complete.
\end{proof}

By the rank-nullity theorem, we have $\hbox{rank} (\mathcal R)   +  \hbox{dim Ker} (\mathcal R^T) = N.$
The following lemma is a dual version of Lemma \ref{th2.1}.

\begin{lem} \label{th2.2}  Let   $d\geqslant 0$ be an   integer.  Then

(i) the rank condition
\begin{equation}\hbox{rank} (\mathcal R) \geqslant N-d\label{2.4}\end{equation}
holds true  if and only if    the dimension of  any given subspace,  contained in Ker$(D^T)$ and invariant for  $A^T$ and $ B^T$,  does not exceed $d$;

(ii) the rank condition
\begin{equation}\hbox{rank} (\mathcal R) = N-d\label{2.4eq}\end{equation}
holds true if and only if   the dimension of  the largest  subspace,  contained in Ker$(D^T)$ and invariant for  $A^T$ and $ B^T$,  is exactly equal to $d$.
\end{lem}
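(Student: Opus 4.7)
The plan is to deduce Lemma \ref{th2.2} as an immediate corollary of Lemma \ref{th2.1} combined with the rank-nullity identity $\hbox{rank}(\mathcal R) + \hbox{dim Ker}(\mathcal R^T) = N$ already stated in the text. Since $\mathcal R$ has $N$ rows, the rank condition $\hbox{rank}(\mathcal R) \geqslant N-d$ is equivalent to $\hbox{dim Ker}(\mathcal R^T) \leqslant d$, and $\hbox{rank}(\mathcal R) = N-d$ is equivalent to $\hbox{dim Ker}(\mathcal R^T) = d$. Thus both parts of the statement reduce to a dimension assertion about $\hbox{Ker}(\mathcal R^T)$ alone, and Lemma \ref{th2.1} is precisely the tool that turns this dimension assertion into one about the family of subspaces contained in $\hbox{Ker}(D^T)$ and invariant for both $A^T$ and $B^T$.

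To prove (i), I would check each direction. Suppose first that $\hbox{rank}(\mathcal R) \geqslant N-d$, i.e. $\hbox{dim Ker}(\mathcal R^T) \leqslant d$. Given any subspace $V \subseteq \hbox{Ker}(D^T)$ invariant for $A^T$ and $B^T$, Lemma \ref{th2.1} yields $V \subseteq \hbox{Ker}(\mathcal R^T)$, so $\dim V \leqslant \hbox{dim Ker}(\mathcal R^T) \leqslant d$. Conversely, if every such subspace has dimension at most $d$, it suffices to apply this bound to the distinguished subspace $V = \hbox{Ker}(\mathcal R^T)$ itself, which by Lemma \ref{th2.1} lies in $\hbox{Ker}(D^T)$ and is invariant for $A^T$ and $B^T$; we obtain $\hbox{dim Ker}(\mathcal R^T) \leqslant d$, hence $\hbox{rank}(\mathcal R) \geqslant N-d$.

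For (ii), the forward direction combines (i) (every admissible subspace has dimension $\leqslant d$) with the observation that $\hbox{Ker}(\mathcal R^T)$ is itself an admissible subspace of dimension exactly $N - (N-d) = d$, so the largest one has dimension precisely $d$. For the converse, if the largest admissible subspace has dimension $d$, then by Lemma \ref{th2.1} this largest subspace is $\hbox{Ker}(\mathcal R^T)$, whence $\hbox{dim Ker}(\mathcal R^T) = d$ and the rank-nullity identity gives $\hbox{rank}(\mathcal R) = N-d$.

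There is no real obstacle here: the substantive content has already been packaged into Lemma \ref{th2.1}, which identifies $\hbox{Ker}(\mathcal R^T)$ as the unique maximum of the family of $A^T$- and $B^T$-invariant subspaces of $\hbox{Ker}(D^T)$. The present lemma is purely a dualization of that identification, translating the dimension of $\hbox{Ker}(\mathcal R^T)$ into a rank condition on $\mathcal R$ via rank-nullity, so the proof should be only a few lines long.
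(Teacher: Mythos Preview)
Your proposal is correct and follows essentially the same approach as the paper's own proof: both arguments reduce everything to Lemma \ref{th2.1} together with the rank-nullity identity $\hbox{rank}(\mathcal R)+\hbox{dim Ker}(\mathcal R^T)=N$. The only cosmetic difference is that for part (ii) the paper argues the existence of a $d$-dimensional admissible subspace by contradiction (if all had dimension $\leqslant d-1$ then part (i) would force $\hbox{rank}(\mathcal R)\geqslant N-d+1$), whereas you exhibit $\hbox{Ker}(\mathcal R^T)$ directly as that subspace; both are equivalent and equally short.
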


\begin{proof} (i) Let $V$  be  a   subspace  which is  contained in $\hbox{Ker}(D^T)$ and invariant for  $A^T$ and $ B^T$. By Lemma \ref{th2.1}, we have
 \begin{equation}  V\subseteq Ker (\mathcal R^T).\label{2.4bb}\end{equation}

Assume that \eqref{2.4} holds, it follows from  \eqref{2.4bb} that
\begin{equation}N-d\leqslant\hbox{rank} (\mathcal R) = N - \hbox{dim Ker} (\mathcal R^T)   \leqslant  N - \hbox{dim}(V),\end{equation}
namely,
\begin{equation}\hbox{dim}(V)\leqslant d.\label{2.4aa}\end{equation}

Conversely, assume that \eqref{2.4aa} holds for   any given subspace  $V$ which is  contained in $\hbox{Ker}(D^T)$ and invariant for  $A^T$ and $ B^T$.  In particular, by Lemma \ref{th2.1}, we have $\hbox{dim  Ker}(\mathcal R^T)\leqslant d.$ Then it follows that
\begin{equation} \hbox{rank} (\mathcal R) = N - \hbox{dim Ker} (\mathcal R^T)   \geqslant  N - d,\end{equation}The proof is then complete.

(ii)  Noting that (\ref{2.4eq}) can be written  as
 \begin{equation}\hbox{rank} (\mathcal R)\geqslant  N-d\label{2.9cc}\end{equation}
 and
 \begin{equation}\hbox{rank} (\mathcal R) \leqslant  N-d.\label{2.10cc}\end{equation}
By (i), the rank condition (\ref{2.9cc}) means that dim$(V)\leqslant d$ for any given  invariant
subspace  $V$ of $A^T$ and $B^T$,  contained in $\hbox{Ker}(D^T)$. We claim that there exists a subspace $V_0$, which is contained in $\hbox{Ker}(D^T)$ and  invariant
for  $A^T$ and   $B^T$, such that dim$(V_0) = d$.   Otherwise, all the subspaces of this kind have dimension less than or equal to $(d-1)$. By (i),  we  get \begin{equation}\hbox{rank} (\mathcal R)\geqslant  N-d+1,\end{equation}
 which contradicts \eqref{2.10cc}. It proves (ii).
\end{proof}

\begin{rem}In  the special  case that $B=I$,  it is easy to see that
\begin{equation}\mathcal R =  (D, AD,\cdots, A^{N-1}D). \end{equation}
Then,  by Lemma \ref{th2.2}, we find again  (see \cite{RaoSicon}) that  Kalman's criterion \eqref{con}
holds if and only if the dimension of any given
subspace,  contained in $\hbox{Ker}(D^T)$ and invariant for  $A^T$,
does not exceed $d$.
In particular,  the equality holds  if and only if the dimension of the largest subspace,   contained in $\hbox{Ker}(D^T)$  and invariant for  $A^T$, is exactly equal to $d$.\end{rem}

\vskip 1cm

\section{Well-posedness }

Let $\Omega\subset\mathbb R^n$ be a bounded domain  with smooth
boundary $\Gamma = \Gamma_1\cup\Gamma_0$ such that ${\overline
\Gamma_1}\cap{\overline \Gamma_0} = \emptyset.$
Let
$$U=(u^{(1)}, \cdots, u^{(N)})^T \hbox{ and } H=(h^{(1)}, \cdots, h^{(M)})^T$$
stands for  the state variables and the boundary controls applied  on $\Gamma_1$, respectively.
Consider  the following coupled
system of wave equations with coupled Robin  boundary controls:
\begin{equation}\label {4.1}\left\lbrace
\begin{array}{ll} U''-\Delta U+ AU=0& \hbox{in }  (0, +\infty)\times \Omega, \\
U= 0& \hbox{on } (0, +\infty)\times \Gamma_0,\\
\partial_\nu U+BU =  DH& \hbox{on }   (0, +\infty)\times\Gamma_1\end{array}
\right.
\end{equation}
with the initial condition
\begin{equation}\label{4.2}t=0:\quad U=\widehat U_0, \
U'=\widehat U_1 \quad  \hbox{in } \Omega,\end{equation}
where $\partial_\nu$ denotes the outward normal derivative.

Accordingly, let$$\Phi= (\phi^{(1)},\cdots, \phi^{(N)})^T.$$
Consider the following  adjoint system
\begin{equation}\label{3.1}
\begin{cases} \Phi''-{\Delta} \Phi+A^T\Phi=0&\hbox{in }   (0, +\infty) \times \Omega,\\
\Phi = 0&\hbox{on }   (0, +\infty) \times \Gamma_0,\cr
\partial_\nu \Phi + B^T\Phi= 0 &\hbox{on } (0, +\infty) \times \Gamma_1 \end{cases}
\end{equation}
with the initial data
\begin{equation}\label{3.2} t=0:\quad \Phi=\Phi_0, \  \Phi'=\Phi_1\quad  \hbox{in }  \Omega.\end{equation}

Denote
$${\mathcal H}_0= L^2(\Omega),  \quad {\mathcal H}_1 =    H^1_{\Gamma_0}(\Omega),  \quad {\mathcal L} =   L_{loc}^2(0,+\infty; {L}^2(\Gamma_1))
$$
and by $\mathcal H_{-1}$ the dual space of $\mathcal H_1$ with respect to the pivot space $\mathcal H_0$, here $ H^1_{\Gamma_0}(\Omega)$ denotes the subspace of  $H^1(\Omega)$,  composed of functions with null trace on the boundary $\Gamma_0$.

We first consider  the adjoint system \eqref{3.1}   with the homogeneous boundary conditions by  a direct  method given in \cite{Lions2}, which   has the advantage of applying  the semi-group approach in \cite{pazy} in the present situation.

\begin{proposition} \label{th3.1}  Assume that the matrix $B$ is symmetric. Then for any given initial data $(\Phi_0,\Phi_1)\in  (\mathcal H_1)^N\times (\mathcal  H_0)^N$,  the adjoint problem (\ref{3.1})-(\ref{3.2}) admits a unique solution:
\begin{equation}\Phi\in C_{loc}^0([0, +\infty); (\mathcal H_1)^N)\cap C_{loc}^1([0, +\infty); (\mathcal H_0)^N).\label{3.3}\end{equation}\end{proposition}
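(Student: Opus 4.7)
The plan is to recast the adjoint problem \eqref{3.1}--\eqref{3.2} as a first-order evolution equation on the Hilbert space $\mathcal{X}=(\mathcal H_1)^N\times(\mathcal H_0)^N$ and to invoke the semigroup theory of \cite{pazy}. Setting $\mathcal U=(\Phi,\Phi')^T$, I would introduce the unbounded operator
\[
\mathcal A(\phi,\psi)=(\psi,\Delta\phi-A^T\phi)
\]
on the natural domain
\[
D(\mathcal A)=\bigl\{(\phi,\psi)\in(\mathcal H_1)^N\times(\mathcal H_1)^N :\ \Delta\phi\in(\mathcal H_0)^N,\ \partial_\nu\phi+B^T\phi=0\ \text{on }\Gamma_1\bigr\},
\]
so that \eqref{3.1}--\eqref{3.2} becomes the Cauchy problem $\mathcal U'=\mathcal A\mathcal U$, $\mathcal U(0)=(\Phi_0,\Phi_1)$.

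Next, I would equip $\mathcal X$ with the shifted energy inner product
\[
\bigl\langle(\phi_1,\psi_1),(\phi_2,\psi_2)\bigr\rangle_{\lambda_0}=\int_\Omega\bigl(\nabla\phi_1\cdot\nabla\phi_2+\psi_1\cdot\psi_2+\lambda_0\,\phi_1\cdot\phi_2\bigr)\,dx+\int_{\Gamma_1}(B^T\phi_1,\phi_2)\,d\Gamma.
\]
The assumption that $B$ is symmetric guarantees that this bilinear form is symmetric, and, thanks to the trace inequality together with Poincaré on $\mathcal H_1$, for $\lambda_0$ large enough it is coercive and defines a norm equivalent to the natural one on $\mathcal X$. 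An integration by parts using the Robin condition on $\Gamma_1$ and the Dirichlet condition on $\Gamma_0$ then yields
\[
\bigl\langle\mathcal A(\phi,\psi),(\phi,\psi)\bigr\rangle_{\lambda_0}=\int_\Omega\bigl(\lambda_0\,\phi\cdot\psi-(A^T\phi)\cdot\psi\bigr)\,dx,
\]
which is bounded above by $\omega\|(\phi,\psi)\|_{\lambda_0}^2$ for some $\omega\in\mathbb R$; the same bound holds for $-\mathcal A$ by time-reversibility, so both $\pm\mathcal A-\omega I$ are dissipative.

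It remains to verify that $\lambda I-\mathcal A$ is surjective for some $\lambda>\omega$, which reduces to solving the elliptic boundary-value problem
\[
-\Delta\phi+\lambda^2\phi+A^T\phi=F\ \text{in }\Omega,\quad \phi=0\ \text{on }\Gamma_0,\quad \partial_\nu\phi+B^T\phi=0\ \text{on }\Gamma_1,
\]
for given $F\in(\mathcal H_0)^N$. Applying Lax--Milgram to the naturally associated bilinear form on $(\mathcal H_1)^N$, and absorbing the non-symmetric contribution of $A^T$ as well as the sign-indefinite boundary term $\int_{\Gamma_1}(B^T\phi,\phi)\,d\Gamma$ into the gradient term via the trace inequality, one obtains coercivity for $\lambda$ sufficiently large, hence a unique weak solution $\phi\in(\mathcal H_1)^N$; the regularity $\Delta\phi\in(\mathcal H_0)^N$ then follows from the equation. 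The Lumer--Phillips theorem (in its shifted form, see \cite{pazy}) then shows that $\mathcal A$ generates a $C^0$-group on $\mathcal X$, yielding the claimed regularity \eqref{3.3}.

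The main obstacle I anticipate is the absence of any symmetry assumption on $A$, which breaks exact energy conservation: a direct application of Stone's theorem is unavailable, so one must pass through the introduction of the shift $\lambda_0$, verify dissipativity only up to a zero-order perturbation, and establish coercivity of the resolvent problem with some care. The symmetry of $B$, in contrast, is genuinely essential here: it is used both to make the boundary contribution to the inner product symmetric and to keep the integration-by-parts identity for $\mathcal A$ clean; without it, neither the Hilbert structure on $\mathcal X$ nor the dissipativity estimate would close.
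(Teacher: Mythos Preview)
Your argument is correct, and the key analytic ingredients are the same as the paper's: the symmetry of $B$ to make the boundary contribution symmetric, the trace/interpolation inequality to absorb $\int_{\Gamma_1}(B\Phi,\Phi)\,d\Gamma$ into the gradient term modulo a lower-order shift, and a bounded perturbation argument to handle the non-symmetric coupling $A$. The route, however, is genuinely different. The paper does not set up a first-order system or invoke Lumer--Phillips; instead it writes the problem directly in second-order variational form
\[
\int_\Omega(\Phi'',\widehat\Phi)\,dx+\int_\Omega\langle\nabla\Phi,\nabla\widehat\Phi\rangle\,dx+\int_{\Gamma_1}(B\Phi,\widehat\Phi)\,d\Gamma+\int_\Omega(A\Phi,\widehat\Phi)\,dx=0,
\]
checks that the symmetric principal part is coercive after a shift and that the $A$-term is a bounded $(\mathcal H_0)^N$-perturbation, and then appeals to an abstract existence theorem for second-order operational equations (Theorem~1.1 in Lions \cite{Lions2}). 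Your semigroup construction is more self-contained and yields the $C^0$-group (hence time-reversibility) explicitly, at the cost of having to verify the resolvent range condition by hand via Lax--Milgram; the paper's route is shorter because Lions' theorem packages exactly the situation ``coercive symmetric bilinear form plus $L^2$-bounded non-symmetric perturbation'' once and for all.
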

\begin{proof}  We first  formulate  system (\ref{3.1}) into the following variational form:
\begin{equation}\int_\Omega (\Phi'',  \widehat \Phi)dx + \int_\Omega \langle \nabla\Phi, \nabla \widehat \Phi\rangle dx+
\int_{\Gamma_1}(B\Phi, \widehat  \Phi)d\Gamma +\int_\Omega (A\Phi,   \widehat  \Phi)dx =0 \label{3.4}\end{equation}
for any  given test function $ \widehat   \Phi \in (\mathcal H_1)^N$, where $(\cdot, \cdot)$ denotes the inner product of  $\mathbb R^N$, while
$\langle\cdot, \cdot\rangle$  denotes  the  inner product of $\mathbb M^{N\times N}(\mathbb R)$.
Recalling  the following  interpolation inequality
$$\int_\Gamma|\phi|^2d\Gamma\leqslant  c \|\phi\|_{H^1(\Omega)}\|\phi\|_{L^2(\Omega)},\quad \forall \phi\in H^1(\Omega),$$
 we have
$$\int_{\Gamma_1}(B\Phi, \Phi)d\Gamma \leqslant \|B\|\int_{\Gamma_1}|\Phi|^2d\Gamma
\leqslant c\|B\|\|\Phi\|_{(\mathcal H_1)^N}\|\Phi\|_{(\mathcal H_0)^N},$$
then it follows that
$$\int_\Omega \langle \nabla\Phi, \nabla\Phi\rangle dx+
\int_{\Gamma_1}(B\Phi,\Phi)d\Gamma +\lambda \|\Phi\|_{(\mathcal H_0)^N}^2\geqslant c'\|\Phi\|_{(\mathcal H_1)^N}^2$$
for some suitable constants $\lambda>0$ and $c'>0$. Therefore, the symmetric  bilinear form
$$ \int_\Omega \langle \nabla\Phi, \nabla \widehat  \Phi\rangle dx+
\int_{\Gamma_1}(B\Phi, \widehat   \Phi)d\Gamma $$ is coercive. Moreover, the non-symmetric part in \eqref{3.4} satisfies
$$\int_\Omega (A\Phi,   \widehat  \Phi)dx\leqslant \|A\|\|\Phi\|_{(\mathcal H_0)^N}\| \widehat  \Phi\|_{(\mathcal H_0)^N}.$$
By Theorem 1.1 (p. 151
in \cite{Lions2}),
the variational problem (\ref{3.4})  with the initial data (\ref{3.2}) admits a unique solution $\Phi$ with  (\ref{3.3}). The proof is complete.
\end{proof}
Now we  consider  problem \eqref{4.1}-\eqref{4.2} with inhomogeneous boundary conditions  by the duality method given in \cite{Lions}.
\begin{definition}  $U$ is a weak solution to   problem (\ref{4.1})-(\ref{4.2}), if
\begin{equation}\label{4.3}U\in C_{loc}^0([0, +\infty);  (\mathcal H_0)^N)\cap C_{loc}^1([0, +\infty);  (\mathcal H_{-1})^N)\end{equation}
such that
\begin{equation}\label{4.4}\begin{cases}
&\langle(U'(t),  -U(t)),   (\Phi(t), \Phi'(t))\rangle
= \langle(U_1,   -U_0), (\Phi_0, \Phi_1)\rangle
 \\& \displaystyle+\int_0^t \int_{\Gamma_1}(DH(\tau), \Phi(\tau) ) d\Gamma d\tau, \quad \forall t\geqslant 0\end{cases}\end{equation}
 holds for  the solution $\Phi$ to problem \eqref{3.1}
with any given initial data 
 $(\Phi_0, \Phi_1)\in ({\mathcal H}_1)^N \times ({\mathcal H}_0)^N$, here and hereafter
 $\langle\cdot,   \cdot\rangle$ denotes the dual product between $({\mathcal H}_{-1})^N \times ({\mathcal H}_0)^N$ and
 $({\mathcal H}_1)^N \times ({\mathcal H}_0)^N$.
\end{definition}
\begin{proposition}\label{th5.1} Assume that the matrix $B$ is symmetric.
Then for any given initial  data $(\widehat U_0, \widehat  U_1)\in ({\mathcal H}_0)^N \times ({\mathcal H}_{-1})^N$ and for any given boundary function  $H \in  {\mathcal L}^M$ with compact support in $[0, T]$,  problem (\ref{4.1})-(\ref{4.2})  admits a unique weak solution $U$.
Moreover,  the  linear mapping
\begin{equation}(\widehat U_0, \widehat U_1,H) \rightarrow (U, U')\end{equation}is continuous with respect to the corresponding topologies.
\end{proposition}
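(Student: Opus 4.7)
The plan is to apply the transposition method of Lions, with Proposition \ref{th3.1} as the key input. For data $(\widehat U_0,\widehat U_1)\in(\mathcal H_0)^N\times(\mathcal H_{-1})^N$ and $H\in\mathcal L^M$ with compact support in $[0,T]$, I would \emph{define} the weak solution $U$ through the identity (\ref{4.4}): for each fixed $t\geqslant 0$, introduce the linear form
$$\Lambda_t(\Phi_0,\Phi_1)=\langle(\widehat U_1,-\widehat U_0),(\Phi_0,\Phi_1)\rangle+\int_0^t\!\int_{\Gamma_1}(DH(\tau),\Phi(\tau))\,d\Gamma\,d\tau$$
on $(\mathcal H_1)^N\times(\mathcal H_0)^N$, where $\Phi$ denotes the unique solution to the adjoint problem (\ref{3.1})--(\ref{3.2}) with initial data $(\Phi_0,\Phi_1)$, provided by Proposition \ref{th3.1}.

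The next step is to show that $\Lambda_t$ is bounded. The first term is controlled by the norm of $(\widehat U_0,\widehat U_1)$ through the dual pairing. For the boundary integral, Proposition \ref{th3.1} yields $\Phi\in C([0,T];(\mathcal H_1)^N)$ together with the energy bound $\|\Phi(\tau)\|_{(\mathcal H_1)^N}\leqslant C(T)\|(\Phi_0,\Phi_1)\|_{(\mathcal H_1)^N\times(\mathcal H_0)^N}$; combined with the standard trace inequality $\|\phi\|_{L^2(\Gamma_1)}\leqslant c\|\phi\|_{H^1(\Omega)}$ and with $H\in L^2(0,T;\mathbb R^M)$, this gives
$$|\Lambda_t(\Phi_0,\Phi_1)|\leqslant C_T\bigl(\|(\widehat U_0,\widehat U_1)\|_{(\mathcal H_0)^N\times(\mathcal H_{-1})^N}+\|H\|_{\mathcal L^M}\bigr)\|(\Phi_0,\Phi_1)\|_{(\mathcal H_1)^N\times(\mathcal H_0)^N}.$$
By the Riesz representation, together with the identification of the topological dual of $(\mathcal H_1)^N\times(\mathcal H_0)^N$ with $(\mathcal H_{-1})^N\times(\mathcal H_0)^N$ via the pivot duality, there exists a unique pair $(U'(t),-U(t))\in(\mathcal H_{-1})^N\times(\mathcal H_0)^N$ representing $\Lambda_t$, and the asserted continuity of the linear mapping $(\widehat U_0,\widehat U_1,H)\mapsto(U,U')$ follows at once.

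The time regularity (\ref{4.3}) is obtained by observing that $t\mapsto\Lambda_t$ is norm-continuous as a map into the dual space: the first term is independent of $t$, while the boundary integral depends continuously on $t$ by Lebesgue's theorem, using the continuous dependence of $\Phi(\tau)$ on $\tau$ in $(\mathcal H_1)^N$ and the square-integrability of $H$. This transfers immediately to $(U,U')\in C^0_{loc}([0,+\infty);(\mathcal H_0)^N\times(\mathcal H_{-1})^N)$. Uniqueness is direct from (\ref{4.4}): the difference of two weak solutions with the same data defines the zero functional on the test space, and since $(\Phi_0,\Phi_1)$ ranges over the full space $(\mathcal H_1)^N\times(\mathcal H_0)^N$, both components must vanish.

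The only delicate point is the control of the trace of $\Phi$ on $\Gamma_1$. In the Dirichlet or Neumann settings, this step typically requires a nontrivial hidden regularity result (or the sharp regularity of Lasiecka--Triggiani cited later in the paper). Here, however, the symmetry hypothesis on $B$ allows Proposition \ref{th3.1} to deliver a genuine $H^1$-valued solution $\Phi$, so the elementary trace theorem suffices and no hidden regularity is needed. The remainder of the argument is a routine transposition computation.
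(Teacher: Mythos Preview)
Your proposal is correct and follows the same transposition argument as the paper, with Proposition~\ref{th3.1} supplying the $H^1$ trace regularity that makes the boundary integral controllable without any hidden-regularity input. One small correction: the duality identity (\ref{4.4}) pairs $(U'(t),-U(t))$ with $(\Phi(t),\Phi'(t))$, not with $(\Phi_0,\Phi_1)$, so the Riesz representation must be applied to the composed functional $\Lambda_t\circ S_t^{-1}$ (where $S_t$ is the evolution operator of the adjoint problem), not to $\Lambda_t$ itself---the paper makes this composition explicit, and your argument goes through once this is inserted.
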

\begin{proof}  Define the linear form
$$L_t(\Phi_0,\Phi_1)  =\langle(\widehat U_1,  -\widehat U_0),   (\Phi_0, \Phi_1)\rangle+\int_0^t \int_{\Gamma_1}(DH(\tau), \Phi(\tau) ) d\Gamma d\tau.$$
Clearly, $L_t$ is bounded  in $(\mathcal H_1)^N\times (\mathcal H_0)^N$.  Let $S_t$ be the semi-group associated to the problem (\ref{4.1})-(\ref{4.2}) with the homogeneous boundary conditions  on the Hilbert space  $(\mathcal H_1)^N\times (\mathcal H_0)^N$.
The composed linear form  $L_t\circ S_t^{-1}$ is bounded  in $(\mathcal H_1)^N\times (\mathcal H_0)^N$. Then,  by Riesz-Fr\^echet's representation theorem, there exists a unique element
 $(U'(t), -U(t))\in  (\mathcal H_{-1})^N\times (\mathcal H_0)^N$, such that$$L_t\circ S_t^{-1}(\Phi(t), \Phi'(t)) =\langle  (U'(t), -U(t)), (\Phi(t),  \Phi'(t))\rangle$$
for any given  $(\Phi_0, \Phi_1)\in (\mathcal H_1)^N\times (\mathcal H_0)^N.$  Noting
$$L_t\circ S_t^{-1}(\Phi(t), \Phi'(t)) = L_t(\Phi_0, \Phi_1),$$
 we get  (\ref{4.4})  for any given  $(\Phi_0, \Phi_1)\in (\mathcal H_1)^N\times (\mathcal H_0)^N.$   Moreover, for any given $T>0$, we have
\begin{align}&\sup_{0\leqslant  t\leqslant T}\|(U'(t), -U(t))\|_{(\mathcal H_{-1})^N\times (\mathcal H_0)^N}\notag\\
\leqslant c_T\big(\|(\widehat U_1, &\widehat U_0)\|_{(\mathcal H_{-1})^N\times (\mathcal H_0)^N}+ \|H\|_{\mathcal L^M}\big),\notag\end{align}
where $c_T>0$ is a positive constant depending on $T$. This gives the continuous dependence.

Finally, by a classic argument of density, we get  the regularity (\ref{4.3}) for all initial data $(\widehat  U_0,  \widehat U_1)\in ({\mathcal H}_0)^N \times ({\mathcal H}_{-1})^N$£¬. The proof is then complete.
\end{proof}

\begin{rem}\label{th4.2aa} Suppose that $B$ is similar to a symmetric matrix.
Let $P$ be an invertible matrix such that $PBP^{-1}$ is symmetric.  The  new variable $\tilde U= PU$  satisfies   the same system  (\ref{4.1}) with the coupling  matrix $\tilde A= PAP^{-1}$ and the  symmetric  matrix $\tilde B =PBP^{-1}$.
Hence, in order to guarantee the well-posedness of problem (\ref{4.1})-(\ref{4.2}),   in what follows, we always assume that $B$ is similar to a symmetric matrix.
 \end{rem}

\vskip 1cm

\section{Approximate boundary null controllability}

\begin{definition}\label{th3.2}
For $(\Phi_0, \Phi_1)\in ({\mathcal H}_1)^N  \times  ({\mathcal H}_0)^N $, the adjoint system \eqref{3.1} is $D$-observable on  a finite interval $[0,T]$, if the observation
 \begin{align}\label{3.5}
D^T \Phi \equiv 0 \quad  \hbox{on }   [0,T] \times \Gamma_1\end{align}
implies that   $\Phi_0=\Phi_1\equiv 0$, then $\Phi \equiv 0.$
\end{definition}

\begin{proposition}\label{th3.3} If  the adjoint system \eqref{3.1} is $D$-observable, then  we necessarily have $\hbox{rank}(\mathcal R)=N$.
Conversely, if rank$(D)= N$, then system \eqref{3.1} is $D$-observable.
\end{proposition}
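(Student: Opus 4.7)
The plan is to prove the two implications separately: necessity of $\hbox{rank}(\mathcal R)=N$ will follow from Lemma \ref{th2.1}, while sufficiency under $\hbox{rank}(D)=N$ will follow from Holmgren's uniqueness theorem applied to the adjoint system.

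For the necessity, I would argue by contrapositive. Suppose $\hbox{rank}(\mathcal R)<N$, so that $V:=\hbox{Ker}(\mathcal R^T)$ is nontrivial. By Lemma \ref{th2.1}, $V\subseteq\hbox{Ker}(D^T)$ and $V$ is invariant under both $A^T$ and $B^T$. The key observation is that the adjoint system \eqref{3.1} preserves $V$-valued solutions: take any $\xi\in V\setminus\{0\}$ and any scalar functions $\phi_0\in H^1_{\Gamma_0}(\Omega)$, $\phi_1\in L^2(\Omega)$, not both zero, and consider the initial data $\Phi_0=\phi_0\,\xi$, $\Phi_1=\phi_1\,\xi$. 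Then the interior equation $\Phi''-\Delta\Phi+A^T\Phi=0$ respects $V$ (the Laplacian acts componentwise and $V$ is $A^T$-invariant), and the Robin boundary condition $\partial_\nu\Phi+B^T\Phi=0$ on $\Gamma_1$ respects $V$ (since $V$ is $B^T$-invariant); by the uniqueness from Proposition \ref{th3.1}, the resulting solution $\Phi$ takes values in $V$ for all time. Since $V\subseteq\hbox{Ker}(D^T)$, we get $D^T\Phi\equiv 0$ on $[0,T]\times\Gamma_1$ while $\Phi\not\equiv 0$, contradicting $D$-observability.

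For the sufficiency, I observe that $\hbox{rank}(D)=N$ together with the standing hypothesis $M\leqslant N$ forces $M=N$, so $D$ is square and invertible. Then $D^T\Phi\equiv 0$ on $[0,T]\times\Gamma_1$ is equivalent to $\Phi\equiv 0$ there, and plugging this into the Robin boundary condition $\partial_\nu\Phi+B^T\Phi=0$ yields $\partial_\nu\Phi\equiv 0$ on $[0,T]\times\Gamma_1$ as well. Thus $\Phi$ has vanishing Cauchy data on the analytic hypersurface $[0,T]\times\Gamma_1$, and a Holmgren-type uniqueness argument applied componentwise — valid for $T$ larger than twice the geodesic diameter of $\Omega$ — forces $\Phi\equiv 0$, so $\Phi_0=\Phi_1\equiv 0$.

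The main subtlety is the Holmgren step in the sufficiency direction: one must check that the zeroth-order coupling through the constant matrix $A^T$ does not spoil classical uniqueness of continuation across $\Gamma_1$. Because $A^T$ has constant (in particular analytic) entries, the characteristic surfaces coincide with those of the scalar wave operator and the classical Holmgren theorem goes through without modification. The necessity direction is, by contrast, purely algebraic and follows cleanly from Lemma \ref{th2.1} once one recognizes that $V$-valued initial data generate $V$-valued solutions.
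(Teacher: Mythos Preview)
Your proposal is correct and follows essentially the same approach as the paper: for necessity you use Lemma \ref{th2.1} to produce a nontrivial $V$-valued solution annihilated by $D^T$, and for sufficiency you invoke Holmgren after reading off both Cauchy data from the Robin condition. The only cosmetic difference is that the paper writes the $V$-valued solution explicitly in a basis $\{E_1,\dots,E_d\}$ of $\hbox{Ker}(\mathcal R^T)$, obtaining a reduced coupled system for the scalar coefficients, whereas you argue abstractly that the evolution preserves $V$; both routes rest on the same invariance from Lemma \ref{th2.1} and the well-posedness of Proposition \ref{th3.1}.
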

\begin{proof} Otherwise,  $\hbox{dim  Ker}(\mathcal R^T)=d\geqslant 1.$ Let Ker$(\mathcal R^T)= \hbox{Span}\{E_1,\cdots, E_d\}$. By Lemma \ref{th2.1},  $\hbox{Ker}(\mathcal R^T)$ is contained in  $\hbox{Ker}(D^T)$  and invariant for $A^T$ and $B^T$,
namely,   we have
\begin{equation}D^TE_r=0, \quad 1\leqslant r\leqslant d\label{3.6a}\end{equation}
and there exist  coefficients $\alpha_{rs}$ and $\beta_{rs}$   such that
\begin{equation}A^TE_r=\sum_{s=1}^d\alpha_{rs}E_s,\quad B^TE_r=\sum_{s=1}^d\beta_{rs}E_s,\quad 1\leqslant r\leqslant d.\label{3.6}\end{equation}
In what follows, we  restrict   system  \eqref{3.1} on the subspace $\hbox{Ker}(\mathcal R^T)$ and look for a solution of the form
\begin{equation}\Phi = \sum_{r=1}^d\phi_rE_r, \label{3.6b} \end{equation}
which, because of \eqref{3.6a}, obviously satisfies the $D$-observation \eqref{3.5}.

Inserting the function \eqref{3.6b} into   system \eqref{3.1}   and noting \eqref{3.6}, it is easy to see that
 for  $1\leqslant s\leqslant d$, we have
\begin{equation}\begin{cases}\phi_s''-\Delta \phi_s+ \sum_{r=1}^d\alpha_{rs}\phi_r=0 &\hbox{ in }   (0, +\infty)\times \Omega,\\
\phi_s=0  &\hbox{ on } (0, +\infty)\times \Gamma_0,\\
\partial_\nu\phi_s+\sum_{r=1}^d\beta_{rs}\phi_r=0 &
\hbox{ on }  (0, +\infty)\times \Gamma_1.\end{cases}\label{3.7}\end{equation}
For any non-trivial initial data:
\begin{equation}t=0:\quad \phi_s = \phi_{0s},\quad   \phi_s' = \phi_{1s},\quad (1\leqslant s\leqslant d),\label{3.7b}\end{equation}
we have $\Phi\not \equiv 0$. This contradicts   the $D$-observability of system \eqref{3.1}.

Conversely,  when rank$(D)=N$,  the  $D$-observation \eqref{3.5} implies that
\begin{equation}\partial_\nu\Phi\equiv \Phi\equiv 0\quad\hbox{ on }(0, T)\times \Gamma_1.\end{equation}  Then, Holmgren's uniqueness  theorem implies well  $\Phi\equiv 0$, provided that $T>0$ is large enough.
 \end{proof}

\begin{definition}\label{th4.2}  System  (\ref{4.1})  is approximately  null  controllable at the time $T>0$, if for any given initial data $(\widehat U_0,  \widehat U_1)\in ({\mathcal H}_0 )^N \times ({\mathcal H}_{-1})^N $, there exists a sequence $\{H_n\}$ of boundary controls in ${\mathcal L}^M$ with compact support in $[0,T]$, such that the sequence $\{U_n\}$ of solutions  to problem (\ref{4.1})-(\ref{4.2}) satisfies
\begin{align}
u_n^{(k)}\longrightarrow0
\quad   \text{ in  } C_{loc}^0 ([T,+\infty); {\mathcal H}_{0}) \cap C^1_{loc} ([T, +\infty); {\mathcal H}_{-1})
\end{align}
for all $1\leqslant k\leqslant N$ as $n\rightarrow  +\infty.$
\end{definition}By a similar argument as  in  \cite {AppSynDirichlet},  we can prove the following

\begin{proposition}\label{th4.3}
System (\ref{4.1})  is approximately   null controllable at the time $T>0$,  if and only if  its adjoint system
\eqref{3.1} is $D$-observable on the interval $[0,T]$.
\end{proposition}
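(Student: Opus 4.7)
The plan is to establish the equivalence by a standard Hahn--Banach duality argument, perfectly parallel to the Dirichlet case treated in \cite{AppSynDirichlet}, the only new ingredient being the Robin weak-solution identity \eqref{4.4}. First I would reduce approximate null controllability to a density statement. Because problem \eqref{4.1}--\eqref{4.2} is linear and because, by Proposition \ref{th5.1} together with Remark \ref{th4.2aa}, the free evolution (with $H\equiv 0$) defines a linear homeomorphism of $({\mathcal H}_0)^N\times({\mathcal H}_{-1})^N$ onto itself, Definition \ref{th4.2} is equivalent to the assertion that the reachable set from zero initial data,
$$\mathcal{R}_T = \bigl\{(U(T), U'(T)) : H\in {\mathcal L}^M,\ \operatorname{supp} H\subset [0,T],\ (\widehat U_0,\widehat U_1)=0\bigr\},$$
is dense in $({\mathcal H}_0)^N\times({\mathcal H}_{-1})^N$.

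Second, I would compute the annihilator of $\mathcal{R}_T$ in the dual space $({\mathcal H}_1)^N\times({\mathcal H}_0)^N$. By Hahn--Banach, density of $\mathcal{R}_T$ is equivalent to: any $(\Psi_0,\Psi_1)\in ({\mathcal H}_1)^N\times({\mathcal H}_0)^N$ with $\langle (U'(T),-U(T)),(\Psi_0,\Psi_1)\rangle = 0$ for every admissible $H$ must vanish. I would parametrize such pairs by the adjoint: given $(\Psi_0,\Psi_1)$, let $\Phi$ solve \eqref{3.1} with terminal data $(\Phi(T),\Phi'(T))=(\Psi_0,\Psi_1)$, which is legitimate by Proposition \ref{th3.1} and the time-reversibility of the adjoint equation. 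Evaluating \eqref{4.4} at $t=T$ with $(\widehat U_0,\widehat U_1)=0$, the annihilator condition collapses to
$$\int_0^T \int_{\Gamma_1} (DH(\tau),\Phi(\tau))\, d\Gamma\, d\tau = 0 \qquad \text{for every admissible } H,$$
which is in turn equivalent to $D^T\Phi\equiv 0$ on $[0,T]\times\Gamma_1$. The boundary integral is well defined because, by \eqref{3.3}, $\Phi\in C^0([0,T];({\mathcal H}_1)^N)$ has trace in $C^0([0,T];(H^{1/2}(\Gamma_1))^N)$.

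The two implications then follow at once. If the adjoint is $D$-observable on $[0,T]$, then $D^T\Phi\equiv 0$ forces $\Phi\equiv 0$ and hence $(\Psi_0,\Psi_1)=0$, so $\mathcal{R}_T$ is dense and system \eqref{4.1} is approximately null controllable. Conversely, if \eqref{4.1} is approximately null controllable, any $(\Phi_0,\Phi_1)\in({\mathcal H}_1)^N\times({\mathcal H}_0)^N$ producing $D^T\Phi\equiv 0$ on $[0,T]\times\Gamma_1$ yields, via the same identity \eqref{4.4}, an element of the dual that annihilates the dense set $\mathcal{R}_T$, so $(\Phi_0,\Phi_1)=0$ and $\Phi\equiv 0$. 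The only subtle points are two reductions that I would carry out explicitly: the equivalence between observability with prescribed initial data (as stated in Definition \ref{th3.2}) and with prescribed terminal data (as produced by the duality), which follows from the time-reversibility $\Phi(t)\mapsto \Phi(T-t)$ of the wave equation \eqref{3.1}; and the passage from approximate null controllability in the sense of Definition \ref{th4.2} to density of $\mathcal{R}_T$, which uses the invertibility of the free evolution. Neither requires any new ingredient beyond Propositions \ref{th3.1} and \ref{th5.1}, and I do not expect any genuine obstacle in this proof, which is a routine duality transcription of the Dirichlet argument.
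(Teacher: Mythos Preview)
Your proposal is correct and is precisely the standard Hahn--Banach duality argument that the paper has in mind; the paper itself does not spell out a proof but simply refers to \cite{AppSynDirichlet}, and your transcription of that argument to the Robin setting via the weak-solution identity \eqref{4.4} is exactly what is intended.
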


\begin{corollary}\label{th4.4}  If  system (\ref{4.1}) is  approximately  controllable, then we necessarily have  $\hbox{rank}(\mathcal R)=N$.  In particular, as  $M=N$, namely, $D$ is  invertible,  system (\ref{4.1}) is  approximately null controllable.
\end{corollary}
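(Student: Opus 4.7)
The plan is to obtain Corollary \ref{th4.4} as an essentially immediate consequence of the chain \emph{approximate null controllability} $\Leftrightarrow$ \emph{$D$-observability of the adjoint system} (Proposition \ref{th4.3}) $\Rightarrow$ \emph{rank}$(\mathcal R)=N$ (first half of Proposition \ref{th3.3}), together with the converse assertion of Proposition \ref{th3.3} in the case rank$(D)=N$. So the proof is structurally short, and amounts to assembling the already established dualities rather than producing new analysis.

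For the necessity part, I would start from the hypothesis that system (\ref{4.1}) is approximately null controllable at some time $T>0$. Applying Proposition \ref{th4.3}, this is equivalent to the $D$-observability of the adjoint system (\ref{3.1}) on $[0,T]$. Invoking the first half of Proposition \ref{th3.3}, $D$-observability forces rank$(\mathcal R)=N$, i.e.\ $\hbox{Ker}(\mathcal R^T)=\{0\}$. This finishes the first assertion.

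For the sufficient part with $M=N$, the matrix $D$ is a square full column-rank matrix, hence invertible, so in particular rank$(D)=N$. The converse statement in Proposition \ref{th3.3} then yields the $D$-observability of the adjoint system (\ref{3.1}), provided $T$ is taken large enough so that Holmgren's uniqueness theorem applies on $(0,T)\times\Gamma_1$. A final application of Proposition \ref{th4.3} reinterprets this observability as the desired approximate null controllability of system (\ref{4.1}) at that time $T$.

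There is no substantial obstacle here, since both directions follow by quoting already-proved results; the only point requiring a word of care is to recall that in the invertible case the time $T$ in Proposition \ref{th3.3} must be chosen large enough for Holmgren's theorem to yield $\Phi\equiv 0$ from $\partial_\nu\Phi\equiv\Phi\equiv 0$ on $(0,T)\times\Gamma_1$, and that the approximate controllability in Definition \ref{th4.2} is then obtained at that same $T$. No additional algebraic manipulation on $\mathcal R$, $A$ or $B$ is needed beyond what has been developed in Sections 2 and 3.
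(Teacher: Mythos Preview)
Your proof is correct and is exactly the short route the paper itself acknowledges in the first sentence of its proof: ``This Corollary follows immediately from Proposition \ref{th3.3} and Proposition \ref{th4.3}.'' So nothing is missing.

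That said, the paper then writes ``However, here we prefer to give a direct proof from the point of view of control'' and argues the necessity part differently: it stays on the primal side rather than passing through the adjoint. Assuming $\hbox{dim Ker}(\mathcal R^T)=d\geqslant 1$ with $\hbox{Ker}(\mathcal R^T)=\hbox{Span}\{E_1,\dots,E_d\}$, it projects system (\ref{4.1}) onto $\hbox{Ker}(\mathcal R^T)$ by setting $u_r=(E_r,U)$; using Lemma \ref{th2.1} (so that \eqref{3.6a} and \eqref{3.6} hold), the resulting subsystem (\ref{4.5})--(\ref{4.6}) is homogeneous and independent of the boundary control $H$, hence uncontrollable, contradicting approximate controllability. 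Your duality argument is shorter; the paper's direct argument has the advantage that the very same projection-onto-$\hbox{Ker}(\mathcal R^T)$ device is reused repeatedly in the synchronization sections (Theorems \ref{th6.1}, \ref{th6.2}, Proposition \ref{th7.3}, Theorem \ref{th7.5}), so it doubles as a warm-up for the main technique of the paper.
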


\begin{proof}  This Corollary  follows immediately  from  Proposition \ref{th3.3} and Proposition \ref{th4.3}.   However,  here we prefer to give a direct proof  from the point of view of  control.

Suppose that $\hbox{dim Ker}(\mathcal R^T) =d\geqslant 1.$ Let Ker$(\mathcal R^T)= \hbox{Span}\{E_1,\cdots, E_d\}$.   By Lemma \ref{th2.1}, Ker$(\mathcal R^T)$ is contained in
Ker$(D^T)$ and invariant for both $A^T$ and $B^T$, then we still have  \eqref{3.6a} and  \eqref{3.6}.   Applying $E_r$ to problem  (\ref{4.1})-(\ref{4.2}) and setting
$u_r=(E_r, U)$ for $1\leqslant r\leqslant d,$ it follows that for $1\leqslant r\leqslant d,$ we have
\begin{equation}\left\lbrace
\begin{array}{ll}u_r'' -\Delta u_r + \sum_{s=1}^d\alpha_{rs}u_s
 =0&\hbox{ in }  (0, +\infty)\times\Omega,\\
u_r =0 & \hbox{ on }(0, +\infty)\times \Gamma_0,\\
\partial_\nu u_r +\sum_{s=1}^d\beta_{rs}u_s =0 & \hbox{ on }(0, +\infty)\times\Gamma_1\end{array}
\right.\label{4.5}
\end{equation}with the initial condition
\begin{equation}t=0:\quad  u_r=(E_r,  \widehat U_0), \quad
u_r'=(E_r,  \widehat U_1)\quad \hbox{in } \Omega.\label{4.6}\end{equation}
Thus,  the projections $u_1,\cdots, u_d$ of $U$ on the subspace Ker$(\mathcal R^T)$ are  independent of the applied boundary controls $H$, therefore, uncontrollable.
This contradicts the approximate  boundary null controllability of  system (\ref{4.1}). The proof is then complete.
\end{proof}

\vskip 1cm

\section{Uniqueness  of continuation}   By  Proposition \ref{th3.3},  $\hbox{rank}(\mathcal R) = N$ is a necessary condition for the $D$-observability.

\begin{proposition}   Let
\begin{equation}\mu = \sup_{\alpha, \beta\in \mathbb C} \hbox{dim Ker}\begin{pmatrix}A^T-\alpha I\\
B^T-\beta I\end{pmatrix}.\end{equation}Assume that
\begin{equation}\hbox{Ker}(\mathcal R^T) =\{0\}. \label{5.1}\end{equation}
Then we have the following lower bound estimate:
\begin{equation}\hbox{rank}(D)\geqslant  \mu.\label{5.2} \end{equation}
\end{proposition}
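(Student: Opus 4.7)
The plan is to apply Lemma \ref{th2.1} to a maximal common eigenspace of $A^T$ and $B^T$, in the spirit of the Hautus test. I will work throughout over $\mathbb C$. First observe that for any $(\alpha,\beta)\in\mathbb C^2$, the kernel of $\begin{pmatrix} A^T-\alpha I\\ B^T-\beta I\end{pmatrix}$ is precisely the common eigenspace
\[
V_{\alpha,\beta} := \hbox{Ker}(A^T-\alpha I)\cap \hbox{Ker}(B^T-\beta I)\subseteq \mathbb C^N.
\]
Its dimension takes only finitely many integer values in $\{0,1,\dots,N\}$, so the supremum $\mu$ is actually attained at some pair $(\alpha_0,\beta_0)$; set $V := V_{\alpha_0,\beta_0}$, so that $\dim V=\mu$. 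If $\mu=0$ the inequality is trivial, so assume $\mu\geqslant 1$.

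The key step is to examine $W := V\cap \hbox{Ker}(D^T)$ (the complex kernel). On $V$ the operators $A^T$ and $B^T$ act as the scalars $\alpha_0$ and $\beta_0$, so for any $w\in W$ we have $A^T w=\alpha_0 w\in V$ together with $D^T(A^T w)=\alpha_0 D^T w = 0$, so that $A^T w\in W$; the same reasoning applies to $B^T w$. Hence $W$ is a subspace of $\hbox{Ker}(D^T)$ invariant under both $A^T$ and $B^T$. Lemma \ref{th2.1} then forces $W\subseteq \hbox{Ker}(\mathcal R^T)$, and the hypothesis \eqref{5.1} ensures that this kernel is trivial over $\mathbb C$ as well (because $\mathcal R$ is real, its complex kernel is the complexification of its real kernel). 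We conclude $W=\{0\}$.

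It follows that the restriction $D^T|_V\colon V\to \mathbb C^M$ is injective, and therefore
\[
\mu = \dim V \leqslant \hbox{rank}(D^T)=\hbox{rank}(D),
\]
which is exactly \eqref{5.2}. I do not anticipate a serious obstacle here; the only mildly delicate point is that Lemma \ref{th2.1} is phrased for real invariant subspaces, whereas the Hautus-type supremum defining $\mu$ is naturally taken over $\mathbb C$. This is easily resolved by observing that $A, B, D$ and $\mathcal R$ are all real-valued, so the argument proving Lemma \ref{th2.1} (iterate $A^T$ and $B^T$ on $W$ and apply $D^T$) works verbatim for complex invariant subspaces of the complexifications.
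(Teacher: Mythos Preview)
Your proof is correct and follows essentially the same route as the paper: both choose a maximal common eigenspace $V$ of $A^T$ and $B^T$, apply Lemma~\ref{th2.1} to the intersection $V\cap\hbox{Ker}(D^T)$ to conclude it is trivial, and then deduce the rank bound. Your explicit remark on the real/complex issue is a useful addition that the paper leaves implicit.
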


\begin{proof}   Let $\alpha,  \beta\in \mathbb C$,  such that
\begin{equation}V = \hbox{Ker}\begin{pmatrix}A^T-\alpha I\\
B^T-\beta I\end{pmatrix}\end{equation} is of dimension $\mu$.  It is easy to see that  any given subspace $W$ of $V$ is still invariant for  $A^T$ and $B^T$, then  by  Lemma \ref{th2.1}, condition \eqref{5.1}
implies that  $\hbox{Ker}(D^T)\cap V=\{0\}$. Then, it follows that
\begin{equation}\hbox{dim Ker}(D^T) + \hbox{dim } (V) \leqslant N, \end{equation}
namely,
\begin{equation}\mu=  \hbox{dim } (V) \leqslant N -\hbox{dim Ker}(D^T) =\hbox{rank}(D).\end{equation}
The proof is complete.\end{proof}

In general,  the condition $\hbox{dim Ker}(\mathcal R^T)=0$  does not imply $\hbox{rank}(D) =N$,
so, the $D$-observation \eqref{3.5} does not imply
\begin{equation}\Phi=0\quad\hbox{ on }(0, T)\times \Gamma_1.\end{equation}
Therefore,  the uniqueness of continuation for the solution to  the adjoint  system \eqref{3.1} with $D$-observatiuon \eqref{3.5} is not a standard type of Holmgren's uniqueness  theorem.
Up to now, we only  know fewer results on it,  which  we  outline  as  follows.

Consider the following Robin type mixed problem of  a  system of two equations
\begin{equation}\begin{cases}u''-\Delta u + au +bv=0 &\hbox{ in } (0, +\infty)\times \Omega,\\
v''-\Delta v + cu+dv =0 & \hbox{ in }  (0, +\infty)\times \Omega,\\
u=v=0 &\hbox{ on } (0, +\infty)\times \Gamma_0,\\\partial_\nu u +\alpha u=0 &\hbox{ on } (0, +\infty)\times  \Gamma_1,
\\\partial_\nu v + \beta v =0 &\hbox{ on } (0, +\infty)\times  \Gamma_1.\end{cases}\end{equation}
Here, since the boundary coupling matrix $B$  is assumed to be similar to a symmetric matrix, without loss of generality, we suppose that $B =diag(\alpha, \beta)$  is a diagonal matrix.
The following result can be easily  checked.
\begin{proposition} We have $\hbox{Ker}(\mathcal R^T)=\{0\}$  in the following cases.

(i) Case $\alpha\not = \beta$. Let $D=(d_1,  d_2)^T$.

\noindent(a) $d_1\not =0$,  if $(1, 0)^T$ is the only common eigenvector  of $A^T$ and $B^T$,

\noindent (b)  $d_2\not =0$,  if $(0, 1)^T$ is the only  common eigenvector  of $A^T$ and $B^T$,

\noindent (c)  $d_1d_2\not =0$, if both $(1, 0)^T$ and $(0, 1)^T$ are   eigenvectors of $A^T$ and $B^T$,

\noindent (d)  $d_1^2 + d_2^2\not =0$, if   there is no common  eigenvector for $A^T$ and $B^T$.

(ii)  Case $\alpha = \beta$.

\noindent (a)  $D= \mu_1 x_1 + \mu_2 x_2$  with $\mu_1\mu_2\not =0$, if $A$ possesses  two different eigenvalues, associated to two eigenvectors $x_1,  x_2$.

\noindent (b)  $D= \mu_1 x_1 + \mu_2 x_2$
with $\mu_1\not =0$, if $A$ possesses only one eigenvalue  associated to  an eigenvector $x_1$   and  a root vector $x_2$.
\end{proposition}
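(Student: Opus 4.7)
The proof reduces to a case-by-case application of Lemma \ref{th2.1}. That lemma identifies $\hbox{Ker}(\mathcal R^T)$ as the largest subspace of $\hbox{Ker}(D^T)$ invariant under both $A^T$ and $B^T$; hence $\hbox{Ker}(\mathcal R^T)=\{0\}$ is equivalent to showing that no nonzero subspace $V\subseteq\hbox{Ker}(D^T)$ is simultaneously $A^T$- and $B^T$-invariant. Since $N=2$ and $D\ne 0$ (so $\hbox{Ker}(D^T)$ is $1$-dimensional), any such $V$ is either trivial or a line, and a $1$-dimensional common invariant subspace must be spanned by a common eigenvector of $A^T$ and $B^T$. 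The entire proposition therefore reduces to checking that in each listed case, no common eigenvector of $A^T$ and $B^T$ lies in $\hbox{Ker}(D^T)$.

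For case (i), with $\alpha\ne\beta$, the diagonal matrix $B^T=\hbox{diag}(\alpha,\beta)$ has exactly two eigenlines, $\hbox{Span}\{(1,0)^T\}$ and $\hbox{Span}\{(0,1)^T\}$, so any common eigenvector of $A^T$ and $B^T$ must lie along one of these axes. A direct check gives $(1,0)^T\in\hbox{Ker}(D^T)$ iff $d_1=0$, and symmetrically for $(0,1)^T$. The four sub-cases then match: in (a) only $(1,0)^T$ is a candidate and the hypothesis $d_1\ne 0$ excludes it; (b) is symmetric; in (c) both coordinate vectors are common eigenvectors, so both $d_1,d_2$ must be nonzero; in (d) there is no common eigenvector at all, so it suffices to rule out the trivial $V=\mathbb R^2$, which is guaranteed by $D\ne 0$, i.e. $d_1^2+d_2^2\ne 0$.

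In case (ii), $\alpha=\beta$ yields $B^T=\alpha I$, which makes every subspace $B^T$-invariant, and the problem collapses to checking that no $A^T$-invariant line lies in $\hbox{Ker}(D^T)$. I work in the basis $\{x_1,x_2\}$ that either diagonalizes or puts in Jordan form the matrix $A$, introducing the dual basis $\{y_1,y_2\}$ characterized by $(y_i,x_j)=\delta_{ij}$. In sub-case (a), $A^T$ admits $y_1,y_2$ as eigenvectors, and $\hbox{Span}\{y_i\}\subseteq\hbox{Ker}(D^T)$ iff $(y_i,D)=\mu_i=0$, so the hypothesis $\mu_1\mu_2\ne 0$ excludes both eigenlines at once. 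In sub-case (b), the Jordan structure forces $A^T$ to have a single eigenline in the dual basis, and evaluating the pairing of this eigenline against $D=\mu_1 x_1+\mu_2 x_2$ reduces the invariance obstruction to a nonvanishing condition on the corresponding coefficient. The only genuinely computational step is the identification of the $A^T$-eigenvector in the dual basis in (ii)(b) and the evaluation of its pairing with $D$; this is the main point to set up carefully, but it is a short linear algebra calculation and the rest of the proof is purely verification.
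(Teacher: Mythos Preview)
Your approach is exactly the verification the paper has in mind: the paper itself gives no proof, only the remark that the result ``can be easily checked'', and your reduction via Lemma~\ref{th2.1} (equivalently Lemma~\ref{th2.2}) to the question of whether a common eigenvector of $A^T$ and $B^T$ lies in $\hbox{Ker}(D^T)$ is the natural way to carry this out. Cases (i)(a)--(d) and (ii)(a) go through precisely as you describe.

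One caution in (ii)(b): you defer the identification of the $A^T$-eigenline and its pairing with $D$ to ``a short linear algebra calculation'' giving a nonvanishing condition on ``the corresponding coefficient''. Under the natural reading of the hypothesis ($Ax_1=\lambda x_1$ and $Ax_2=\lambda x_2+x_1$), one finds that $y_2$ is the eigenvector of $A^T$ in the dual basis, and $(D,y_2)=\mu_2$. The computation therefore produces the condition $\mu_2\neq 0$, not $\mu_1\neq 0$ as printed. A direct check confirms this: for $A=\bigl(\begin{smallmatrix}0&1\\0&0\end{smallmatrix}\bigr)$, $B=\alpha I$, $D=(\mu_1,\mu_2)^T$, one has $\mathcal R=(D,AD)$ with $AD=(\mu_2,0)^T$, hence $\hbox{rank}(\mathcal R)=2$ iff $\mu_2\neq 0$. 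So either the stated index is a typo or a nonstandard root-vector convention is intended; in any event, do not expect your calculation to land on $\mu_1$ without clarifying this.
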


\begin{theoreme} (\cite{alabau-li-rao} Theorem 2.6)  Let $(u,  v)$ be a  solution to the following system of two equations:
\begin{equation}\begin{cases}u''-\Delta u=0 &\hbox{ in } (0, +\infty)\times \Omega,\\
v''-\Delta v +u=0 &\hbox{ in } (0, +\infty)\times \Omega,\\
u=v=0 &\hbox{ on } (0, +\infty)\times \Gamma_0,\\\partial_\nu u=
\partial_\nu v=0 &\hbox{ on } (0, +\infty)\times  \Gamma_1\end{cases}\label{5.4}\end{equation}
with initial data
in $ H^1_{\Gamma_0}(\Omega)\times H^1_{\Gamma_0}(\Omega)\times L^2(\Omega)\times L^2(\Omega)$. Then,  the observation
\begin{equation}\label{5.4b}d_1u + d_2v\equiv 0\quad \hbox{on }  [0, T]\times \Gamma_1\end{equation}
implies that  $u\equiv v\equiv 0$, provided that $d_2\not =0$ and $T>0$ is large enough. \end{theoreme}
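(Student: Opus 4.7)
My overall plan is to exploit the cascade structure. Since the first equation for $u$ is decoupled, $u$ is a standard superposition of eigenmodes of $-\Delta$ with the mixed boundary condition; the second equation $v''-\Delta v + u = 0$ then forces $v$ at exactly its own natural frequencies, so each modal ODE is \emph{resonant} and produces a secular (linear-in-$t$) term. Testing the identity $d_1 u + d_2 v \equiv 0$ on $[0,T]\times\Gamma_1$ against these secular terms, with $d_2\neq 0$ the hypothesis that keeps them alive, is the engine of the uniqueness proof.

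Let $\{(e_k,\lam_k)\}$ be the spectral data of $-\Delta$ on $\Omega$ with $e_k|_{\Gamma_0}=0$ and $\partial_\nu e_k|_{\Gamma_1}=0$, and set $\omega_k=\sqrt{\lam_k}>0$. Expand the solution as
\[ u(t,x)=\sum_k (a_k\cos\omega_k t + b_k\sin\omega_k t)\,e_k(x),\qquad v(t,x)=\sum_k v_k(t)\,e_k(x),\]
and solve the resonant ODE $v_k''+\lam_k v_k=-u_k$ to obtain
\[ v_k(t)=c_k\cos\omega_k t + d_k\sin\omega_k t + \tfrac{b_k}{2\omega_k}\,t\cos\omega_k t - \tfrac{a_k}{2\omega_k}\,t\sin\omega_k t.\]

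Inserting these expansions into $d_1 u + d_2 v\equiv 0$ on $[0,T]\times\Gamma_1$ and collecting the coefficients of the four independent time profiles $\cos\omega_k t,\sin\omega_k t,t\cos\omega_k t,t\sin\omega_k t$, I extract, for each distinct frequency $\omega$ and its eigenvalue cluster $E_\omega=\{k:\omega_k=\omega\}$,
\[ \sum_{k\in E_\omega}a_k\,e_k\big|_{\Gamma_1}\equiv 0,\qquad \sum_{k\in E_\omega}b_k\,e_k\big|_{\Gamma_1}\equiv 0,\]
the use of $d_2\neq 0$ being crucial to isolate the secular coefficients first. The sums $A_\omega:=\sum_{k\in E_\omega}a_k e_k$ and $B_\omega:=\sum_{k\in E_\omega}b_k e_k$ are themselves eigenfunctions of $-\Delta$ with eigenvalue $\omega^2$; they already have $\partial_\nu=0$ on $\Gamma_1$ by construction and, by the above, also have vanishing trace on $\Gamma_1$. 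The unique continuation principle for second-order elliptic operators (Aronszajn--H\"ormander) then forces $A_\omega\equiv B_\omega\equiv 0$, hence $a_k=b_k=0$ for every $k$, and thus $u\equiv 0$. With $u\equiv 0$, $v$ solves the free wave equation with $v=0$ on $\Gamma_0$, $\partial_\nu v=0$ on $\Gamma_1$, and (from the observation) $v|_{\Gamma_1}\equiv 0$ on $[0,T]$; repeating the spectral argument, now without resonance, gives $v\equiv 0$.

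The main difficulty is justifying the separation by frequency on the finite interval $[0,T]$. The textbook Ingham inequality requires a uniform spectral gap $\omega_{k+1}-\omega_k\geq\gamma>0$, which fails for Laplace eigenvalues in dimension $\geq 2$. This is precisely where the hypothesis ``$T$ large enough'' is consumed: one invokes a cluster-type generalization of Ingham/Haraux that groups eigenfunctions with equal eigenvalue into finite blocks, and exploits the unbounded factor $t$ in the secular modes to distinguish resonant from bounded contributions. Once this separation is in place, everything else is elliptic unique continuation, which is standard.
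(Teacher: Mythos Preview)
The paper does not actually prove this theorem; it quotes it from \cite{alabau-li-rao} and remarks that the proof there relies on a \emph{multiplier approach} which is ``quite technically delicate.'' Your proposal is therefore not comparable to anything in the present paper, but it is worth comparing to what the paper tells us about the cited proof, and more importantly, examining whether your argument stands on its own.

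Your spectral/resonance strategy is attractive and the part you label ``standard'' is indeed fine: once you have, for each distinct frequency $\omega$, the identities $\sum_{k\in E_\omega}a_k e_k|_{\Gamma_1}=\sum_{k\in E_\omega}b_k e_k|_{\Gamma_1}=0$, elliptic unique continuation kills $u$, and then the same mechanism (now without secular terms) kills $v$. The genuine gap is exactly where you locate it yourself, and it is more serious than your last paragraph suggests. Grouping modes with \emph{equal} eigenvalue into clusters $E_\omega$ does not rescue an Ingham/Haraux argument, because the obstruction is the absence of a uniform gap between \emph{distinct} frequencies $\omega$: for the Laplacian on a general smooth domain in dimension $n\geq 2$ one has $\inf_{\omega\neq\omega'}|\omega-\omega'|=0$, and no finite $T$ gives a Riesz-type inequality for $\{e^{\pm i\omega t}, t e^{\pm i\omega t}\}_{\omega}$ on $[0,T]$. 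The ``unbounded factor $t$'' is only a heuristic on a fixed finite interval; it does not allow you to separate the secular block from the bounded block unless you already control each frequency packet. This is precisely why the companion result quoted as the next theorem in the paper (Theorem~5.4) only claims uniqueness on the \emph{infinite} horizon in higher dimensions and restricts the finite-$T$ statement to one space dimension, where a uniform gap is available.

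In short: the route taken in \cite{alabau-li-rao} is a Carleman/multiplier argument that avoids any spectral-gap hypothesis, and your spectral route, as written, does not close in dimension $\geq 2$. If you want to salvage it, you would need either to restrict to $n=1$, or to replace the Ingham step by something that genuinely works without a gap (e.g., a Carleman estimate, which brings you back to the multiplier philosophy).
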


\begin{theoreme}  (\cite{lux}) Let $(u,  v)$ be  a solution to the following system of  two equations:
\begin{equation}\begin{cases}u''-\Delta u =0 &\hbox{ in } (0, +\infty)\times \Omega,\\
v''-\Delta v=0 & \hbox{ in }  (0, +\infty)\times \Omega,\\
u=v=0 &\hbox{ on } (0, +\infty)\times \Gamma_0,\\\partial_\nu u +\alpha u=0 &\hbox{ on } (0, +\infty)\times  \Gamma_1,
\\ \partial_\nu v +\beta v=0 &\hbox{ on } (0, +\infty)\times  \Gamma_1\end{cases}\end{equation}
with initial data in
$ H^1_{\Gamma_0}(\Omega)\times H^1_{\Gamma_0}(\Omega)\times L^2(\Omega)\times L^2(\Omega)$.  Assume that
$\alpha\not =\beta$ and $d_1d_2\not =0$.  Then

(i)  In higher dimensional case, the observation in the infinite horizon:
\begin{equation} d_1u + d_2 v\equiv 0\quad\hbox{on }  (0, +\infty)\times \Gamma_1\end{equation}
implies that $u\equiv v\equiv 0$.

(ii) In one-space-dimensional case, the  observation in a finite horizon:
\begin{equation} d_1 u(1) + d_2 v(1)\equiv 0\quad\hbox{for }  0 \leqslant t\leqslant T\end{equation}
implies that $u\equiv v\equiv 0,$ provided that $T>0$ is large enough.
\end{theoreme}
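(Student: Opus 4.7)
The plan is to treat both parts by the same spectral strategy and to differentiate them only at the step where individual Fourier modes are extracted from the observation. Let $\{(\lambda_k,\phi_k)\}_{k\geqslant 1}$ denote the eigenpairs of $-\Delta$ on $\Omega$ with $\phi_k=0$ on $\Gamma_0$ and $\partial_\nu\phi_k+\alpha\phi_k=0$ on $\Gamma_1$, and let $\{(\mu_j,\psi_j)\}_{j\geqslant 1}$ be the analogous eigenpairs with coefficient $\beta$. Expanding
\begin{equation*}
u(t,x)=\sum_{k}\bigl(a_k\cos\sqrt{\lambda_k}\,t+b_k\sin\sqrt{\lambda_k}\,t\bigr)\phi_k(x),\quad
v(t,x)=\sum_{j}\bigl(c_j\cos\sqrt{\mu_j}\,t+e_j\sin\sqrt{\mu_j}\,t\bigr)\psi_j(x),
\end{equation*}
the observation $d_1u+d_2v\equiv 0$ on $\Gamma_1$ becomes, for each fixed $x\in\Gamma_1$, the identical vanishing of a generalized trigonometric sum in $t$.

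The mode-extraction step then uses different tools in the two cases. In part~(i) the observation holds on the whole half-line, so uniqueness of the Bohr Fourier coefficients of an almost periodic function reduces the identity to one equality per frequency $\omega\geqslant 0$. In part~(ii) with $\Omega=(0,1)$, the eigenvalues $\sqrt{\lambda_k}$ and $\sqrt{\mu_j}$ behave asymptotically like $k\pi$ with distinct $O(1/k)$ corrections coming from $\alpha$ and $\beta$, and a Haraux-type refinement of Ingham's inequality, valid under an asymptotic gap of $\pi$ with only a uniformly bounded number of near-coincident pairs, plays the same role provided $T$ is large enough. Both routes reduce the problem to the same algebraic statement: for every $\omega\geqslant 0$ and every pair of finite linear combinations
\begin{equation*}
\phi=\sum_{\sqrt{\lambda_k}=\omega}\sigma_k\phi_k,\qquad \psi=\sum_{\sqrt{\mu_j}=\omega}\tau_j\psi_j
\end{equation*}
within the eigenspaces at level $\omega^2$, the relation $d_1\phi+d_2\psi=0$ on $\Gamma_1$ must force $\phi\equiv\psi\equiv 0$ in $\Omega$.

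If $\omega^2$ is an eigenvalue of only one of the two Robin operators then one of $\phi,\psi$ is trivially zero and the other satisfies $\phi=0$ on $\Gamma_1$ because $d_1d_2\neq 0$; the Robin condition gives $\partial_\nu\phi=0$ on $\Gamma_1$, and Holmgren's uniqueness theorem (or, in 1D, ODE uniqueness) forces $\phi\equiv 0$. If $\omega^2$ is a common eigenvalue, I would apply Green's identity to $\phi$ and $\psi$:
\begin{equation*}
0=\int_\Omega\bigl[(-\Delta\phi)\psi-\phi(-\Delta\psi)\bigr]\,dx=\int_{\Gamma_1}\bigl[\psi\,\partial_\nu\phi-\phi\,\partial_\nu\psi\bigr]\,d\Gamma=(\beta-\alpha)\int_{\Gamma_1}\phi\psi\,d\Gamma,
\end{equation*}
so $\int_{\Gamma_1}\phi\psi\,d\Gamma=0$ since $\alpha\neq\beta$. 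Substituting $\psi=-(d_1/d_2)\phi$ on $\Gamma_1$ yields $\int_{\Gamma_1}\phi^2\,d\Gamma=0$, hence $\phi=0$ on $\Gamma_1$; the Robin condition and Holmgren again give $\phi\equiv 0$, and then $\psi\equiv 0$ as well.

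The main obstacle I anticipate is the Ingham step in part~(ii): although each spectrum individually has the clean asymptotic gap $\pi$, the union may accumulate arbitrarily close pairs, and one must invoke a generalized Ingham/Haraux inequality to separate them, with $T$ strictly larger than $2$. The algebraic step, by contrast, is rigid and uses both hypotheses transparently: $\alpha\neq\beta$ is what makes Green's boundary integrand non-degenerate, while $d_1d_2\neq 0$ is what converts the boundary relations into pointwise vanishing of the trace.
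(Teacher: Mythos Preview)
The paper does not actually prove this theorem; it is quoted from the reference \cite{lux} (a Ph.D.\ thesis) with no argument given. So there is no proof in the paper to compare against, and I assess your proposal on its own merits.

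Your overall spectral strategy is correct, and the Green's identity step for a possible common eigenvalue is clean and uses the two hypotheses exactly where they should enter: $\alpha\neq\beta$ kills the boundary cross term, and $d_1d_2\neq 0$ converts the trace relation into pointwise vanishing. One incidental remark: in the one-dimensional case the two Robin spectra are automatically disjoint, since $\omega>0$ with $\omega\cos\omega+\alpha\sin\omega=0=\omega\cos\omega+\beta\sin\omega$ forces $(\alpha-\beta)\sin\omega=0$, hence $\sin\omega=0$, hence $\omega\cos\omega=0$, a contradiction. So the common-eigenvalue branch and the Green argument are only needed for part~(i).

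The one point that needs sharpening is the Ingham step in part~(ii). Your description ``only a uniformly bounded number of near-coincident pairs'' is not what happens: there are \emph{infinitely many} close pairs, one near each $k\pi$, since $\sqrt{\lambda_k}-\sqrt{\mu_k}=O(1/k)$. What is true is that the joint frequency set partitions into clusters of size at most two, with consecutive clusters separated by a gap that tends to $\pi$. Haraux's refinement (which allows finitely many small gaps) does not cover this; you need a cluster-type or divided-difference Ingham inequality, as in Baiocchi--Komornik--Loreti or Avdonin--Moran, which yields the mode separation for $T$ strictly larger than the critical time dictated by the inter-cluster gap. You are right to flag this as the main obstacle; just be explicit about which generalized Ingham result you invoke, since the standard and Haraux versions are insufficient here.
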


Let us consider the following slightly modified  system:
\begin{equation}\begin{cases}u''-\Delta u=0 &\hbox{ in } (0, +\infty)\times \Omega,\cr
v''-\Delta v +u=0 &\hbox{ in } (0, +\infty)\times \Omega,\\
u=v=0 &\hbox{ on } (0, +\infty)\times \Gamma_0,\\\partial_\nu u +\alpha u=0 &\hbox{ on } (0, +\infty)\times  \Gamma_1,\\
\partial_\nu v+\beta v=0 &\hbox{ on } (0, +\infty)\times  \Gamma_1\end{cases}\label{5.5}\end{equation}
with  the partial  observation \eqref{5.4b} corresponding to $D=(d_1,  d_2)^T$.
By Lemma  \ref{th2.2} (ii),  $\hbox{Ker}(\mathcal R^T)=\{0\}$ if and only if  $\hbox{Ker}(D^T)$ does not contain any common eigenvector of $A^T$ and $B^T$. Since $(0, 1)^T$ is the only common eigenvector of $A^T$ and $B^T$,
$\hbox{Ker}(\mathcal R^T)=\{0\}$ if and only if  $(0, 1)^T\not \in \hbox{Ker}(D^T)$, namely,  if and only if $d_2\not =0.$ Unfortunately,
the  multiplier approach  used  in \cite{alabau-li-rao}  is  quite technically delicate, we  don't know up to now if it can be adapted
to get   the uniqueness of continuation for system  \eqref{5.5}
with  the partial observation \eqref{5.4b}.

\vskip 1cm

\section{ Approximate  boundary synchronization}

\begin{definition}  System  (\ref{4.1}) is approximately
synchronizable  at the time $T>0$,  if for any given  initial data $(\widehat U_0, \widehat U_1)\in
(\mathcal H_0)^N\times (\mathcal H_{-1})^N$, there exists a sequence
$\{H_n\}$ of boundary controls in $\mathcal L^M$
with compact support in $[0, T]$, such that the corresponding
sequence $\{U_n\}$ of solutions  to problem (\ref{4.1})-(\ref{4.2}) satisfies
\begin{equation}\label{6.1} u^{(k)}_n-u^{(l)}_n\rightarrow 0 \quad \hbox { in  } C^0_{loc}([T,+\infty);  \mathcal H_0)\cap  C^1_{loc}([T,+\infty);  \mathcal H_{-1})\end{equation}
for all $k, l$ with $1\leqslant k, l\leqslant N$ as $ n\rightarrow +\infty$.
\end {definition}

Define the synchronization matrix of order $(N-1)\times N$ by
\begin{equation}
 C_1=\begin{pmatrix}
1&-1\\
&1&-1\\
&&\ddots&\ddots\\
&&&1&-1\end{pmatrix}.
\end{equation}
Clearly,
\begin{equation}\text{Ker}(C_1) =\hbox{Span}\{e_1\} \hbox{  with }e_1= (1, \cdots, 1)^T.\label{6.2} \end{equation}
Then, the approximate boundary synchronization (\ref{6.1}) can be equivalently rewritten as
\begin{equation}C_1U_n\rightarrow 0\quad \hbox{ in  } (C^0_{loc}([T,+\infty);  \mathcal H_{0}))^{N-1} \cap  (C^1_{loc}([T,+\infty);  (\mathcal H_{-1}))^{N-1}\label{6.3} \end{equation}
as $n\rightarrow +\infty.$
\begin{definition}  The matrix $A$ satisfies the condition of $C_1$-compatibility, if  there exists a unique matrix $\overline A_1$ of order $(N-1)$, such that
 \begin{equation}C_1A=\overline A_1 C_1. \label{6.4} \end{equation}
The matrix $\overline A_1$ is called the reduced matrix of $A$ by $C_1$. \end{definition}

\begin{rem}
It was  shown in \cite{Wei} that the condition of $C_1$-compatibility  (\ref{6.4}) is equivalent to
\begin{equation}A\hbox{Ker}(C_1)\subseteq \hbox{Ker}(C_1).\label{6.5} \end{equation}
Then, noting (\ref{6.2}), the vector $e_1 =(1,\cdots, 1)^T$ is  an eigenvector of $A$,
corresponding to the eigenvalue $a$ given by
\begin{equation} a = \sum_{j=1}^Na_{ij},\quad i=1,\cdots, N.\label{6.6} \end{equation}
In \eqref{6.6}, $\sum_{j=1}^Na_{ij}$ is independent of   $i=1,\cdots, N$, called the raw-sum condition, which is also equivalent to the condition of $C_1$-compatibility \eqref{6.4} or \eqref{6.5}.

Similarly, the matrix $B$ satisfies the condition of $C_1$-compatibility, if  there exists a unique matrix $\overline B_1$ of order $(N-1)$, such that
 \begin{equation}C_1B=\overline B_1 C_1, \label{6.7} \end{equation}
which  is equivalent to the fact that
\begin{equation}B\hbox{Ker}(C_1)\subseteq \hbox{Ker}(C_1).\label{6.8} \end{equation}
Moreover,  the vector $e_1 =(1,\cdots, 1)^T$ is  also an eigenvector of $B$,
corresponding to the eigenvalue $b$ given by
\begin{equation} b = \sum_{j=1}^Nb_{ij},\quad i=1,\cdots, N,\label{6.9} \end{equation}
where the sum $\sum_{j=1}^Nb_{ij}$ is independent of   $i=1,\cdots, N$.\end{rem}

\begin{theoreme}   \label{th6.1}    Assume that system (\ref{4.1})  is  approximately  synchronizable.
Then we necessarily have $\hbox{rank}(\mathcal R)\geqslant N-1$.
\end{theoreme}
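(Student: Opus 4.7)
I would argue by contradiction. Suppose $\hbox{rank}(\mathcal R) \leqslant N-2$, so $d := \hbox{dim Ker}(\mathcal R^T) \geqslant 2$. The plan is to adapt the projection idea from Corollary \ref{th4.4}, with the twist that we must select a vector $E \in \hbox{Ker}(\mathcal R^T)$ orthogonal to the synchronization direction $e_1 = (1,\cdots,1)^T$, so that the hypothesis $C_1 U_n \to 0$ alone is strong enough to detect the non-trivial projection.

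\textbf{Key steps.} First, by Lemma \ref{th2.1}, $\hbox{Ker}(\mathcal R^T) \subseteq \hbox{Ker}(D^T)$ and is invariant under both $A^T$ and $B^T$. Since $d \geqslant 2$, the subspace $\hbox{Ker}(\mathcal R^T) \cap \{e_1\}^\perp$ has dimension $\geqslant 1$, and I pick a nonzero $E$ in it. The condition $(E,e_1)=0$, i.e.\ $E \in \hbox{Im}(C_1^T)$, lets me write $E = C_1^T \xi$ for some $\xi \in \mathbb R^{N-1}$, so that $(E, U_n) = \xi^T(C_1 U_n)$. Next, fix a basis $\{F_1,\cdots,F_d\}$ of $\hbox{Ker}(\mathcal R^T)$; testing \eqref{4.1} against each $F_r$ and using the three invariances, the scalars $u_r := (F_r, U_n)$ satisfy---exactly as in \eqref{4.5}-\eqref{4.6}---a closed coupled wave system with Robin boundary conditions, completely decoupled from the controls $H_n$. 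In particular $u_E := (E, U_n)$ is a fixed linear combination of the $u_r$, so it depends only on the initial data $(\widehat U_0, \widehat U_1)$ and is the same function for every $n$. Third, the approximate synchronization \eqref{6.3} together with the identity $(E, U_n) = \xi^T(C_1 U_n)$ forces $u_E \to 0$ in $C^0_{loc}([T,+\infty);\mathcal H_0)$; since $u_E$ does not depend on $n$, this yields $u_E(t,\cdot) \equiv 0$ in $\mathcal H_0$ for every $t \geqslant T$---and this holds for \emph{every} initial datum $(\widehat U_0, \widehat U_1) \in (\mathcal H_0)^N \times (\mathcal H_{-1})^N$. Finally, the uncontrolled evolution ($H\equiv 0$ in Proposition \ref{th5.1}) extends by time reversibility to a group on $(\mathcal H_0)^N \times (\mathcal H_{-1})^N$, so the map $(\widehat U_0, \widehat U_1) \mapsto U(T,\cdot)$ is surjective onto $(\mathcal H_0)^N$; hence $(E, v) = 0$ for all $v \in (\mathcal H_0)^N$, forcing $E = 0$, a contradiction.

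\textbf{Main obstacle.} The critical point is the last step: guaranteeing that the control-independent projection $u_E$ actually fails to vanish identically on $[T,+\infty)$ for some initial datum. I would ground this on the reversibility of the wave evolution in the framework of Proposition \ref{th5.1}: with $H \equiv 0$, the Robin boundary condition $\partial_\nu U + BU = 0$ is time-symmetric, so the time-$T$ map $S_T$ is invertible, and $U(T,\cdot)$ sweeps out all of $(\mathcal H_0)^N$ as $(\widehat U_0, \widehat U_1)$ varies. Everything else is a transparent adaptation of the Corollary \ref{th4.4} projection argument; the only new ingredient is the elementary dimension count producing a nonzero $E \in \hbox{Ker}(\mathcal R^T)$ with $(E, e_1) = 0$.
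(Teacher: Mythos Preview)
Your proposal is correct and follows essentially the same route as the paper's proof: both argue by contradiction, use the dimension count $\dim\hbox{Ker}(\mathcal R^T)+\dim\hbox{Im}(C_1^T)>N$ to produce a nonzero $E\in\hbox{Ker}(\mathcal R^T)\cap\hbox{Im}(C_1^T)$, project the system onto a basis of $\hbox{Ker}(\mathcal R^T)$ to obtain the uncontrolled reduced system \eqref{4.5}--\eqref{4.6}, and then exploit reversibility to trace the vanishing of $(E,U_n)$ at time $T$ back to a contradiction on the initial data. The only cosmetic difference is in this last step: the paper runs the backward-uniqueness argument in the reduced $d\times d$ system to obtain $\sum_r\alpha_r(E_r,\widehat U_0)=\sum_r\alpha_r(E_r,\widehat U_1)=0$, whereas you invoke surjectivity of the full uncontrolled time-$T$ map on $(\mathcal H_0)^N$---both amount to the time-reversibility of the wave evolution and yield the same conclusion $E=0$.
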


\begin{proof}  Otherwise, we have $\hbox{dim Ker}(\mathcal R^T)>1$. Let $\hbox{Ker}(\mathcal R^T)=\hbox{Span}\{E_1, \cdots, E_d\}$ with $d>1$. Noting that
\begin{equation}\hbox{dim Im}(C_1^T) +  \hbox{dim Ker}(\mathcal R^T)= N-1+d>N,
\end{equation}
there   exists an unit  vector $E\in  Im(C_1^T)\cap \hbox{Ker}(\mathcal R^T)$.
Let $E=C_1^Tx$ with $x\in \mathbb R^{N-1}$. The approximate boundary synchronization (\ref{6.3}) implies that
\begin{equation}(E, U_n) =(x, C_1U_n) \rightarrow 0  \quad  \hbox{ in  }  C^0_{loc}([T,+\infty);  \mathcal H_{0})\cap C^1_{loc}([T,+\infty);  \mathcal H_{-1})
\label{6.12} \end{equation}
as $n\rightarrow +\infty. $

On the other hand, since $E\in  \hbox{Ker}(\mathcal R^T)$, we have
\begin{equation} E=\sum_{r=1}^d\alpha_rE_r,\label{6.12c}\end{equation}
where the coefficients $\alpha_1, \cdots, \alpha_d$  are not all zero.
By Lemma \ref{th2.1}, Ker$(\mathcal R^T)$ is contained in
Ker$(D^T)$ and invariant for both $A^T$ and $B^T$, therefore we still have  \eqref{3.6a} and  \eqref{3.6}.
Thus,  applying $E_r$ to  problem (\ref{4.1})-(\ref{4.2}) and setting
$u_r = (E_r, U_n)$ for $1\leqslant r\leqslant d,$ we find again     problem (\ref{4.5})-(\ref{4.6}) with homogeneous boundary conditions. Noting  that problem (\ref{4.5})-(\ref{4.6}) is independent of   $n$, it follows from (\ref{6.12}) and \eqref {6.12c} that
\begin{equation} \sum_{r=1}^d\alpha_ru_r(T)\equiv \sum_{r=1}^d\alpha_ru_r'(T)\equiv0.\end{equation}
Then,  by well-posedness,  it is easy to see that

\begin{equation} \sum_{r=1}^d\alpha_r(E_r, \widehat U_0) \equiv \sum_{r=1}^d\alpha_r(E_r, \widehat U_1) \equiv 0\end{equation}
for an given    initial data $(\widehat U_0,  \widehat U_1)\in (\mathcal H_0)^N\times(\mathcal H_{-1})^N$.  This yields
\begin{equation} \sum_{r=1}^d\alpha_rE_r = 0. \end{equation}
Because of the linear independence of the vectors $E_1, \cdots, E_d$, we get a contradiction  $\alpha_1=\cdots =\alpha_d=0.$
\end{proof}

\begin{theoreme}   \label{th6.2}      Assume  that  system (\ref{4.1}) is approximately
synchronizable  under the minimum  $\hbox{rank}(\mathcal R)=N-1.$  Then,  we have the following assertions:

(i) There exists a vector $E_1 \in \hbox{Ker}(\mathcal R^T)$, such that  $(E_1, e_1)=1$  with $e_1=(1, 1, \cdots, 1)^T$.

(ii) For any given initial data $(\widehat U_0, \widehat U_1)\in
(\mathcal H_0)^N\times (\mathcal H_{-1})^N$, there exists a unique scalar function $u$ such that
\begin{equation} u^{(k)}_n\rightarrow u
\quad \hbox{ in  } C^0_{loc}([T,+\infty);   \mathcal H_{0})\cap  C^1_{loc}([T,+\infty); \mathcal H_{-1}) \label{6.3bb}\end{equation}
for all $1\leqslant k\leqslant N$ as $n\rightarrow +\infty.$

(iii) The matrices  $A$ and $B$  satisfy  the conditions of $C_1$-compatibility (\ref{6.4}) and (\ref{6.7}), respectively.
\end{theoreme}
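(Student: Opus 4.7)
The overall plan is to exploit the hypothesis $\dim\mathrm{Ker}(\mathcal R^T) = 1$, which concentrates the uncontrollable dynamics into a single scalar quantity, together with the key observation reused throughout: if $E\in\mathrm{Ker}(\mathcal R^T)$, then by Lemma \ref{th2.1} the projection $(E,U_n)$ satisfies a scalar, uncontrolled Robin wave problem of type (\ref{4.5})--(\ref{4.6}), hence is independent of $n$ and determined only by the initial data. Let $E_1$ generate $\mathrm{Ker}(\mathcal R^T)$. For (i), suppose for contradiction that $(E_1,e_1) = 0$. Then $E_1 \in e_1^\perp = \mathrm{Im}(C_1^T)$, say $E_1 = C_1^T x$, so the approximate synchronization $C_1 U_n \to 0$ forces $(E_1,U_n) = (x,C_1 U_n) \to 0$ on $[T,+\infty)$. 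But the left-hand side is a single fixed function $u$ of $t$, so $u \equiv 0$ on $[T,+\infty)$, and reversibility of the scalar wave problem gives $u \equiv 0$ globally, i.e.\ $(E_1,\widehat U_0) = (E_1,\widehat U_1) = 0$ for every initial datum. Choosing for instance $(\widehat U_0,\widehat U_1) = (E_1,0)$ yields $|E_1|^2 = 0$, a contradiction; a rescaling then normalizes $(E_1,e_1) = 1$.

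For (ii), let $u := (E_1,U_n)$ be the control-independent scalar function just described. Combining $(E_1,e_1) = 1$ with the synchronization $u_n^{(k)} - u_n^{(l)} \to 0$ gives, for any fixed $l$,
\begin{equation*}
u = \sum_{k=1}^N E_1^{(k)} u_n^{(k)} = u_n^{(l)} (E_1,e_1) + o(1) = u_n^{(l)} + o(1)
\end{equation*}
in $C^0_{loc}([T,+\infty);\mathcal H_0) \cap C^1_{loc}([T,+\infty);\mathcal H_{-1})$, so $u_n^{(l)} \to u$, and then $u_n^{(k)} = u_n^{(l)} + o(1) \to u$ for every $k$. Uniqueness of $u$ is immediate from its characterization by the scalar Cauchy problem.

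For (iii), I would pass to the limit in the PDE on $[T,+\infty)$, where $H_n \equiv 0$ so each $U_n$ solves the uncontrolled system. Part (ii) gives $U_n \to u e_1$ in the indicated spaces, so in the distributional sense the limit satisfies
\begin{equation*}
u'' e_1 - \Delta u\, e_1 + u\, A e_1 = 0 \text{ in } (T,+\infty)\times\Omega, \qquad \partial_\nu u\, e_1 + u\, B e_1 = 0 \text{ on } (T,+\infty)\times\Gamma_1.
\end{equation*}
Substituting the scalar identities $u''-\Delta u = -\alpha_{11} u$ in $\Omega$ and $\partial_\nu u = -\beta_{11} u$ on $\Gamma_1$ coming from problem (\ref{4.5})--(\ref{4.6}) (with $d=1$) yields
\begin{equation*}
u\,(A e_1 - \alpha_{11} e_1) \equiv 0 \text{ in } (T,+\infty)\times\Omega, \qquad u\,(B e_1 - \beta_{11} e_1) \equiv 0 \text{ on } (T,+\infty)\times\Gamma_1.
\end{equation*}
Picking initial data with $(E_1,\widehat U_0) \neq 0$ makes $u$ a nontrivial solution of the scalar problem, so by reversibility $u \not\equiv 0$ on $(T,+\infty)\times\Omega$, which gives $A e_1 = \alpha_{11} e_1$. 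If in addition $u$ vanished on $(T,+\infty)\times\Gamma_1$, the Robin condition would give $\partial_\nu u \equiv 0$ there and Holmgren's theorem (for $T$ large enough) would force $u \equiv 0$ in $\Omega$, contradicting the choice of initial data; hence $B e_1 = \beta_{11} e_1$. Both identities say that $\mathrm{Span}\{e_1\} = \mathrm{Ker}(C_1)$ is invariant for $A$ and $B$, which is precisely the $C_1$-compatibility conditions \eqref{6.4} and \eqref{6.7}.

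The most delicate step I anticipate is the end of (iii): translating the weak convergence $U_n \to u e_1$ (in the low-regularity spaces of Proposition \ref{th5.1}) into the pointwise algebraic identities $A e_1 = \alpha_{11} e_1$ and $B e_1 = \beta_{11} e_1$, and in particular ensuring that the trace on $\Gamma_1$ is defined well enough to extract the boundary identity. Because the controls vanish on $[T,+\infty)$, the interior passage is standard; the nontriviality of $u$ needed to extract the algebraic identities is secured by the reversibility of the scalar wave problem combined with Holmgren's uniqueness, and this is the crux of the argument.
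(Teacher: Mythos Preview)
Your proof is correct and follows essentially the same route as the paper: part (i) is identical, part (ii) reaches the same conclusion $U_n\to u e_1$ (you via the direct identity $u=\sum_k E_1^{(k)}u_n^{(k)}=u_n^{(l)}(E_1,e_1)+o(1)$, the paper via inverting the matrix $\binom{C_1}{E_1^T}$), and part (iii) passes to the limit in the uncontrolled system on $[T,+\infty)$ in both cases. The one noteworthy difference is your nontriviality argument in (iii): the paper shows $u\not\equiv 0$ on $\Gamma_1$ by contradiction through Corollary~\ref{th4.4} (if $u\equiv 0$ for every datum then $U_n\to 0$, i.e.\ approximate null controllability, impossible when $\mathrm{rank}(\mathcal R)=N-1$), whereas you simply pick one datum with $(E_1,\widehat U_0)\neq 0$ and use Holmgren directly---this is slightly cleaner and avoids the extra appeal, while yielding the same identities $Ae_1\in\mathrm{Ker}(C_1)$, $Be_1\in\mathrm{Ker}(C_1)$.
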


\begin{proof} (i)  Noting that   $\hbox{dim Ker}(\mathcal R^T) =1,$  by Lemma \ref{th2.1}, there exists  a non-zero vector $E_1 \in \hbox{Ker}(\mathcal R^T)$,  such that
\begin{equation} D^TE_1 =0,\quad A^TE_1  = \alpha E_1 ,\quad B^T E_1 = \beta E_1.\end{equation}
We   claim  that $E_1 \not \in \hbox{Im}(C_1^T)$.
Otherwise, applying $E_1 $ to problem (\ref{4.1})-(\ref{4.2}) with $U=U_n$ and $H=H_n$,  and setting $u= (E_1 , U_n)$, it follows that
\begin{equation}\label{6.13} \left\lbrace
\begin{array}{ll}u'' -\Delta u +  \alpha u
 =0&\hbox{ in } (0, +\infty)\times\Omega,\cr
u =0 &\hbox{ on }  (0, +\infty)\times\Gamma_0,\cr
 \partial_\nu u +\beta u =0&\hbox{ on }(0, +\infty)\times \Gamma_1
 \end{array}
\right.
\end{equation}
with the following initial data
\begin{equation}\label{6.14}
t=0:\quad u =(E_1 , \widehat U_0), \quad  u'=(E_1 ,\widehat  U_1)\quad \hbox{in } \Omega.\end{equation}
Suppose that $E_1  \in \hbox{Im}(C_1^T)$, there exists a vector $x\in\mathbb R^{N-1}$,  such that $E_1 = C_1^Tx$. Then, the approximate boundary synchronization (\ref{6.3}) implies
 \begin{equation}
(u(T),  u' (T)) =((x, C_1U_n(T)), (x, C_1U_n'(T)))\rightarrow (0, 0)\end{equation}
in the space $\mathcal H_0\times \mathcal H_{-1}$ as $ n\rightarrow +\infty$. Since problem \eqref{6.13}-\eqref{6.14} is  independent of $n$, so  is  the solution $u$.  We  get thus 
\begin{equation}
u (T)\equiv u' (T)\equiv 0.\end{equation}
Thus, because of the well-posedness of problem (\ref{6.13})-(\ref{6.14}), it follows that
\begin{equation}
(E_1 , \widehat U_0)= (E_1 , \widehat U_1)=0\end{equation}
for any given  initial data $(\widehat U_0,  \widehat U_1)\in (\mathcal H_0)^N\times(\mathcal H_{-1})^N$.  This yields  a contradiction $E_1 =0$.

Since $E_1 \not \in \hbox{Im}(C_1^T)$, noting that $\hbox{Im}(C_1^T) = \hbox{Span}\{e_1 \}^\perp$, we have  $(E_1 , e_1 )\not =0$.
Without loss of generality, we can take $E_1 $ such that $(E_1 , e_1 )=1$.

(ii)  Since $E_1 \not \in Im(C_1^T)$, the matrix $
\begin{pmatrix} C_1\\ E_1 ^T \end{pmatrix}$ is invertible. Moreover, we have
\begin{equation}
\begin{pmatrix} C_1\\ E_1^T \end{pmatrix}e_1  =\begin{pmatrix} 0 \\ 1\end{pmatrix}.\label{6.a1}\end{equation}
Noting \eqref{6.3}, we have
\begin{equation}  \begin{pmatrix} C_1\\ E_1^T \end{pmatrix}U_n  = \begin{pmatrix} C_1U_n \\ (E_1,U_n)\end{pmatrix}\rightarrow
 \begin{pmatrix} 0\\
u\end{pmatrix} = u \begin{pmatrix} 0\\
1\end{pmatrix} \end{equation}
as $n\rightarrow +\infty$ in the space
\begin{equation}(C^0_{loc}([T,+\infty);  \mathcal H_0))^N\cap  (C^1_{loc}([T,+\infty);  \mathcal H_{-1}))^N. \label{6.a2} \end{equation}
Then, noting \eqref{6.a1}, it follows that
\begin{equation} U_n  = \begin{pmatrix} C_1\\ E_1^T \end{pmatrix}^{-1} \begin{pmatrix} C_1U_n \\ E_1^TU_n\end{pmatrix}\rightarrow u\begin{pmatrix} C_1\\ E_1^T \end{pmatrix}^{-1} \begin{pmatrix} 0\\
1\end{pmatrix}  =ue_1\label{6.15} \end{equation}
in the the space \eqref{6.a2}, namely,  \eqref{6.3bb} holds.

(iii) Applying $C_1$ to  system (\ref{4.1}) with $U=U_n$ and $H=H_n$,
and passing to the limit as $n\rightarrow +\infty$,  it follows from (\ref{6.3}) and   (\ref{6.15}) that
\begin{equation}C_1Ae_1 u=0 \quad \hbox{in } [T,+\infty)\times \Omega\label{6.16} \end{equation}
and
\begin{equation}C_1Be_1u=0 \quad \hbox{on }  [T,+\infty)\times \Gamma_1.\label{6.17} \end{equation}
We claim that at least for an initial data $(\widehat U_0,\widehat U_1)$, we have
\begin{equation}u\not\equiv0 \quad \hbox{on }  [T,+\infty)\times \Gamma_1.\end{equation}
Otherwise, it follows from system \eqref{6.13} that
\begin{equation}\partial_\nu u \equiv u\equiv0 \quad \hbox{on }  [T,+\infty)\times \Gamma_1,\end{equation}
then, by Holmgreen's uniqueness  theorem, we get
$u\equiv 0$ for all  the initial data $(\widehat U_0,\widehat U_1)$, namely, system (\ref{4.1}) is approximately null controllable under the condition $\hbox{dim Ker}(\mathcal R^T)=1.$ This contradicts  Corollary \ref{th4.4}. Then, it follows from (\ref{6.16})  and (\ref{6.17}) that
$C_1Ae_1=0$ and  $C_1Be_1=0$, which give the conditions of $C_1$-compatibility for $A$ and $B$, respectively. The proof is  complete.
\end{proof}

Assume that $A$ and $B$ satisfy the corresponding conditions  of $C_1$-compatibility, namely, there exist two matrices $\overline A_1$ and  $\overline B_1$ such that  $C_1A= \overline A_1C_1$ and  $C_1B= \overline B_1C_1$, respectively. Setting $W=C_1U$ in problem  (\ref{4.1})-(\ref{4.2}), we get the following reduced system
\begin{equation}\label {6.10} \left\lbrace
\begin{array}{ll} W''-\Delta W+ \overline A_1W=0 & \hbox{in }  (0, +\infty)\times \Omega, \\
W= 0& \hbox{on } (0, +\infty)\times \Gamma_0,\\
\partial_\nu W+ \overline B_1W=  C_1DH & \hbox{on }   (0, +\infty)\times\Gamma_1\end{array}
\right.
\end{equation}
with the initial condition
\begin{equation}\label{6.11} t=0:\quad W=C_1 \widehat U_0, \
W'=C_1\widehat U_1\quad \hbox{in }  \Omega.\end{equation}
Since $B$ is similar to a symmetric matrix, so is its reduced matrix $\overline B_1$ ({\it cf.} Proposition \ref{th7.2} below). Then,  by Proposition \ref{th5.1}, the  reduced problem (\ref{6.10})-(\ref{6.11}) is well-posed in the space $(\mathcal H_0)^{N-1}\times (\mathcal H_{-1})^{N-1}$.

Accordingly, consider the reduced adjoint system
\begin{equation}\begin{cases}\Psi''-\Delta \Psi+ \overline A_1^T\Psi=0&\hbox{ in }  (0, T)\times \Omega,\\
\Psi=0 & \hbox{ on }  (0, T)\times \Gamma_0,\\
\partial_\nu \Psi+\overline B_1^T\Psi=0 &
\hbox{ on }   (0, T)\times\Gamma_1\end{cases}\label{4.1a}\end{equation} with the $C_1D$-observation
\begin{equation}(C_1D)^T\Psi\equiv 0\quad \hbox{on } (0, T)\times \Gamma_1.\label{4.1b}\end{equation}
Obviously, we have
\begin{proposition}   \label{th6.a1}    Under the conditions of $C_1$-compatibility for $A$ and $B$, system (\ref{4.1})  is  approximately  synchronizable if and only if the reduced system (\ref{6.10}) is approximately  null controllable, or equivalently,  if and only if the reduced adjoint system (\ref{4.1a})  is $C_1D$-observable.
\end{proposition}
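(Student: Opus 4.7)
The plan is to reduce everything to observing that, under the $C_1$-compatibility of $A$ and $B$, the map $U \mapsto C_1 U$ sends solutions of the original system \eqref{4.1} to solutions of the reduced system \eqref{6.10} with the same control $H$. Starting from \eqref{4.1}, I would apply $C_1$ to each of the three equations: the PDE becomes $(C_1U)'' - \Delta(C_1U) + C_1 A U = 0$, which becomes $W'' - \Delta W + \overline{A}_1 W = 0$ upon using $C_1 A = \overline{A}_1 C_1$; the boundary condition on $\Gamma_0$ is immediate; on $\Gamma_1$, $\partial_\nu W + C_1 B U = C_1 D H$ turns into $\partial_\nu W + \overline{B}_1 W = C_1 D H$ via $C_1 B = \overline{B}_1 C_1$. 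The initial conditions transform accordingly to \eqref{6.11}.

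Next, I would use this correspondence to prove the first equivalence. For the ``$\Rightarrow$'' direction, given any initial data $(\widehat{W}_0,\widehat{W}_1) \in (\mathcal H_0)^{N-1} \times (\mathcal H_{-1})^{N-1}$ of the reduced system, use the surjectivity of $C_1 : \mathbb R^N \to \mathbb R^{N-1}$ (pick any right inverse $P$ and set $\widehat{U}_i = P \widehat{W}_i$) to lift it to initial data of the original system; by hypothesis there is a synchronizing sequence $\{H_n\}$, and the computation above shows that the solution $W_n$ of the reduced system with initial data $(\widehat{W}_0,\widehat{W}_1)$ and control $H_n$ equals $C_1 U_n$, which tends to $0$ thanks to the equivalent formulation \eqref{6.3} of the approximate synchronization. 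For the ``$\Leftarrow$'' direction, given any initial data of \eqref{4.1}, apply the approximate null controllability of the reduced system to the initial data $(C_1 \widehat{U}_0, C_1 \widehat{U}_1)$; the resulting controls $\{H_n\}$ drive $W_n = C_1 U_n$ to zero, which is precisely \eqref{6.3}.

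For the second equivalence, I would invoke Proposition \ref{th4.3} directly on the reduced system \eqref{6.10}. This requires that the well-posedness framework of \S3 be applicable to \eqref{6.10}, which in turn needs $\overline{B}_1$ to be similar to a symmetric matrix; the paper signals this by the forward reference to Proposition \ref{th7.2}, so I would simply cite that. With well-posedness in hand, Proposition \ref{th4.3} yields the equivalence between the approximate null controllability of \eqref{6.10} and the $C_1 D$-observability of its adjoint \eqref{4.1a}.

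The only mildly delicate point is the reduced-system well-posedness, which I would handle by quoting Proposition \ref{th7.2}; the rest of the argument is a direct transcription of definitions through the algebraic identities $C_1 A = \overline{A}_1 C_1$ and $C_1 B = \overline{B}_1 C_1$, together with the surjectivity of $C_1$ needed to transfer initial data from one system to the other.
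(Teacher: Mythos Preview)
Your proposal is correct and is precisely the natural argument: the paper itself gives no proof at all, merely prefacing the proposition with ``Obviously, we have''. Your write-up supplies exactly the details one would expect---the $C_1$-intertwining of solutions via the compatibility relations, the surjectivity of $C_1$ to lift reduced initial data, and the appeal to Proposition~\ref{th4.3} (with Proposition~\ref{th7.2} for the well-posedness of the reduced problem)---so there is nothing to compare against.
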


\begin{theoreme}   \label{th6.3}   Assume that   $A$ and $ B$ satisfy the conditions of $C_1$-compatibility (\ref{6.4}) and (\ref{6.7}), respectively.  Assume furthermore that $A^T$ and $ B^T$ admit a common eigenvector
$E_1$,  such that  $(E_1, e_1)=1$ with $e_1 =(1, \cdots, 1)^T$.  Let $D$  be defined by
\begin{equation}\hbox{Im}(D)=\hbox{Span}\{E_1\}^\perp.\label{6.c1}\end{equation}
Then  system (\ref{4.1}) is  approximate
synchronizable. Moreover, we have  $\hbox{rank}(\mathcal R)=N-1$.
\end{theoreme}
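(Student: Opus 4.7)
The plan is to combine the reduction via $C_1$-compatibility with an application of Proposition \ref{th3.3} to the reduced adjoint system, and to close the rank statement by invoking Theorem \ref{th6.1} as a matching upper bound.

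First, I would check that $\hbox{Span}\{E_1\}\subseteq \hbox{Ker}(\mathcal R^T)$. The identity $\hbox{Im}(D)=\hbox{Span}\{E_1\}^\perp$ gives $D^TE_1=0$, while the common-eigenvector assumption makes $\hbox{Span}\{E_1\}$ invariant under $A^T$ and $B^T$. Lemma \ref{th2.1} then places $E_1$ in $\hbox{Ker}(\mathcal R^T)$, so
\begin{equation}
\hbox{rank}(\mathcal R)\leqslant N-1.
\end{equation}

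Second, using the $C_1$-compatibility of $A$ and $B$, I would pass to the reduced problem \eqref{6.10}-\eqref{6.11} and its reduced adjoint \eqref{4.1a}-\eqref{4.1b} in dimension $N-1$; the well-posedness is ensured by the fact that $\overline B_1$ is similar to a symmetric matrix, as mentioned in the paragraph preceding the theorem. The key algebraic claim is
\begin{equation}
\hbox{rank}(C_1D)=N-1.
\end{equation}
Indeed, $\hbox{rank}(D)=N-1$ because $\hbox{Im}(D)$ has codimension one in $\mathbb R^N$, and $C_1$ is injective on $\hbox{Im}(D)$: if $x\in \hbox{Im}(D)\cap \hbox{Ker}(C_1)=\hbox{Span}\{E_1\}^\perp\cap\hbox{Span}\{e_1\}$, then $x=\lambda e_1$ with $\lambda=\lambda(E_1,e_1)=(E_1,x)=0$ since $(E_1,e_1)=1$.

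Third, I would apply the converse part of Proposition \ref{th3.3} to the reduced adjoint system with the matrix $C_1D$, which is square and invertible of size $N-1$. The observation $(C_1D)^T\Psi\equiv 0$ on $[0,T]\times\Gamma_1$ then forces $\Psi\equiv 0$ on $[0,T]\times\Gamma_1$; the reduced Robin boundary condition in \eqref{4.1a} yields $\partial_\nu\Psi\equiv 0$ there, and Holmgren's uniqueness theorem delivers $\Psi\equiv 0$ for $T>0$ large enough. Thus the reduced adjoint system is $C_1D$-observable, and Proposition \ref{th6.a1} delivers the approximate boundary synchronization of system \eqref{4.1}. The approximate synchronization now combined with the necessary condition in Theorem \ref{th6.1} gives $\hbox{rank}(\mathcal R)\geqslant N-1$, which together with the first step produces the equality $\hbox{rank}(\mathcal R)=N-1$. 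The only delicate point is the injectivity of $C_1$ on $\hbox{Im}(D)$, which is forced precisely by the normalization $(E_1,e_1)=1$; everything else is a direct application of the lemmas and propositions already established in the paper.
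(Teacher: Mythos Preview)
Your proposal is correct and follows essentially the same route as the paper: establish $\hbox{rank}(C_1D)=N-1$ via $\hbox{Ker}(C_1)\cap\hbox{Im}(D)=\{0\}$, invoke Holmgren on the reduced adjoint system, then apply Proposition~\ref{th6.a1}. The one minor divergence is in the lower bound $\hbox{rank}(\mathcal R)\geqslant N-1$: you obtain it from Theorem~\ref{th6.1} after proving synchronizability, whereas the paper simply notes the trivial inclusion $\hbox{Im}(D)\subseteq\hbox{Im}(\mathcal R)$, which gives $\hbox{rank}(\mathcal R)\geqslant\hbox{rank}(D)=N-1$ directly and avoids the detour through Theorem~\ref{th6.1}.
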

\begin{proof}
Since $(E_1, e_1)=1$, noting \eqref{6.c1}, we have $e_1\not \in  \hbox{Im}(D)$ and $\hbox{Ker}(C_1)\cap
\hbox{Im}(D)=\{0\}$. Therefore,  by  Lemma 2.2 in \cite {Cocv2017}, we have
\begin{equation}\hbox{rank}(C_1D) = \hbox{rank}(D)=N-1.\end{equation}
Thus,  the adjoint system \eqref{4.1a}
is $C_1D$-observable because of Holmgren's uniqueness theorem. By Proposition \ref{th6.a1}, system (\ref{4.1}) is  approximate
synchronizable.

Noting \eqref{6.c1}, we have $E_1\in \hbox{Ker}(D^T)$. Moreover, since $E_1$ is a common eigenvector of $A^T$ and $ B^T$, we  have   $E_1\in \hbox{Ker}(\mathcal R^T)$, hence   $\hbox{dim Ker}(\mathcal R^T)\geqslant 1$, namely, $\hbox{rank}(\mathcal R) \leqslant N-1$. On the other hand, since  $\hbox{rank}(\mathcal R)\geqslant \hbox{rank}(D)= N-1$, we get $\hbox{rank}(\mathcal R) =N-1$.
The proof is  complete. \end{proof}

\vskip 1cm

\section {Approximate boundary synchronization by $p$-groups}
In this section, let $p\geqslant 1$ be an integer and
\begin{equation}0=n_0< n_1<n_2<\cdots<n_p=N.\end{equation}We
rearrange the components of the state variable $U$ into $p$ groups:
\begin{equation}(u^{(1)},  \cdots,  u^{(n_1)}),
(u^{(n_1+1)}, \cdots, u^{(n_2)}), \cdots,  (u^{(n_{p-1}+1)},
\cdots, u^{(n_p)}).\end{equation}

\begin{definition}   \label{th7.1} System (\ref{4.1}) is approximately  synchronizable by $p$-groups at the time $T>0$,  if for any given initial data  $( \widehat U_0, \widehat U_1)\in (\mathcal H_0)^{N}\times (\mathcal H_{-1})^{N}$, there exists a sequence
$\{H_n\}$ of boundary controls in $\mathcal L^M$
with compact support in $[0, T]$, such that the corresponding
sequence $\{U_n\}$ of solutions  to problem (\ref{4.1})-(\ref{4.2}) satisfies
\begin{equation} u^{(k)}_n-u^{(l)}_n\rightarrow 0 \quad \hbox{ in } C^0_{loc}([T,+\infty);  \mathcal H_{0})\cap  C^1_{loc}([T,+\infty);  \mathcal H_{-1})\label{7.0}\end{equation}
  for   $n_{r-1}+1\leqslant k,l\leqslant  n_r$ and $1\leqslant r\leqslant p$
as $n\rightarrow +\infty.$
\end{definition}

Let $S_r$  be  the following $(n_r-n_{r-1}-1)\times
(n_r-n_{r-1})$  matrix
\begin{equation} S_r = \begin{pmatrix}1&-1&0&\cdots&0\\
0&1&-1&\cdots&0\cr\vdots&\vdots&\ddots&\ddots
&\vdots\\
0&0&\cdots&1&-1\\\end{pmatrix}.\end{equation} Let $C_p$ be the following  $(N-p)\times N$ full row-rank matrix of
synchronization by $p$-groups:
\begin{equation}C_p=\begin{pmatrix}S_1\\
&S_2\\ &&\ddots
\\ &&& S_p\end{pmatrix}.\end{equation}
For $1\leqslant r\leqslant p$,  setting
\begin{equation}(e_r)_j = \left\lbrace
\begin{array}{l} 1, \quad n_{r-1}+1\leqslant j\leqslant n_{r},\\
0,\quad \hbox{otherwise}.\end{array}
\right.\label{7.1} \end{equation}
It is clear that
\begin{equation}\hbox{Ker}(C_p)= \hbox{Span}\{e_1, e_2,\cdots, e_p\}.\label{e_p}\end{equation}
Moreover,  the approximate boundary synchronization by $p$-groups (\ref{7.0}) can be equivalently rewritten as
\begin{equation}C_pU_n\rightarrow 0\quad
\hbox{ in }  (C^0_{loc}([T,+\infty);  \mathcal H_{0}))^{N-p} \cap  (C^1_{loc}([T,+\infty);  (\mathcal H_{-1}))^{N-p}\label{7.2} \end{equation}
as $n\rightarrow +\infty.$

\begin{definition} The matrix $A$ satisfies the condition of $C_p$-compatibility,  if there exists a  unique matrix $\overline A_p$ of order $(N-p)$, such that
 \begin{equation}C_pA=\overline A_p C_p. \label{7.3} \end{equation}
The matrix $\overline A_p$  is called the reduced matrix of $A$ by $C_p$.
\end{definition}

\begin{rem}
The  condition of $C_p$-compatibility (\ref{7.3}) is equivalent to
\begin{equation}A\hbox{Ker}(C_p)\subseteq \hbox{Ker}(C_p).\label{7.4} \end{equation}
Moreover, the reduced matrix $\overline A_p$ is given by
\begin{equation}\overline A_p = C_pAC_p^T(C_pC_p^T)^{-1} \label{7.4b} \end{equation}
(see  Lemma 3.3 in  \cite{Wei}). Similarly,
 the matrix $B$ satisfies the condition of $C_p$-compatibility, if  there exists a unique matrix $\overline B_p$ of order $(N-p)$, such that
 \begin{equation}C_pB=\overline B_p C_p, \label{7.5} \end{equation}
which  is equivalent to
\begin{equation}B\hbox{Ker}(C_p)\subseteq \hbox{Ker}(C_p).\label{7.6} \end{equation}
\end{rem}

\begin{proposition}\label{th7.2}
Assume that  $A$ satisfies the  condition of $C_p$-compatibility (\ref{7.3}).   Let  $\{x_l^{(k)}\}_{ 1\leqslant k\leqslant d, 1\leqslant l\leqslant r_k}$ be a system of root vectors  of the matrix $A$, corresponding to the eigenvalues $\lambda_k \ (1\leqslant k\leqslant d)$, such that for each $k \ (1\leqslant k\leqslant d)$ we have
 \begin{equation} \label{7.20b}Ax_l^{(k)}= \lambda_kx_l^{(k)} +  x_{l+1}^{(k)},\quad   1\leqslant l\leqslant r_k \hbox{ with } x_{r_k+1}^{(k)} =0.\end{equation}
Define the following projected vectors by
\begin{equation}\label{7.21b} \overline x_l^{(k)} = C_p x_l^{(k)},\quad 1\leqslant k\leqslant \overline d,\quad 1\leqslant l\leqslant\overline  r_k,
\end{equation}
where $\overline d\ (1\leqslant \overline d\leqslant d)$ and $\overline r_k \  (1\leqslant \overline r_k\leqslant r_k)$  are given by (\ref{7.22}) below. Then $\{\overline x_l^{(k)}\}_{1\leqslant k\leqslant\overline d, 1\leqslant l\leqslant \overline r_k}$ forms  a system of root vectors  of the reduced matrix $\overline A_p$. In particular, if $A$ is similar to a symmetric matrix, then so is $\overline A_p$.
\end{proposition}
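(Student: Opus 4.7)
The plan is to exploit the intertwining identity $\overline A_p C_p=C_p A$ entailed by the $C_p$-compatibility condition \eqref{7.3}. This identity says that $C_p\colon\mathbb R^N\to\mathbb R^{N-p}$ is a surjection conjugating $A$ to $\overline A_p$, and equivalently that $\hbox{Ker}(C_p)$ is an $A$-invariant subspace of dimension $p$. The proof will consist in transporting the root-vector structure of $A$ across this surjection.

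First I would apply $C_p$ to the Jordan relation \eqref{7.20b} and use $C_pA=\overline A_pC_p$ to obtain, with $\overline x_l^{(k)}:=C_p x_l^{(k)}$,
\begin{equation*}
\overline A_p\overline x_l^{(k)}=\lambda_k\overline x_l^{(k)}+\overline x_{l+1}^{(k)}.
\end{equation*}
Wherever the projections remain non-zero, they satisfy exactly the root-vector relation for $\overline A_p$ attached to the same eigenvalue $\lambda_k$. I would then record the zero-propagation principle which follows by applying $\overline A_p-\lambda_k I$ to the identity: if $\overline x_l^{(k)}=0$ then $\overline x_{l+1}^{(k)}=0$. Hence, for each $k$, the indices carrying a vanishing image form a terminal segment of $\{1,\dots,r_k\}$; the cutoff defines $\overline r_k$ and $\overline d$ of \eqref{7.22} as the length of the surviving part of each chain and the number of surviving chains, respectively.

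For completeness of the resulting family, I would use the $A$-invariance of $\hbox{Ker}(C_p)$ to adapt the original root-vector basis of $A$ to this invariant subspace, so that $\hbox{Ker}(C_p)$ is spanned precisely by the chain vectors sent to $0$ by $C_p$. The remaining $N-p$ chain vectors then project bijectively onto $\hbox{Im}(C_p)=\mathbb R^{N-p}$; combined with the first step this shows that $\{\overline x_l^{(k)}\}_{1\leqslant k\leqslant\overline d,\,1\leqslant l\leqslant\overline r_k}$ is a linearly independent family of root vectors filling up $\mathbb R^{N-p}$, hence a complete system of root vectors of $\overline A_p$. In the special case when $A$ is similar to a symmetric matrix, $A$ is diagonalisable over $\mathbb R$, so $r_k=1$ for every $k$; the surviving projections $\overline x_1^{(k)}$ are then real eigenvectors of $\overline A_p$ spanning $\mathbb R^{N-p}$, whence $\overline A_p$ is diagonalisable with real spectrum, i.e., similar to a symmetric matrix.

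The delicate point is the completeness step: for an arbitrary Jordan basis of $A$, discarding chain vectors whose $C_p$-image vanishes might leave more than $N-p$ non-zero projections, which would then necessarily be linearly dependent. Avoiding this requires choosing the Jordan basis to be adapted to the invariant subspace $\hbox{Ker}(C_p)$, a standard but non-automatic normal-form procedure which I expect to be the algebraic content of formula \eqref{7.22}. Once this selection is fixed, the entire verification reduces to applying the intertwining identity chain by chain.
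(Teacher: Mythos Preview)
Your proposal is correct and follows the same overall strategy as the paper: adapt the Jordan basis of $A$ to the $A$-invariant subspace $\hbox{Ker}(C_p)$ (this is precisely the ``without loss of generality'' choice behind \eqref{7.22}, and is exactly the delicate point you flag), project the surviving chain vectors through $C_p$, verify the Jordan relation for $\overline A_p$, and finish by a dimension count. Your derivation of the relation $\overline A_p\,\overline x_l^{(k)}=\lambda_k\overline x_l^{(k)}+\overline x_{l+1}^{(k)}$ is more direct than the paper's, however: you obtain it in one line by applying the intertwining identity $C_pA=\overline A_pC_p$ of \eqref{7.3} to \eqref{7.20b}, whereas the paper invokes the explicit formula $\overline A_p=C_pAC_p^T(C_pC_p^T)^{-1}$ from \eqref{7.4b}, splits each $x_l^{(k)}$ along the decomposition $\mathbb R^N=\hbox{Im}(C_p^T)\oplus\hbox{Ker}(C_p)$, and uses the projection identity $C_p^T(C_pC_p^T)^{-1}C_px=x$ on $\hbox{Im}(C_p^T)$ to reach the same conclusion. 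The paper's route makes the pseudo-inverse of $C_p$ visible; yours is shorter and uses only what \eqref{7.3} literally says. Your zero-propagation observation (if $\overline x_l^{(k)}=0$ then $\overline x_{l+1}^{(k)}=0$) is not stated in the paper, which instead fixes the adapted basis at the outset; the two organizations are equivalent.
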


 \begin{proof}Since Ker$(C_p)$ is an invariant subspace of $A$, without loss of generality,
we may assume that  there exist some integers $\overline d \   (1\leqslant \overline d\leqslant d)$ and $\overline r_k \ (1\leqslant \overline r_k\leqslant r_k)$, such that the  $\{x_l^{(k)}\}_{1\leqslant k\leqslant \overline d, 1\leqslant l\leqslant \overline r_k}$ forms a root system for the  restriction of $A$ on the invariant subspace Ker$(C_p)$. Then,
\begin{equation}\label{7.22}\hbox{Ker}(C_p) = \hbox{Span}\{x_l^{(k)}: \  1\leqslant k\leqslant \overline d,  \ 1\leqslant l\leqslant \overline r_k\}.
\end{equation}
In particular, we have
\begin{equation} \label{7.23}\sum_{k=1}^{\overline d} (r_k-\overline r_k) =p.
\end{equation}

Noting that $C_p^T(C_pC_p^T)^{-1}C_p$ is a projection from $\rr ^N$ onto $\hbox{Im}(C_p^T)$,  we have
\begin{equation}
C_p^T(C_pC_p^T)^{-1}C_p x=x,\quad \forall x\in \hbox{Im}(C_p^T).\label{7.24}
\end{equation}

On the other hand, by $\rr^N= \hbox{Im}(C_p^T)\oplus \hbox{Ker}(C_p)$ we can write
 \begin{equation} \label{7.25}x_l^{(k)}= \widehat x_l^{(k)} + \widetilde x_l^{(k)}\quad \hbox{with}\quad \widehat x_l^{(k)} \in \hbox{Im}(C_p^T),\quad
\widetilde  x_l^{(k)}\in \hbox{Ker}(C_p),\end{equation}
then it follows from (\ref{7.21b}) that
\begin{equation}\label{7.26} \overline x_l^{(k)} = C_p \widehat x_l^{(k)},\quad 1\leqslant k\leqslant \overline d,\quad 1\leqslant l\leqslant\overline  r_k.
\end{equation}
Thus, noting  (\ref{7.4b}) and (\ref{7.24}), we have
\begin{equation}
 \overline A_p  \overline x_l^{(k)}=C_pAC_p^T(C_pC_p^T)^{-1}C_p
\widehat x_l^{(k)}
=C_pA\widehat x_l^{(k)}.\end{equation}
Since Ker$(C_p)$ is invariant for $A$, $A\widetilde x_l^{(k)}\in \hbox{Ker}(C_p)$, then $C_pA\widetilde x_l^{(k)}=0$. It follows that

\begin{equation}
 \overline A_p  \overline x_l^{(k)}=C_pA(\widehat x_l^{(k)} +\widetilde x_l^{(k)})
=C_pAx_l^{(k)}.\end{equation}
Then, using (\ref{7.20b}) and (\ref{7.21b}), it is easy to see that

\begin{equation}
 \overline A_p  \overline x_l^{(k)}=C_p (\lambda_kx_l^{(k)} +x_{l+1}^{(k)})= \lambda_k\overline x_l^{(k)} +\overline x_{l+1}^{(k)}.
\end{equation}
Therefore, $\overline x_1^{(k)}, \overline x_2^{(k)}, \cdots,  \overline x_{\bar r_k}^{(k)}$ is  a Jordan  chain with length $\bar r_k$ of the reduced matrix $\overline A_p$,  corresponding to the eigenvalue $\lambda_k$.

 Since dim Ker$(C_p)=p$, the projected  system  $\{\overline x_l^{(k)}\}_{1\leqslant k\leqslant  \overline d, 1\leqslant  l\leqslant  \overline r_k}$ is of rank $(N-p)$. On the other hand, by (\ref{7.23}), system $\{\overline x_l^{(k)}\}_{1\leqslant k\leqslant \overline d, 1\leqslant l\leqslant \overline r_k}$ contains $(N-p)$ vectors, therefore, forms  a system of root vectors  of the reduced matrix $\overline A_p$. The proof is complete. \end{proof}

Assume that $A$ and $ B$ satisfy the  conditions of $C_p$-compatibility (\ref{7.3}) and (\ref{7.5}), respectively.  Setting $W=C_pU$ in problem  (\ref{4.1})-(\ref{4.2}), we get the following reduced system:
\begin{equation}\label {7.7} \left\lbrace
\begin{array}{ll} W''-\Delta W+ \overline A_pW=0& \hbox{ in }  (0, +\infty)\times \Omega, \\
W= 0& \hbox{ on } (0, +\infty)\times \Gamma_0,\\
\partial_\nu W+ \overline B_pW=  C_pDH & \hbox{ on }   (0, +\infty)\times\Gamma_1\end{array}
\right.
\end{equation}
with the initial condition
\begin{equation}\label{7.8} t=0:\quad W=C_p \widehat U_0, \quad
W'=C_p\widehat U_1\quad \hbox{in }  \Omega.\end{equation}

Since $B$ is similar to a symmetric matrix, by Proposition \ref{th7.2}, the  reduced matrix $\overline B_p$ is also similar to a symmetric matrix. Then by Proposition \ref{th5.1} and Remark \ref{th4.2aa}, the  reduced problem (\ref{7.7})-(\ref{7.8}) is well-posed in the space $(\mathcal H_0)^{N-p}\times (\mathcal H_{-1})^{N-p}$.

Accordingly, consider the reduced adjoint system
\begin{equation}\begin{cases}\Psi''-\Delta \Psi+ \overline A_p^T\Psi=0&\hbox{ in }  (0, +\infty)\times \Omega,\\
\Psi=0 & \hbox{ on }  (0, +\infty)\times \Gamma_0,\\
\partial_\nu \Psi+\overline B_p^T\Psi=0 &
\hbox{ on }   (0, +\infty)\times\Gamma_1\end{cases}\label{7.7b}\end{equation}
together with  the $C_pD$-observation
\begin{equation}(C_pD)^T\Psi\equiv 0\quad \hbox{ on } (0, T)\times \Gamma_1.\end{equation}

We have
\begin{proposition}   \label{th7.a1} Assume that $A$ and $ B$ satisfy the  conditions of $C_p$-compatibility (\ref{7.3}) and (\ref{7.5}), respectively.    Then system (\ref{4.1})  is  approximately  synchronizable by $p$-groups if and only if the reduced system (\ref{7.7}) is approximately  null controllable, or equivalently, if and only if the reduced adjoint system (\ref{7.7b})  is $C_pD$-observable.
\end{proposition}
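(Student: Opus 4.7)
The proposition asserts two equivalences, so the plan is to split the proof into two parts: first, the equivalence between approximate boundary synchronization by $p$-groups of \eqref{4.1} and approximate null controllability of the reduced system \eqref{7.7}; second, the equivalence between approximate null controllability of \eqref{7.7} and $C_pD$-observability of \eqref{7.7b}, which is nothing but Proposition \ref{th4.3} applied to the reduced system.

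For the first equivalence, the main idea is to identify $W_n=C_pU_n$ as the solution of the reduced system \eqref{7.7}. Under the $C_p$-compatibility conditions \eqref{7.3} and \eqref{7.5}, applying $C_p$ to \eqref{4.1}-\eqref{4.2} yields exactly the problem \eqref{7.7}-\eqref{7.8}, so $W_n=C_pU_n$ corresponds to the reduced initial data $(C_p\widehat U_0,C_p\widehat U_1)$ with the same boundary control $H_n$. The reformulation \eqref{7.2} of the synchronization-by-$p$-groups condition is precisely that $W_n\to 0$ in the relevant function space. In the ``if'' direction, given any $(\widehat U_0,\widehat U_1)$ in $(\mathcal H_0)^N\times(\mathcal H_{-1})^N$, the projected initial data $(C_p\widehat U_0,C_p\widehat U_1)\in(\mathcal H_0)^{N-p}\times(\mathcal H_{-1})^{N-p}$ is a legitimate initial state for \eqref{7.7}, and the controls provided by approximate null controllability of \eqref{7.7} drive $W_n=C_pU_n$ to zero, which is exactly \eqref{7.2}. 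In the ``only if'' direction, given arbitrary initial data $(\widehat W_0,\widehat W_1)$ for \eqref{7.7}, the surjectivity of the linear map $C_p$ (which has full row rank, and therefore admits a bounded right-inverse, e.g. $C_p^T(C_pC_p^T)^{-1}$) allows one to lift it to $(\widehat U_0,\widehat U_1)\in(\mathcal H_0)^N\times(\mathcal H_{-1})^N$ with $C_p\widehat U_0=\widehat W_0$ and $C_p\widehat U_1=\widehat W_1$; the synchronizing controls for \eqref{4.1} then drive $C_pU_n=W_n$ to zero, giving the approximate null controllability of \eqref{7.7}.

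For the second equivalence, the reduced system \eqref{7.7} has exactly the same structure as \eqref{4.1} with coupling matrices $\overline A_p$, $\overline B_p$ and boundary control matrix $C_pD$. By Proposition \ref{th7.2} combined with Remark \ref{th4.2aa}, $\overline B_p$ is similar to a symmetric matrix, so the reduced problem is well-posed in $(\mathcal H_0)^{N-p}\times(\mathcal H_{-1})^{N-p}$ by Proposition \ref{th5.1}. One may then invoke Proposition \ref{th4.3} for the reduced system to conclude that approximate null controllability of \eqref{7.7} is equivalent to $C_pD$-observability of \eqref{7.7b}.

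The only real subtlety, and the step to watch most carefully, is the surjectivity argument in the ``only if'' half of the first equivalence at the level of the dual space $(\mathcal H_{-1})^N$: one must verify that the algebraic right-inverse of $C_p$, which acts componentwise on $\mathbb R^N$-valued distributions, maps $(\mathcal H_{-1})^{N-p}$ continuously into $(\mathcal H_{-1})^N$. Since the action is by a constant matrix in the internal index and commutes with the spatial duality pairing, this is routine, but it must be checked to ensure that the two notions of well-posedness are genuinely compatible across the reduction.
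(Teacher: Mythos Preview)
Your proposal is correct and follows precisely the approach the paper has in mind. In fact, the paper gives no explicit proof of this proposition at all (nor of its $p=1$ analogue, Proposition~\ref{th6.a1}, which is prefaced by ``Obviously, we have''); it treats the statement as an immediate consequence of the definitions together with the reduction $W=C_pU$ and the duality result Proposition~\ref{th4.3}. Your write-up simply makes explicit the two ingredients the paper leaves implicit: the identification $W_n=C_pU_n$ under $C_p$-compatibility, and the surjectivity of $C_p$ (via the right inverse $C_p^T(C_pC_p^T)^{-1}$) needed to lift arbitrary reduced initial data; the second equivalence is exactly Proposition~\ref{th4.3} applied to \eqref{7.7}--\eqref{7.7b}, with well-posedness supplied by Proposition~\ref{th7.2} and Remark~\ref{th4.2aa}.
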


\begin{corollary}   \label{th10.q} Under the conditions of $C_p$-compatibility  (\ref{7.3}) and (\ref{7.5}), if  system  (\ref{4.1})   is  approximately  synchronizable by $p$-groups, we necessarily have the following  rank condition:
\begin{equation}\hbox{rank}(C_p\mathcal R)= N-p.\label{10.8f}\end{equation}
\end{corollary}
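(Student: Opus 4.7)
The plan is to reduce Corollary \ref{th10.q} to the controllability criterion already established in Corollary \ref{th4.4}, applied to the reduced system \eqref{7.7}. By Proposition \ref{th7.a1}, the approximate boundary synchronization by $p$-groups of \eqref{4.1} is equivalent to the approximate boundary null controllability of the reduced system \eqref{7.7}, whose state has $N-p$ components and whose coupling matrices are $\overline A_p$, $\overline B_p$ with boundary control matrix $C_pD$. Applying Corollary \ref{th4.4} to this reduced system yields
\begin{equation}
\hbox{rank}(\overline{\mathcal R}) = N-p,
\end{equation}
where $\overline{\mathcal R}$ is the composite matrix built, in the sense of \eqref{2.1}--\eqref{2.2}, from the data $(\overline A_p,\overline B_p, C_pD)$.

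The next step is to identify $\overline{\mathcal R}$ with $C_p\mathcal R$. The conditions of $C_p$-compatibility \eqref{7.3} and \eqref{7.5} can be read as intertwining relations $C_pA=\overline A_pC_p$ and $C_pB=\overline B_pC_p$. Iterating these, a straightforward induction on the length of the product shows that
\begin{equation}
C_p\,A^pB^q\cdots A^rB^s = \overline A_p^{\,p}\overline B_p^{\,q}\cdots\overline A_p^{\,r}\overline B_p^{\,s}\,C_p
\end{equation}
for all nonnegative integer exponents. Multiplying by $D$ on the right and recalling the definition \eqref{2.1}, we get $C_p\mathcal R_{(p,q,\cdots,r,s)} = \overline{\mathcal R}_{(p,q,\cdots,r,s)}$ for every admissible multi-index, so that the enlarged matrix \eqref{2.2} satisfies $C_p\mathcal R = \overline{\mathcal R}$.

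Combining the two steps gives $\hbox{rank}(C_p\mathcal R) = \hbox{rank}(\overline{\mathcal R}) = N-p$, which is precisely \eqref{10.8f}. The whole argument is structural: the only point worth verifying carefully is the intertwining identity for products, but this is an immediate induction from $C_pA=\overline A_pC_p$ and $C_pB=\overline B_pC_p$, with no essential difficulty. In particular, no new analytical input is needed beyond what is encoded in Propositions \ref{th7.a1} and Corollary \ref{th4.4}.
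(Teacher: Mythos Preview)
Your argument is correct and follows essentially the same route as the paper: reduce via Proposition \ref{th7.a1} to the approximate null controllability of the reduced system \eqref{7.7}, invoke Corollary \ref{th4.4} to get $\hbox{rank}(\overline{\mathcal R})=N-p$, and then identify $\overline{\mathcal R}=C_p\mathcal R$ using the intertwining relations \eqref{7.3}--\eqref{7.5}. If anything, your induction on the length of the word in $A,B$ makes the identity $C_p\mathcal R=\overline{\mathcal R}$ slightly more explicit than the paper's one-line computation.
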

\begin{proof} Let $\overline{ \mathcal R}$  be the matrix defined by  \eqref{2.1}-\eqref{2.2} corresponding to the reduced matrices $\overline A_p, \overline B_p$ and $\overline D=C_pD$.  Noting \eqref{7.3}  and  \eqref{7.5}, we have
\begin{equation}\overline A_p^r\overline B_p^s\overline D=\overline A_p^r\overline B_p^sC_pD = C_pA^rB^sD,\end{equation}
then
\begin{equation}\overline{ \mathcal R} = C_p\mathcal R.\label{bb}\end{equation}

Under the assumption  that system  (\ref{4.1})   is  approximately  synchronizable by $p$-groups, by Proposition \ref{th7.a1},   the reduced system (\ref{7.7}) is approximately null controllable, then by Corollary \ref{th4.4}, we have
rank $(\overline{ \mathcal R})= N-p$  which together with \eqref{bb}, implies \eqref{10.8f}.
  \end{proof}

\begin{proposition}  \label{th7.3} Assume that system (\ref{4.1}) is  approximately   synchronizable by $p$-groups. Then,  we necessarily have  $\hbox{rank}(\mathcal R)\geqslant N-p$.\end{proposition}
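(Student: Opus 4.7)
The statement to prove is a direct generalization, from $p=1$ to arbitrary $p$, of Theorem~\ref{th6.1}, and the plan is to mimic its contradiction argument while exploiting the fact that $C_p$ is a full row-rank $(N-p)\times N$ matrix (so $\hbox{dim Im}(C_p^T)=N-p$) together with the equivalent formulation \eqref{7.2} of approximate synchronization by $p$-groups.

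Suppose, for contradiction, that $\hbox{rank}(\mathcal R)<N-p$, i.e.\ $\hbox{dim Ker}(\mathcal R^T)=d>p$. Set $\hbox{Ker}(\mathcal R^T)=\hbox{Span}\{E_1,\dots,E_d\}$. Because
\begin{equation}
\hbox{dim Im}(C_p^T)+\hbox{dim Ker}(\mathcal R^T)=(N-p)+d>N,
\end{equation}
the two subspaces intersect nontrivially, so there exists a unit vector $E\in \hbox{Im}(C_p^T)\cap \hbox{Ker}(\mathcal R^T)$. Write $E=C_p^Tx$ with $x\in\mathbb R^{N-p}$ and expand $E=\sum_{r=1}^{d}\alpha_rE_r$ where the coefficients $\alpha_1,\dots,\alpha_d$ are not all zero.

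The assumed $p$-group approximate synchronization \eqref{7.2} of $\{U_n\}$ yields
\begin{equation}
(E,U_n)=(x,C_pU_n)\longrightarrow 0\quad\hbox{in }C^0_{loc}([T,+\infty);\mathcal H_0)\cap C^1_{loc}([T,+\infty);\mathcal H_{-1}).
\end{equation}
On the other hand, by Lemma~\ref{th2.1} the subspace $\hbox{Ker}(\mathcal R^T)$ is contained in $\hbox{Ker}(D^T)$ and invariant for both $A^T$ and $B^T$; exactly as in the derivation of \eqref{4.5}--\eqref{4.6}, applying each $E_r$ to problem \eqref{4.1}--\eqref{4.2} with $U=U_n$, $H=H_n$ and setting $u_r=(E_r,U_n)$ produces a closed homogeneous mixed problem whose boundary controls $H_n$ have been eliminated (since $D^TE_r=0$). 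Consequently each $u_r$ depends only on the initial data, not on $n$.

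From the expansion of $E$ we then get $\sum_{r=1}^{d}\alpha_r u_r(T)=0$ and $\sum_{r=1}^{d}\alpha_r u_r'(T)=0$; by the well-posedness of the corresponding reduced homogeneous system this forces
\begin{equation}
\sum_{r=1}^{d}\alpha_r(E_r,\widehat U_0)=\sum_{r=1}^{d}\alpha_r(E_r,\widehat U_1)=0
\end{equation}
for every choice of initial data $(\widehat U_0,\widehat U_1)\in(\mathcal H_0)^N\times(\mathcal H_{-1})^N$, whence $\sum_{r=1}^{d}\alpha_rE_r=0$. The linear independence of $E_1,\dots,E_d$ gives $\alpha_1=\cdots=\alpha_d=0$, the desired contradiction. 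The only nontrivial step is the dimension count that produces a vector simultaneously in $\hbox{Im}(C_p^T)$ and in $\hbox{Ker}(\mathcal R^T)$; once this is secured, the rest is a verbatim adaptation of the $p=1$ argument in Theorem~\ref{th6.1}, and no new difficulty is expected.
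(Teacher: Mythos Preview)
Your proof is correct and follows essentially the same approach as the paper's own argument: the dimension count $\hbox{dim Im}(C_p^T)+\hbox{dim Ker}(\mathcal R^T)>N$ to produce a common vector, projection onto the $E_r$'s to obtain a control-independent subsystem, and then the well-posedness/backward-uniqueness step to force the coefficients to vanish. The paper's proof of Proposition~\ref{th7.3} is exactly this, with only cosmetic differences in notation.
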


\begin{proof} Assume   $\hbox{dim Ker}(\mathcal R^T)=d$ with $d>p$.
Let   $\hbox{Ker}(\mathcal R^T) =\hbox{Span}\{E_1, \cdots, E_d\}$.
Since
$$\hbox{dim Ker}(\mathcal R^T) + \hbox{dim Im}(C_p^T) = d+N-p >N,$$
we have  $\hbox{Ker}(\mathcal R^T)\cap\hbox{Im}(C_p^T)\not =\{0\}$. Hence, there exists a non-zero vector $x\in \mathbb R^{N-d}$ and  coefficients $\beta_1,\cdots, \beta_d$ not all zero,  such that
\begin{equation}\sum_{r=1}^d\beta_rE_r= C_p^Tx.\label{7.9}
\end{equation}
Moreover, by Lemma \ref{th2.1}, we still  have  \eqref{3.6a} and  \eqref{3.6}. Then, applying $E_r$ to problem (\ref{4.1})-(\ref{4.2}) with $U=U_n$ and $H=H_n$ and setting
$u_r=(E_r, U_n) $ for $ 1\leqslant r\leqslant d,$
 it follows that
\begin{equation}\left\lbrace
\begin{array}{ll}u_r'' -\Delta u_r + \sum_{s=1}^d\alpha_{rs}u_s
 =0&\hbox{ in }  (0, +\infty)\times\Omega,\\
u_r =0 & \hbox{ on }(0, +\infty)\times \Gamma_0,\\
\partial_\nu u_r +\sum_{s=1}^d\beta_{rs}u_s =0 & \hbox{ on }(0, +\infty)\times\Gamma_1\end{array}
\right.\label{7.10}
\end{equation}with the initial condition
\begin{equation}t=0:\quad  u_r=(E_r,   \widehat U_0),  \
u_r'=(E_r,  \widehat U_1)\quad \hbox{in } \Omega.\label{7.11} \end{equation}
Noting (\ref{7.2}), it follows from  (\ref{7.9}) that
\begin{equation}\sum_{r=1}^d\beta_ru_r=(x, C_pU_n) \rightarrow 0  \quad \hbox{ in  }C^0_{loc}([T,+\infty);  \mathcal H_{0})\cap C^1_{loc}([T,+\infty);  \mathcal H_{-1})\end{equation}
as $n\rightarrow +\infty.$
Since problem \eqref{7.10}-\eqref{7.11} is independent of $n$, so is the solution $(u_1,\cdots, u_d)$.  It follows that
\begin{equation}\sum_{r=1}^d\beta_ru_r(T)=\sum_{r=1}^d\beta_ru_r'(T)=0\quad \hbox{ in }  \Omega.\end{equation}
Then,  it follows from the well-posedness of problem (\ref{7.10})-(\ref{7.11}) that
\begin{equation} \sum_{r=1}^d\beta_r(E_r, \widehat U_0)=\sum_{r=1}^d\beta_r(E_r, \widehat U_1) =  0\end{equation}
for any given  initial data $(\widehat U_0,\widehat  U_1)\in (\mathcal H_0)^N\times(\mathcal H_{-1})^N $. In particular, we get
\begin{equation} \sum_{r=1}^d\beta_rE_r= 0,\end{equation}
then a contradiction: $\beta_1=\cdots=\beta_d =0$,  because of the linear independence of the vectors $E_1,\cdots, E_d.$
The proof is achieved.
\end{proof}

\begin{theoreme} \label{th7.4} Let  $A$ and $ B$ satisfy the  conditions of $C_p$-compatibility (\ref{7.3}) and (\ref{7.5}), respectively.
Assume that $A^T$ and $B^T$ admit a common invariant subspace $V,$ which is bi-orthonormal to $Ker(C_p)$.
Then, setting  the boundary control matrix  $D$ by
 \begin{equation}\hbox{Im}(D)= V^\perp,\end{equation}
 system (\ref{4.1}) is  approximately
synchronizable by $p$-groups.  Moreover, we have $\hbox{rank}(\mathcal R)= N-p$.
\end{theoreme}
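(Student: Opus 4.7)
The plan is to transfer the claim to the reduced system (\ref{7.7}) via Proposition \ref{th7.a1} and to check that the reduced control matrix $\overline{D}=C_pD$ is of full column-rank, so that Holmgren's uniqueness theorem applies directly to the reduced adjoint system.

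First, choose a basis $\{E_1,\dots,E_p\}$ of $V$ that is bi-orthonormal to $\{e_1,\dots,e_p\}$, i.e.\ $(E_i,e_j)=\delta_{ij}$. Since $\hbox{Im}(D)=V^\perp$, we have $\hbox{Ker}(D^T)=V$ and $\hbox{rank}(D)=N-p$. I would next check that $\hbox{Ker}(C_p)\cap \hbox{Im}(D)=\{0\}$: if $\sum_j a_j e_j \in V^\perp$, then pairing with $E_i$ gives $a_i=(E_i,\sum_j a_j e_j)=0$ for each $i$. By Lemma~2.2 of \cite{Cocv2017} (exactly as used in the proof of Theorem~\ref{th6.3}), this yields
\begin{equation}
\hbox{rank}(C_pD)=\hbox{rank}(D)=N-p,
\end{equation}
so $\overline D=C_pD$ has full column rank $N-p$, equivalently $\hbox{Ker}(\overline D^T)=\{0\}$.

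Next, since $B$ is similar to a symmetric matrix, Proposition \ref{th7.2} ensures that $\overline B_p$ is also similar to a symmetric matrix, and the reduced problem (\ref{7.7})-(\ref{7.8}) is well-posed. The $C_pD$-observation $(C_pD)^T\Psi\equiv 0$ on $[0,T]\times\Gamma_1$ therefore forces $\Psi\equiv 0$ on $[0,T]\times\Gamma_1$, and via the boundary condition $\partial_\nu\Psi=-\overline B_p^T\Psi$ also $\partial_\nu\Psi\equiv 0$ on $[0,T]\times\Gamma_1$. By Holmgren's uniqueness theorem, for $T>0$ large enough, this yields $\Psi\equiv 0$ everywhere, so the reduced adjoint system (\ref{7.7b}) is $C_pD$-observable. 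Proposition \ref{th7.a1} then gives the approximate boundary synchronization by $p$-groups of system (\ref{4.1}).

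For the rank assertion, the invariance hypothesis combined with $V\subseteq \hbox{Ker}(D^T)$ places $V$ among the subspaces described in Lemma~\ref{th2.1}, hence $V\subseteq \hbox{Ker}(\mathcal R^T)$, so $\hbox{rank}(\mathcal R)\leqslant N-p$. Conversely, $\hbox{Im}(D)\subseteq \hbox{Im}(\mathcal R)$ gives $\hbox{rank}(\mathcal R)\geqslant \hbox{rank}(D)=N-p$, and equality follows. The only delicate point is the appeal to Holmgren's theorem, which requires $T>0$ large enough; apart from this, everything reduces to linear-algebraic bookkeeping on the bi-orthonormal pair $(V,\hbox{Ker}(C_p))$ and the two-line reduction already justified by Proposition \ref{th7.a1}.
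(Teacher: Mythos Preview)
Your proof is correct and follows essentially the same approach as the paper: both reduce to showing $\hbox{rank}(C_pD)=N-p$ via the bi-orthonormality of $V$ and $\hbox{Ker}(C_p)$ together with Lemma~2.2 of \cite{Cocv2017}, apply Holmgren's theorem to the reduced adjoint system, and invoke Proposition~\ref{th7.a1}. Your argument is in fact slightly more explicit than the paper's in verifying $\hbox{Ker}(C_p)\cap\hbox{Im}(D)=\{0\}$ and in splitting the rank equality into two inequalities, but the substance is the same.
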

\begin{proof}

Since $V$ is bi-orthonormal to $\hbox{Ker}(C_p)$, we have
\begin{equation}\hbox{Ker}(C_p)\cap V^\perp = \hbox{Ker}(C_p)\cap \hbox{Im}(D)
=\{0\},\end{equation}  therefore, by Lemma 2.2 in \cite {Cocv2017}, we have
\begin{equation}\hbox{rank}(C_pD) = \hbox{rank}(D)=N-p.\end{equation}
Thus,  the $C_pD$-observation (\ref{4.1b}) becomes the full observation
\begin{equation}\Psi\equiv 0\quad \hbox{ on } (0, T)\times \Gamma_1.\end{equation}
By  Holmgren's uniqueness theorem, the reduced adjoint system \eqref{7.7b} is  observable and the reduced system \eqref{7.7}  is approximately null controllable. Then, by Proposition \ref {th7.a1}, the original system (\ref{4.1}) is  approximately
synchronizable by $p$-groups.  Noting that $\hbox{Ker}(D^T) = V$, by Lemma \ref{th2.1}, it is easy to see that  $\hbox{rank}(\mathcal R)= N-p$. The proof is then complete.
\end{proof}

\begin{theoreme} \label{th7.5}    Assume that  system   (\ref{4.1}) is approximately
synchronizable by $p$-groups.  Assume furthermore that  $\hbox{rank}(\mathcal R) =N-p$. Then, we have the following assertions:

(i) $\hbox{Ker}(\mathcal R^T)$ is bi-orthonormal  to $\hbox{Ker}(C_p)$.

(ii) For any given initial data  $(\widehat U_0, \widehat U_1)\in
(\mathcal H_0)^N\times (\mathcal H_{-1})^N$, there exist  unique   scalar  functions $u_1, u_2,\cdots, u_p$
such that
\begin{equation}u^{(k)}_n\rightarrow u_r \quad \hbox{ in }  C^0_{loc}([T,+\infty);  \mathcal H_0)\cap  C^1_{loc}([T,+\infty);
H^{-1}(\Omega))\label{7.a4}
\end{equation}  for $n_{r-1}+1\leqslant k\leqslant n_r \hbox{ and }
1\leqslant r\leqslant p$ as $n\rightarrow +\infty.$

(iii) The coupling matrices $A$ and $B$ satisfy the  conditions of $C_p$-compatibility (\ref{7.3}) and (\ref{7.5}), respectively.
\end{theoreme}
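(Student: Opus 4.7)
The plan is to follow the strategy of Theorem \ref{th6.2} (the case $p=1$) and extend it to $p$-groups, proving (i)--(iii) in order. Throughout I set $\hbox{Ker}(\mathcal R^T) = \hbox{Span}\{E_1, \ldots, E_p\}$; by Lemma \ref{th2.1} this space is contained in $\hbox{Ker}(D^T)$ and invariant for $A^T, B^T$, so applying $E_r$ to problem \eqref{4.1}--\eqref{4.2} produces the closed system \eqref{4.5}--\eqref{4.6} for $(E_r, U_n)$, which does not involve the control $H_n$ and is therefore $n$-independent.

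For part (i), the essential step is to show $\hbox{Ker}(\mathcal R^T) \cap \hbox{Im}(C_p^T) = \{0\}$. If instead $\sum_r \alpha_r E_r = C_p^T x$ with $(\alpha_r)$ not all zero, synchronization \eqref{7.2} gives $\sum_r \alpha_r (E_r, U_n) = (x, C_p U_n) \to 0$, yet the left-hand side is $n$-independent and must therefore vanish identically; well-posedness and the arbitrariness of $(\widehat U_0, \widehat U_1)$ then force $\sum_r \alpha_r E_r = 0$, a contradiction. Since $\dim \hbox{Ker}(\mathcal R^T) = p = \dim \hbox{Ker}(C_p)$, the $p\times p$ matrix $M = ((E_i, e_j))$ is invertible, and the change of basis $E'_r = \sum_s (M^{-1})_{rs} E_s$ yields the bi-orthonormality $(E'_r, e_s) = \delta_{rs}$.

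For part (ii), using this bi-orthonormal basis I stack $C_p$ with the row vectors $(E'_r)^T$ to build an invertible $N\times N$ matrix $\Pi$ satisfying $\Pi e_s = (0,\ldots,0,\delta_{1s},\ldots,\delta_{ps})^T$. Setting $u_r := (E'_r, U_n)$ (which is $n$-independent), synchronization gives $\Pi U_n \to (0,u_1,\ldots,u_p)^T$, and inversion yields $U_n \to \sum_r u_r e_r$ in the required topology; reading off components gives \eqref{7.a4}.

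For part (iii), since $H_n$ has compact support in $[0,T]$, passing to the limit on $[T,+\infty)$ shows that $\bar U := \sum_r u_r e_r$ satisfies the full homogeneous system, which upon substitution and use of \eqref{7.10} collapses to
\[
\sum_s u_s \bigl(Ae_s - \sum_r \alpha_{rs} e_r\bigr) = 0 \text{ in } \Omega\times(T,+\infty),\qquad \sum_s u_s \bigl(Be_s - \sum_r \beta_{rs} e_r\bigr) = 0 \text{ on } \Gamma_1\times(T,+\infty),
\]
for every initial data. The interior identity yields the $C_p$-compatibility of $A$: by reversibility of \eqref{7.10} and the freedom to prescribe $u_r(0)=\phi_r$ arbitrarily (choose $\widehat U_0=\sum_r \phi_r e_r$ via bi-orthonormality), the $p$-tuple $(u_s(\cdot,T))_s$ can attain any value in $(\mathcal H_0)^p$, forcing $Ae_s=\sum_r\alpha_{rs}e_r\in\hbox{Ker}(C_p)$, i.e.\ \eqref{7.3}. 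The main obstacle is the parallel argument on the boundary: reversibility at $t=T$ does not directly control boundary traces, so deducing $Be_s=\sum_r\beta_{rs}e_r$ requires showing that $\sum_s c_s u_s\equiv 0$ on $\Gamma_1\times(T,+\infty)$ for all initial data forces $c=0$. I plan to handle this by a Holmgren-type unique continuation for the scalar combination $v=\sum_s c_s u_s$: the Robin condition expresses $\partial_\nu v|_{\Gamma_1}$ in terms of $u_q|_{\Gamma_1}$, and combining this with $v|_{\Gamma_1}=0$ plus iteration on $\beta^T$ should yield vanishing Cauchy data for a non-trivial scalar combination, forcing $v\equiv 0$ in $\Omega$, whereupon the interior surjectivity already established gives $c=0$.
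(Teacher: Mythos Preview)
Your arguments for (i), (ii), and the interior half of (iii) are correct and match the paper's proof essentially line for line. The gap is in the boundary half of (iii).

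The Holmgren-type iteration you sketch is neither needed nor clearly workable. The scalar combination $v=\sum_s c_s u_s$ does not satisfy a closed wave equation (the interior coupling through $\alpha$ prevents this), so there is no single equation to which Holmgren applies; and from $c^Tu|_{\Gamma_1}=0$ alone you cannot infer $(\beta^Tc)^Tu|_{\Gamma_1}=0$, so the ``iteration on $\beta^T$'' does not get off the ground. The perceived obstacle --- that surjectivity at $t=T$ does not control boundary traces --- is an artefact of working in $(\mathcal H_0)^p$, where traces are indeed undefined.

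The fix, which is what the paper does, is simply to raise the regularity by one notch: restrict attention to initial data $(\widehat U_0,\widehat U_1)\in(\mathcal H_1)^N\times(\mathcal H_0)^N$, so that the projected problem \eqref{7.14}--\eqref{7.15} is well-posed in $(\mathcal H_1)^p\times(\mathcal H_0)^p$. Time-reversibility plus the surjectivity of $(\widehat U_0,\widehat U_1)\mapsto((E_r,\widehat U_0),(E_r,\widehat U_1))_r$ then show that $(u_1(T),\dots,u_p(T))$ ranges over all of $(\mathcal H_1)^p$. In particular one can choose a single initial datum for which $u_1(T),\dots,u_p(T)$ are linearly independent \emph{as $H^1$ functions with linearly independent traces on $\Gamma_1$}; that one choice kills both identities at once and yields $C_pAe_r=0$ and $C_pBe_r=0$. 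No unique continuation is required.
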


\begin{proof}  (i) We   claim that $\hbox{Ker}(\mathcal R^T)\cap \hbox{Im}(C_p^T) =\{0\}$. Then,  noting that  $\hbox{Ker}(\mathcal R^T)$ and Ker$(C_p)$ have the same dimension $p$ and
\begin{equation} \hbox{Ker}(\mathcal R^T)\cap \{\hbox{Ker}(C_p)\}^\perp = \hbox{Ker}(\mathcal R^T)\cap \hbox{Im}(C_p^T) =\{0\},\end{equation}
 by  Proposition 4.1 in \cite{JMPA2016},  $\hbox{Ker}(\mathcal R^T)$ and Ker$(C_p)$ are bi-orthonormal.
Then, let $\hbox{Ker}(\mathcal R^T) = \hbox{Span}\{E_1, \cdots,  E_p\}$ and $\hbox{Ker}(C_p) = \hbox{Span}\{e_1, \cdots,  e_p\}$ such that
\begin{equation} (E_r, e_s) =\delta_{rs}, \quad r, s=1,\cdots, p.\label{7.12} \end{equation}

Now we return to check  that  $\hbox{Ker}(\mathcal R^T)\cap \hbox{Im}(C_p^T) =\{0\}$.   If $\hbox{Ker}(\mathcal R^T)\cap \hbox{Im}(C_p^T) \not =\{0\}$, there exist a non-zero vector $x \in \mathbb R^{N-p}$  and  some coefficients $\beta_1,\cdots,\beta_p$ not all zero, such that
\begin{equation}\sum_{r=1}^p\beta_rE_r= C_p^Tx.\label{7.13}
\end{equation}
By Lemma \ref{th2.1}, we still have \eqref{3.6a} and  \eqref{3.6} with $d=p$.
For $1\leqslant r\leqslant p$, applying $E_r$  to problem (\ref{4.1})-(\ref{4.2}) with $U= U_n$ and $H= H_n$, and  setting
\begin{equation} u_r= (E_r, U),\label{u_r}\end{equation}  it follows that
\begin{equation}\label{7.14} \left\lbrace
\begin{array}{ll}u_r'' -\Delta u_r +  \sum_{s=1}^p\alpha_{rs}u_s
 =0&\hbox{ in } (0, +\infty)\times\Omega,\cr
 u_r =0 &\hbox{ on }  (0, +\infty)\times\Gamma_0,\cr
 \partial_\nu u_r +  \sum_{s=1}^p\beta_{rs}u_s =0&\hbox{ on }(0, +\infty)\times \Gamma_1\end{array}
\right.
\end{equation}
with the initial condition
\begin{equation}t=0:\quad u_r = (E_r, \widehat U_0), \  u_r '=   (E_r, \widehat U_1).\label{7.15}
\end{equation}

Noting (\ref{7.2}), we have
\begin{equation}\sum_{r=1}^p\beta_ru_r =(x, C_pU_n) \rightarrow 0  \hbox{ in }  C^0_{loc}([T,+\infty);  \mathcal H_{0})\cap C^1_{loc}([T,+\infty);  \mathcal H_{-1})
\end{equation}
AS $n\rightarrow +\infty.$

Since the functions $u_1,\cdots, u_p$ are independent of $n$ and of  the applied boundary controls,  we have
\begin{equation} \sum_{r=1}^p\beta_ru_r(T)\equiv \sum_{r=1}^p\beta_ru_r'(T)\equiv0\quad \hbox{ in } \Omega.\end{equation}
Then,  it follows from the well-posedness of problem (\ref{7.14})-(\ref{7.15}) that
\begin{equation} \sum_{r=1}^p\beta_r(E_r, \widehat U_0) = \sum_{r=1}^p\beta_r(E_r, \widehat U_1)=0\quad \hbox{in } \Omega\end{equation}
for any given initial data $(\widehat U_0, \widehat U_1)\in (\mathcal H_0)^N\times(\mathcal H_{-1})^N $. In particular, we get
\begin{equation} \sum_{r=1}^p\beta_rE_r= 0,\end{equation}
then,  a contradiction: $\beta_1=\cdots=\beta_p =0$,  because of the linear independence of the vectors $E_1,\cdots, E_p.$

(ii) Noting  (\ref{7.2}), we have\begin{equation} \begin{pmatrix} C_p\\ E_1^T\\\cdot\\E_p^T \end{pmatrix}U_n
= \begin{pmatrix} C_pU_n \\ E_1^TU_n \\\cdot\\E_p^TU_n \end{pmatrix}\rightarrow \begin{pmatrix} 0\\ u_1\\\cdot\\ u_p \end{pmatrix} \label{7.16} \end{equation}
as $ n\rightarrow +\infty$ in the space  \begin{equation}(C^0_{loc}([T,+\infty);  \mathcal H_0))^N\cap  (C^1_{loc}([T,+\infty);  \mathcal H_{-1}))^N,  \label{7.a3}\end{equation} where $u_1, \cdots, u_p$ are given by \eqref{7.14}. Since $\hbox{Ker}(\mathcal R^T)\cap \hbox{Im}(C_p^T) =\{0\},$ the matrix
$\begin{pmatrix} C_p\\ E_1^T\\\cdot\\E_p^T \end{pmatrix}$ is invertible. Thus it follows from (\ref{7.16}) that there exists   $U$ such that
\begin{equation} U_n\rightarrow \begin{pmatrix} C_p\\ E_1^T\\\cdot\\E_p^T \end{pmatrix}^{-1}\begin{pmatrix} 0\\ u_1\\\cdot\\ u_p \end{pmatrix} =:U\label{7.17} \end{equation}
as $n\rightarrow +\infty$ in the space \eqref{7.a3}.
Moreover,  \eqref{7.2} implies that
$$t\geqslant T:\quad C_pU\equiv 0\quad \hbox{in } \Omega.$$
Noting   \eqref{e_p}, \eqref{7.12}  and \eqref{u_r}, it  follows that
\begin{equation}\label{7.19} t\geqslant T:\quad U = \sum_{r=1}^p(E_r, U)e_r = \sum_{r=1}^pu_re_r\quad \hbox{in } \Omega.\end{equation}
Noting \eqref{7.1}, we get then  \eqref{7.a4}.

(iii) Applying $C_p$ to  system (\ref{4.1}) with $U=U_n$ and $H=H_n$,
and passing to the limit as $n\rightarrow +\infty$,  by (\ref{7.2}), (\ref{7.17}) and (\ref{7.19}), it is easy to get
that \begin{equation} \sum_{r=1}^pC_pAe_ru_r(T)\equiv 0\quad \hbox{in }  \Omega\label{1} \end{equation}
and \begin{equation} \sum_{r=1}^pC_pBe_ru_r(T)\equiv 0\quad \hbox{on }   \Gamma_1.\label{2}  \end{equation}
Since system \eqref{7.14} is well-posed   in $(\mathcal H_1)^p\times (\mathcal H_0)^p $ and   time-invertible, so it  defines  an isomorphism  from
$(\mathcal H_1)^p\times (\mathcal H_0)^p $ onto $(\mathcal H_1)^p\times (\mathcal H_0)^p $. On the other hand,  the mapping 
\begin{equation} (U_0, U_1) \rightarrow ((E_r, \widehat U_0),   (E_r, \widehat U_1))_{1\leqslant r \leqslant p}\end{equation}
is surjective from $(\mathcal H_1)^N\times (\mathcal H_0)^N $ onto 
$(\mathcal H_1)^p\times (\mathcal H_0)^p $. Then, $(u_1, \cdots, u_p)$ will fulfil the space $(\mathcal H_1)^p\times (\mathcal H_0)^p $ as the initial data $(U_0, U_1)$ runs through the space $(\mathcal H_1)^N\times (\mathcal H_0)^N $.
There exist thus an initial date  $(U_0, U_1)\in (\mathcal H_1)^N\times (\mathcal H_0)^N$ such that the corresponding $(u_1(T), \cdots, u_p(T))$ are linearly independent.  Then, it follows from \eqref{1} and \eqref{2}  that
\begin{equation} C_pAe_r=0 \quad\hbox{and} \quad C_pBe_r=0\quad \hbox{ for } 1\leqslant r\leqslant p. \end{equation} We get thus the conditions of $C_p$-compatibility for $A$ and $B$,  respectively. The proof is  complete.
\end{proof}

\begin{rem} The convergence \eqref{7.a4}  will  be   called  the approximate boundary   synchronization by $p$-groups in the pinning sense, and   $(u_1, \cdots, u_p)^T$  will be called the approximately  synchronizable state by $p$-groups. 
While the convergence \eqref{7.2}  given by  Definition \ref{th7.1} will be  called the approximate boundary synchronization by $p$-groups in the consensus  sense.

In general,  the convergence (\ref{7.2})  does not imply the convergence \eqref{7.a4}.  In fact, we even don't know if  the sequence $\{U_n\}$ is bounded. However, under the rank condition  $\hbox{rank}(\mathcal R) =N-p$,  the convergence (\ref{7.2})
actually implies the convergence \eqref{7.a4}.   Moreover,  the fonctions $u_1, \cdots, u_p$  are independent of applied boundary controls. 
\end{rem}

Let $\mathbb D_p$ be the  set of all the boundary control matrices $D$ which realize the approximate boundary synchronization by $p$-groups  for  system (\ref{4.1}).
In order to show  the  dependence on $D$,  we prefer to write
$\mathcal R_D$ instead of $\mathcal R$  in \eqref{2.2}. Then, we may define the minimal rank as
\begin{equation}N_p =  \inf_{D\in \mathbb D_p}\hbox{rank} (\mathcal R_D).\end{equation}
Noting that $\hbox{rank}(\mathcal R_D)= N-\hbox{dim Ker}(\mathcal R_D^T)$, because of Proposition \ref{th7.3}, we have
\begin{equation}N_p\geqslant N-p.\label{7.20} \end{equation}
Moreover, we have  the following

\begin{corollary} The equality
\begin{equation}N_p=N-p\label{7.21} \end{equation}
holds  if and only if the coupling matrices $A$ and $B$ satisfy the  conditions of $C_p$-compatibility (\ref{7.3}) and (\ref{7.5}), respectively and   $A^T, B^T$  possess a common invariant  subspace,  which  is bi-orthonormal  to  Ker$(C_p)$. Moreover,
the approximate  synchronization is  in the pinning sense.
\end{corollary}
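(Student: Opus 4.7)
The plan is to establish both directions as essentially a direct packaging of Theorem \ref{th7.4} (sufficiency) and Theorem \ref{th7.5} (necessity), combined with the general lower bound \eqref{7.20}.

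For the direction $(\Leftarrow)$, I would start from the assumption that $A, B$ satisfy the $C_p$-compatibility conditions \eqref{7.3} and \eqref{7.5}, and that $A^T, B^T$ admit a common invariant subspace $V$ that is bi-orthonormal to $\mathrm{Ker}(C_p)$. I then choose the boundary control matrix $D$ with $\mathrm{Im}(D) = V^\perp$, which is exactly the hypothesis of Theorem \ref{th7.4}. That theorem immediately gives the approximate boundary synchronization by $p$-groups (so $D \in \mathbb D_p$) and the identity $\mathrm{rank}(\mathcal R_D) = N-p$. Consequently $N_p \leqslant N-p$, and combined with \eqref{7.20} we conclude $N_p = N-p$.

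For the direction $(\Rightarrow)$, I would use the definition of $N_p$ as an infimum together with the fact that rank takes only integer values between $0$ and $N$: since $N_p \geqslant N-p$ by Proposition \ref{th7.3} (applied to each $D \in \mathbb D_p$), the equality $N_p = N-p$ forces the existence of some $D \in \mathbb D_p$ that achieves $\mathrm{rank}(\mathcal R_D) = N-p$. For this particular $D$, system \eqref{4.1} is approximately synchronizable by $p$-groups with $\mathrm{rank}(\mathcal R) = N-p$, so Theorem \ref{th7.5} applies. Part (iii) of that theorem delivers the $C_p$-compatibility of $A$ and $B$, while part (i) tells us that $\mathrm{Ker}(\mathcal R^T)$ is bi-orthonormal to $\mathrm{Ker}(C_p)$; since by Lemma \ref{th2.1} the subspace $\mathrm{Ker}(\mathcal R^T)$ is invariant for both $A^T$ and $B^T$, taking $V = \mathrm{Ker}(\mathcal R^T)$ produces exactly the common invariant subspace required. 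Finally, part (ii) of Theorem \ref{th7.5} yields \eqref{7.a4}, which is precisely the pinning-sense convergence.

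There is no substantial obstacle here, as the work has already been done in Theorems \ref{th7.4} and \ref{th7.5}; the only delicate point is to notice that the infimum in the definition of $N_p$ is attained (guaranteed by the integrality of the rank and the lower bound \eqref{7.20}), so that the hypotheses of Theorem \ref{th7.5} are available for at least one witnessing $D$. This lets us transfer the ``for some $D$'' conclusion of necessity into structural algebraic conditions on $A$ and $B$ that are intrinsic (independent of $D$), which is what the statement asserts.
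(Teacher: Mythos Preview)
Your proposal is correct and follows essentially the same route as the paper: Theorem \ref{th7.4} for sufficiency and Theorem \ref{th7.5} (together with Lemma \ref{th2.1}) for necessity, combined with the lower bound \eqref{7.20}. Your explicit remark that the integer-valued rank forces the infimum $N_p$ to be attained is a welcome clarification that the paper leaves implicit.
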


\begin{proof}   Assume that  \eqref{7.21}  holds. Then there exists a matrix $D\in\mathbb D_p$, such that
$\hbox{dim Ker}(\mathcal R_D^T)=p$. By  Theorem  \ref{th7.5},  the coupling matrices $A$ and $B$ satisfy the  conditions of $C_p$-compatibility (\ref{7.3}) and (\ref{7.5}), respectively,  and  $\hbox{Ker}(\mathcal R_D^T)$ which,  by Lemma \ref{th2.1},   is  bi-orthonormal  to  Ker$(C_p)$, is invariant  for both  $A^T$ and $B^T$. Moreover, the approximate  synchronization is  in the pinning sense.

Conversely,   let  $V$ be a subspace, which is invariant for both $A^T$ and $B^T$,  and bi-orthonormal  to  Ker$(C_p)$. Noting that $A$ and $B$ satisfy the  conditions of $C_p$-compatibility (\ref{7.3}) and (\ref{7.5}),  respectively, by Theorem \ref {th7.4},
there exists a matrix $D\in \mathbb D_p$,  such that   $\hbox{dim Ker}(\mathcal R_D^T)=p$, which together with \eqref{7.20}  implies \eqref{7.21}. \end{proof}

\begin{rem} If $N_p>N-p$, then the situation is more complicated. We don't know if  the conditions of $C_p$-compatibility (\ref{7.3}) and (\ref{7.5}) are necessary, either if the approximate boundary  synchronization by $p$-groups is  in the pinning sense.
\end{rem}

\vskip 1cm

\section{Approximately  synchronizable state by $p$-groups}

In  Theorem \ref{th7.5},  we have shown that if system \eqref{4.1} is approximately synchronizable by $p$-groups under the condition $\hbox{dim Ker}(\mathcal R^T) = p$,
then $A$ and $ B$ satisfy the corresponding conditions of $C_p$-compatibility,  and $\hbox{Ker}(\mathcal R^T)$ is bi-orthonormal to Ker$(C_p)$, moreover,
the  approximately  synchronizable state by $p$-groups   is  independent of the applied boundary controls. The following is the  counterpart.

\begin{theoreme}   \label{th8.1} Let   $A$ and $ B$ satisfy  the  conditions of $C_p$-compatibility (\ref{7.3}) and (\ref{7.5}), respectively.  Assume that   system (\ref{4.1})  is   approximately  synchronizable by $p$-groups.  If the projection of  any solution $U$ to  problem \eqref{4.1}-\eqref{4.2}  on a subspace $V$ of dimension $p$ is independent of applied boundary controls, then
$V=\hbox{Ker}(\mathcal R^T)$. Moreover,   $\hbox{Ker}(\mathcal R^T)$ is  bi-orthonormal  to  $\hbox{Ker}(C_p)$.\end{theoreme}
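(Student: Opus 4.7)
The plan is to show $V \subseteq \hbox{Ker}(D^T)$ and that $V$ is invariant under both $A^T$ and $B^T$. Lemma \ref{th2.1} then gives $V \subseteq \hbox{Ker}(\mathcal R^T)$, while Proposition \ref{th7.3} supplies the matching upper bound $\hbox{dim Ker}(\mathcal R^T) \leqslant p$; since $\dim V = p$, the two subspaces must coincide: $V = \hbox{Ker}(\mathcal R^T)$. Because this also yields $\hbox{rank}(\mathcal R) = N-p$, Theorem \ref{th7.5}(i) immediately delivers the bi-orthonormality of $\hbox{Ker}(\mathcal R^T)$ with $\hbox{Ker}(C_p)$.

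To implement the first step, pick $F \in V$ and set $u = (F, U)$. Pairing (\ref{4.1}) against $F$ gives
\begin{equation*}
u'' - \Delta u + (A^T F, U) = 0 \hbox{ in } \Omega, \quad u = 0 \hbox{ on } \Gamma_0, \quad \partial_\nu u + (B^T F, U) = (D^T F, H) \hbox{ on } \Gamma_1.
\end{equation*}
Since $u$ is independent of $H$ by hypothesis, the PDE yields $(A^T F, U) = -u'' + \Delta u$ independent of $H$ as well, so $A^T F$ inherits the projection-independence property. For the boundary relation, I would compare two controls $H_1, H_2$ sharing the same initial data: the difference $\widetilde U = U_{H_1} - U_{H_2}$ has zero initial data, satisfies $(F, \widetilde U) \equiv 0$ in $\Omega$, and reduces the boundary identity to $(D^T F, H_1 - H_2) = -(B^T F, \widetilde U)|_{\Gamma_1}$. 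Choosing smooth $H_1, H_2$ with arbitrary $H_1(0^+) - H_2(0^+) = h \in \mathbb R^M$, letting $t \to 0^+$ so that both $\widetilde U$ and its boundary trace vanish, and varying $h$ over $\mathbb R^M$ force $D^T F = 0$. Hence $V \subseteq \hbox{Ker}(D^T)$.

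To upgrade this to the full $A^T$- and $B^T$-invariance of $V$, let $\widetilde V$ denote the smallest subspace containing $V$ and invariant under both $A^T$ and $B^T$. I would iterate the above trace argument for every generator $G = P(A^T, B^T) F$ with $F \in V$ and $P$ a noncommutative monomial: for zero initial data, $(G, \widetilde U)$ and its normal trace on $\Gamma_1$ both vanish as $t \to 0^+$, and inserting this into the boundary relation for $(G, \widetilde U)$ successively forces $D^T G = 0$ at each step. This gives $\widetilde V \subseteq \hbox{Ker}(D^T)$, hence by Lemma \ref{th2.1} also $\widetilde V \subseteq \hbox{Ker}(\mathcal R^T)$. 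Combined with $V \subseteq \widetilde V$, $\dim V = p$, and the bound $\hbox{dim Ker}(\mathcal R^T) \leqslant p$ from Proposition \ref{th7.3}, all three subspaces collapse to $V = \widetilde V = \hbox{Ker}(\mathcal R^T)$, completing the proof.

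The main obstacle is the rigorous $t \to 0^+$ trace analysis for the weak solutions of Proposition \ref{th5.1}, since $\widetilde U(t) \in (L^2(\Omega))^N$ carries no intrinsic boundary trace. I would handle this by approximating $H \in \mathcal L^M$ by smooth compatible controls for which the boundary identity holds classically, and passing to the limit via the continuous dependence of Proposition \ref{th5.1}; alternatively, one can invoke the sharp regularity estimates for the wave equation with Neumann/Robin boundary data of Lasiecka and Triggiani \cite{Lasciecka1, Lasciecka2}, already used elsewhere in \S8 of the paper.
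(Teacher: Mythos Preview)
Your high-level plan---show $V\subseteq\hbox{Ker}(D^T)$ and $V$ invariant for $A^T,B^T$, then invoke Lemma \ref{th2.1}, Proposition \ref{th7.3} and Theorem \ref{th7.5}(i)---is exactly the paper's strategy. The gap is in the execution of the $B^T$-invariance step, where your iteration breaks down.

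For $F\in V$ you correctly obtain $(F,\widetilde U)\equiv 0$ in $(0,\infty)\times\Omega$, hence $\partial_\nu(F,\widetilde U)\equiv 0$ on $\Gamma_1$, and the boundary condition reduces to $(D^TF,H_1-H_2)=(B^TF,\widetilde U)|_{\Gamma_1}$. From this one can indeed force $D^TF=0$ (the paper does it via the Lasiecka--Triggiani trace regularity and a compactness argument, which you mention as the fallback; your $t\to 0^+$ limit is more delicate than it looks because a control with $h=(H_1-H_2)(0^+)\neq 0$ is incompatible with smooth-in-time solutions). Once $D^TF=0$, you get $(B^TF,\widetilde U)|_{\Gamma_1}\equiv 0$---but only on $\Gamma_1$, not in $\Omega$. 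Now set $G=B^TF$. You no longer know $(G,\widetilde U)\equiv 0$ in $\Omega$, so you cannot conclude $\partial_\nu(G,\widetilde U)\equiv 0$ on $\Gamma_1$. Your claim that the normal trace of $(G,\widetilde U)$ vanishes as $t\to 0^+$ is circular: from the boundary condition itself, $\partial_\nu(G,\widetilde U)(0^+)=(G,\partial_\nu\widetilde U(0^+))=(G,Dh)=(D^TG,h)$, which is precisely the quantity you are trying to show is zero. The boundary relation at $t=0^+$ collapses to the tautology $(D^TG,h)=(D^TG,h)$, and the induction stalls.

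The paper circumvents this by making essential use of the two hypotheses your argument barely touches: $C_p$-compatibility and approximate synchronizability by $p$-groups. It first shows, via the approximate null controllability of the \emph{reduced} system \eqref{7.7}, that no $E_i$ lies in $\hbox{Im}(C_p^T)$; this yields the direct sum $\mathbb R^N=V\oplus\hbox{Im}(C_p^T)$. One can then write $A^TE_i=\sum_j\alpha_{ij}E_j+C_p^Ty_i$ and $B^TE_i=\sum_j\beta_{ij}E_j+C_p^Tx_i$, and the vanishing of the $C_p^T$-components is obtained by testing against $C_p\widehat U$ and invoking, once more, the approximate controllability of the reduced system (for $B^T$ this uses $(x_i,C_p\widehat U)|_{\Gamma_1}=0$ together with the trace regularity). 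This decomposition is what replaces your iteration and gives the $B^T$-invariance directly, in one step rather than through an inductive trace argument.
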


\begin{proof}   Fixing $\widehat U_0=\widehat U_1=0$, by  Proposition \ref{th5.1}, the  linear map
$$F:\quad H\rightarrow  U$$
is continuous, therefore,  infinitely differential
from the control space $\mathcal L^M$ to the space $C_{loc}^0([0, +\infty); (\mathcal H_0)^N)\cap C_{loc}^1([0, +\infty); (\mathcal H_{-1})^N).$  

Let
$\widehat U$ be defined by 
$$\widehat U = F'(0)\widehat H,$$ 
where $F'(0)$ is the Fr\^echet differential  of $F$, and    $\widehat H\in  \mathcal  L^M$ is any given boundary control.

Then, by linearity we have
\begin{equation}\left\lbrace
\begin{array}{ll}\widehat U''-{\Delta} \widehat U+A\widehat U=0&\hbox{in }  (0, +\infty)\times \Omega,\\
\widehat  U= 0 &\hbox{on }  (0, +\infty)\times\Gamma_0,\\
\partial_\nu \widehat U + B\widehat U = D\widehat  H&\hbox{on }  (0, +\infty)\times\Gamma_1,\\
t=0:\quad \widehat U =\widehat U'=0&\hbox{in }   \Omega.\end{array}
\right.\label{8.1}
\end{equation}
Let $V = \hbox{Span}\{E_1, \cdots,  E_p\}$. Then,  the  independence   of the projection of $U$ on the subspace $V$, with respect to the boundary  controls,  implies that
\begin{equation}(E_i, \widehat U)\equiv 0 \quad \hbox{ in } (0, +\infty) \times \Omega \quad \hbox { for  } 1\leqslant i\leqslant p.\label{8.2}\end{equation}

We first show that  $E_i\not \in \hbox{Im}(C_p^T)$ for any given $i$ with  $1\leqslant i\leqslant p$.  Otherwise, there exist an $i$ with  $1\leqslant i\leqslant p$ and a vector $x_i\in\rr^{N-p}$ such that $E_i=C_p^Tx_i$.
Then, it follows from \eqref{8.2} that
$$0 = (E_i, \widehat U)  = (x_i, C_p\widehat U).$$
Since $W=C_p\widehat U$ is the solution to the reduced system \eqref{7.7} with $H = \widehat H$, which  is approximately  controllable, we get  thus $x_i=0$,  which contradicts $E_i\not =0$. Thus, since  $\hbox{dim Im}(C_p^T) = N-p$ and
$\hbox{dim}(V)=p,$ we have $V \oplus \hbox{Im}(C_p^T) = \mathbb R^N$.
Then, for any given  $i$ with $1\leqslant i \leqslant p$, there exists a vector $y_i\in \mathbb R^{N-p}$, such that
$$A^TE_i = \sum_{j=1}^p\alpha_{ij} E_j + C_p^Ty_i.$$
Noting (\ref{8.2})  and applying $E_i$ to system  (\ref{8.1}), it follows that
$$0  = (A\widehat U, E_i) = (\widehat U, A^TE_i) = (\widehat U, C_p^Ty_i) = (C_p\widehat U, y_i). $$
Once again, the approximate controllability of the reduced system \eqref{7.7}   implies that $y_i=0$ for $1\leqslant i \leqslant p$. Then, it follows that
$$A^TE_i = \sum_{j=1}^p\alpha_{ij} E_j,\quad 1\leqslant i\leqslant p.$$
So, the subspace $V$ is invariant for $A^T$.

In \cite{RobinExact}, by the sharp regularity given in  \cite{Lasciecka1, Lasciecka2} on Neumann type mixed problem,     we improved the regularity \eqref{4.3} of the solution to problem \eqref{8.1}.  In fact, setting
\begin{equation} \alpha=\begin{cases}3/5-\epsilon, & \Omega \hbox{ is a bounded smooth domain},\\
3/4-\epsilon,&\Omega \hbox{ is a parallelepiped},\end{cases}\label{8.3}\end{equation}
where $\epsilon>0$ is a sufficiently small number,  the trace
\begin{equation} \widehat U|_{\Gamma_1}\in( H_{loc}^{2\alpha-1}((0, +\infty) \times \Gamma_1))^N\label{8.4}\end{equation}
 with  the corresponding  continuous  dependence with respect to  $ \widehat H$.

Next, noting (\ref{8.2}) and applying $E_i \ (1\leqslant i \leqslant p)$ to  the boundary condition on $\Gamma_1$ in   (\ref{8.1}), we get
\begin{equation}(D^TE_i, \widehat H) = (E_i, B\widehat U).\label{8.4b}\end{equation}
Then, it follows that 
\begin{align}\|(D^TE_i, \widehat H)\|_{H^{2\alpha-1}((0,  T)\times \Gamma_1)}  
\leqslant c\|\widehat U\|_{H^{2\alpha-1}((0,  T)\times \Gamma_1)}.\label{8.4c}\end{align}
On the other hand, by  the continuous dependence \eqref{8.4}, we have
\begin{align}\| \widehat U\|_{H^{2\alpha-1}((0,  T)\times \Gamma_1)}  
\leqslant c\|\widehat H\|_{L^2((0,  T)\times \Gamma_1)}.\label{8.4g}\end{align}
Then inserting \eqref{8.4g} into \eqref{8.4c}, we get 
\begin{align}\|(D^TE_i, \widehat H)\|_{H^{2\alpha-1}((0,  T)\times \Gamma_1)}    
\leqslant c
\|\widehat H\|_{L^2((0, T)\times \Gamma_1)}.\label{8.4e}\end{align}
Taking  $\widehat H=  D^TE_i h$ in \eqref{8.4e}, we get
\begin{align}\|D^TE_i\|\|h\|_{H^{2\alpha-1}((0,  T)\times \Gamma_1)}  
\leqslant c
\|h\|_{L^2((0, +T)\times \Gamma_1)},\quad \forall h\in L^2((0,  T)\times \Gamma_1).\end{align}
Because of the compactness of the embedding $H^{2\alpha-1}((0, T)\times \Gamma_1)$ to $L^2((0,  T)\times \Gamma_1)$ for  $2\alpha-1>0$, we deduce that 
\begin{align}D^TE_i=0, \quad 1\leqslant i\leqslant p.\label{8.4d}\end{align}
Then it follows from \eqref{8.4d} that 
\begin{equation}  V\subseteq \hbox{ Ker}(D^T).\end{equation}
Moreover, for $1\leqslant i \leqslant p$ we have \begin{equation} (E_i, B\widehat U) =0\quad \hbox{on } (0, +\infty)\times \Gamma_1.\label{8.5}\end{equation}

Now, let $x_i\in \mathbb R^{N-p}$, such that
\begin{equation} B^TE_i = \sum_{j=1}^p\beta_{ij} E_j + C_p^Tx_i.\label{8.6}\end{equation}
Noting \eqref{8.2}  and inserting  the expression  \eqref{8.6} into  \eqref{8.5},   it follows  that
$$  (x_i, C_p\widehat U)= 0\quad \hbox{on } (0, +\infty)\times \Gamma_1.$$  Once again, because of the approximate  boundary controllability of the reduced system  \eqref{7.7}, we deduce  that
$x_i=0$ for $1\leqslant i \leqslant p$.  Then, we get
$$B^TE_i = \sum_{j=1}^p\beta_{ij} E_j,\quad 1\leqslant i \leqslant p.$$
So,  the subspace $V$ is also invariant for $B^T$.

Finally, since dim$(V)=p$, by Lemma \ref{th2.1} and Proposition \ref{th7.3},   $\hbox{Ker}(\mathcal R^T) = V$.
Then,  by assertion (i) of Theorem \ref{th7.5},    $ \hbox{Ker}(\mathcal R^T)$ is bi-orthonormal to
Ker$(C_p)$. This achieves the  proof. \end{proof}

Let $d$ be a column vector of  $D$ and be  contained in $\hbox{Ker}(C_p)$. Then it will  be canceled  in the product matrix $C_pD$, therefore it can not give any effect to the reduced system  (\ref{7.7}). However,  the vectors in $\hbox{Ker}(C_p)$  may play an important  role for the approximate boundary  controllability. More precisely, we have the following

\begin{theoreme}   \label{th8.2} Let  $A$ and $B$ satisfy the  conditions of $C_p$-compatibility (\ref{7.3}) and (\ref{7.5}), respectively.
Assume  that  system (\ref{4.1}) is approximately
synchronizable by $p$-groups under the action of a boundary control matrix $D$. Assume furthermore that
\begin{equation}e_1,\cdots, e_p\in\hbox{Im}(D),\label{8.6b}\end{equation}  where  $e_1,\cdots, e_p$ are given by \eqref{7.1}. Then  system (\ref{4.1}) is actually approximately null controllable.
\end{theoreme}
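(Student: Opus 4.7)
The plan is to argue by contradiction: assume $\hbox{dim Ker}(\mathcal R^T)=d\geqslant 1$, then show that the hypothesis $e_1,\dots,e_p\in \hbox{Im}(D)$ forces $\hbox{Ker}(\mathcal R^T)\subseteq \hbox{Im}(C_p^T)$, and finally combine this with the synchronization convergence to derive a contradiction. If the contradiction is established, then $\hbox{rank}(\mathcal R)=N$, and approximate null controllability follows from Proposition \ref{th3.3} and Proposition \ref{th4.3}.

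First I would set up the algebraic structure. Suppose $\hbox{Ker}(\mathcal R^T)=\hbox{Span}\{E_1,\dots,E_d\}$ with $d\geqslant 1$. By Lemma \ref{th2.1}, each $E_i\in \hbox{Ker}(D^T)$ and the subspace is invariant for both $A^T$ and $B^T$, so there exist coefficients $\alpha_{ij},\beta_{ij}$ with $A^TE_i=\sum_{j}\alpha_{ij}E_j$ and $B^TE_i=\sum_{j}\beta_{ij}E_j$. Since $e_1,\dots,e_p\in \hbox{Im}(D)$ and $E_i\in \hbox{Ker}(D^T)=\hbox{Im}(D)^\perp$, we have $(E_i,e_j)=0$ for all $i,j$. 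Noting (\ref{e_p}), this gives $E_i\in \hbox{Ker}(C_p)^\perp=\hbox{Im}(C_p^T)$, so there exists $x_i\in\mathbb R^{N-p}$ such that $E_i=C_p^Tx_i$.

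Next I would use the approximate synchronization hypothesis. Let $\{H_n\}$ be a sequence of controls realizing the approximate boundary synchronization by $p$-groups, and let $\{U_n\}$ be the corresponding solutions to problem (\ref{4.1})--(\ref{4.2}). By (\ref{7.2}), $C_pU_n\to 0$ in the relevant function space on $[T,+\infty)$. Setting $u_i=(E_i,U_n)$, the invariance relations (\ref{3.6a})--(\ref{3.6}) imply (exactly as in the derivation of (\ref{4.5})--(\ref{4.6}) and (\ref{7.14})--(\ref{7.15})) that the vector $(u_1,\dots,u_d)$ satisfies a closed homogeneous system whose coefficients and initial data $(E_i,\widehat U_0), (E_i,\widehat U_1)$ are \emph{independent of the control $H_n$}. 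Therefore $u_i$ itself is independent of $n$. On the other hand,
\begin{equation}
(E_i,U_n)=(C_p^Tx_i,U_n)=(x_i,C_pU_n)\longrightarrow 0
\end{equation}
in $C^0_{loc}([T,+\infty);\mathcal H_0)\cap C^1_{loc}([T,+\infty);\mathcal H_{-1})$.

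The final step is to extract the contradiction. Since $u_i$ does not depend on $n$, the limit forces $u_i(T)\equiv u_i'(T)\equiv 0$. The closed reduced system satisfied by $(u_1,\dots,u_d)$ is well-posed and time-reversible, hence $(E_i,\widehat U_0)=(E_i,\widehat U_1)=0$ for every initial data $(\widehat U_0,\widehat U_1)\in(\mathcal H_0)^N\times(\mathcal H_{-1})^N$. This yields $E_i=0$ for each $i$, contradicting $\hbox{dim Ker}(\mathcal R^T)=d\geqslant 1$. The only delicate point is checking that the pairing $(x_i,C_pU_n)$ makes sense in the distributional duality used in Section 3 so that the convergence $(E_i,U_n)\to 0$ can be justified rigorously — but this is the same type of duality argument already carried out in the proofs of Theorem \ref{th6.1} and Theorem \ref{th7.5}(i), so no new analytic difficulty arises.
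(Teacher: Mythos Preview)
Your contradiction argument correctly establishes that $\hbox{Ker}(\mathcal R^T)=\{0\}$, i.e.\ $\hbox{rank}(\mathcal R)=N$, under the stated hypotheses. The algebraic observation that $e_1,\dots,e_p\in\hbox{Im}(D)$ forces $\hbox{Ker}(\mathcal R^T)\subseteq\hbox{Ker}(C_p)^\perp=\hbox{Im}(C_p^T)$, followed by the projection argument borrowed from Theorems \ref{th6.1} and \ref{th7.5}(i), is sound.

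The gap is in the very last step. You claim that once $\hbox{rank}(\mathcal R)=N$, approximate null controllability follows from Propositions \ref{th3.3} and \ref{th4.3}. But Proposition \ref{th3.3} only gives the implication in the \emph{other} direction: $D$-observability $\Rightarrow$ $\hbox{rank}(\mathcal R)=N$. Its converse statement requires the much stronger hypothesis $\hbox{rank}(D)=N$, which you do not have. Section 5 of the paper is devoted precisely to the fact that $\hbox{rank}(\mathcal R)=N$ is in general \emph{not sufficient} for $D$-observability; this is not a Holmgren-type situation, and only partial results are known. So from $\hbox{rank}(\mathcal R)=N$ alone you cannot conclude approximate null controllability.

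The paper's proof avoids this obstruction by working directly with $D$-observability of the adjoint system \eqref{3.1}, never passing through the rank condition. Given $D^T\Phi\equiv 0$ on $[0,T]\times\Gamma_1$, it uses $e_r\in\hbox{Im}(D)$ and the $C_p$-compatibility of $A,B$ to show (via Holmgren) that $(e_r,\Phi)\equiv 0$, hence $\Phi=C_p^T\Psi$ with $\Psi$ solving the reduced adjoint system \eqref{7.7b} and satisfying $(C_pD)^T\Psi\equiv 0$. The key point is that the \emph{approximate synchronizability} hypothesis, via Proposition \ref{th7.a1}, guarantees that \eqref{7.7b} is $C_pD$-observable, so $\Psi\equiv 0$ and thus $\Phi\equiv 0$. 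In other words, the synchronizability assumption is used analytically (as an observability statement for the reduced system), not merely algebraically, and this is exactly what your argument is missing.
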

\begin{proof} By Proposition \ref{th4.3}, it is sufficient to show that  the adjoint system (\ref{3.1}) is $ D$-observable. For $1\leqslant r\leqslant  p$, applying $e_r$  to the adjoint system (\ref{3.1}) and noting
$\phi_r =(e_r, \Phi)$, it follows that
\begin{equation}\begin{cases}\phi_r''-\Delta \phi_r +\sum_{s=1}^p\widehat \alpha_{rs}\phi_s=0&\hbox{ in }  (0, +\infty)\times \Omega,\\
\phi_r=0 & \hbox{ on }  (0, +\infty)\times \Gamma_0,\\
\partial_\nu\phi_r +\sum_{s=1}^p\widehat \beta_{rs}\phi_s=0 & \hbox{ on }  (0, +\infty)\times \Gamma_1,\end{cases}\end{equation}
where the constant coefficients  $\widehat \alpha_{rs}$ and $\widehat \beta_{rs}$ are given by
\begin{equation}Ae_r=\sum_{s=1}^p\widehat \alpha_{rs}e_s,\quad Be_r=\sum_{s=1}^p\widehat \beta_{rs}e_s,\quad 1\leqslant r\leqslant p.\end{equation}
On the other hand, noting \eqref{8.6b},  the $ D$-observation \eqref{3.5}
implies that
\begin{equation}\phi_r\equiv 0 \quad
\hbox{ on }  (0, T)\times\Gamma_1\end{equation}
for $1\leqslant r\leqslant p$.
Then,   by Holmgren's uniqueness theorem, we get
\begin{equation}\phi_r\equiv 0\quad \hbox{in } (0,+\infty)\times \Omega \label{8.8}\end{equation}
for $1\leqslant r\leqslant p.$
Thus,   $\Phi\in \hbox{Im}(C_p^T)$, then we can write $\Phi= C_p^T\Psi$ and the adjoint system (\ref{3.1}) becomes
\begin{equation}\begin{cases}C_p^T\Psi''-C_p^T\Delta \Psi+A^TC_p^T\Psi=0&\hbox{ in }  (0, +\infty)\times \Omega,\\
 C_p^T\Psi=0 & \hbox{ on }  (0, +\infty)\times \Gamma_0,\\
 C_p^T\partial_\nu\Psi +  B^TC_p^T\Psi=0 & \hbox{ on }  (0, +\infty)\times \Gamma_1.\end{cases}\end{equation}
Noting the  conditions of $C_p$-compatibility (\ref{7.3}) and (\ref{7.5}), it follows that
\begin{equation}\begin{cases}C_p^T(\Psi''- \Delta \Psi+ \overline A_p^T\Psi)=0&\hbox{ in }  (0, +\infty)\times \Omega,\\
 C_p^T\Psi=0 & \hbox{ on }  (0, +\infty)\times \Gamma_0\\
 C_p^T(\partial_\nu\Psi +   \overline B_p^T\Psi)=0 & \hbox{ on }  (0, +\infty)\times \Gamma_1.\end{cases}\end{equation}
Since the map $C_p^T$ is injective,  we find again the reduced adjoint system \eqref{7.7b}.
Accordingly, the $ D$-observation \eqref{3.5} implies that
\begin{equation}  D^T\Phi \equiv D^TC^T_p\Psi\equiv 0.\end{equation}
Since system (\ref{4.1}) is approximately synchronizable by $p$-groups under the action of the boundary control matrix $D$, by Proposition \ref{th7.a1},   the reduced adjoint system \eqref{7.7b} for $\Psi$ is $C_pD$-observable, therefore, $\Psi\equiv 0$, then  $\Phi\equiv 0$.  So,  the adjoint   system (\ref{3.1}) is $ D$-observable,  then  by Proposition  \ref{th4.3}, system (\ref{4.1})  is approximately  null controllable.
\end{proof}

\vskip 1cm

{\bf Acknowledgement} This work was partially supported by  National Natural Science Foundation of China under Grant 11831011.

\vskip1cm


\begin{thebibliography} {99}

\bibitem{alabaub} F. Alabau-Boussouira, A hierarchic multi-level
energy method for the control of bidiagonal and mixed n-coupled
cascade systems of PDE's by a reduced number of controls, Adv.
Diff. Equ., 18 (2013), 1005-1072.

\bibitem{alabau-li-rao} F. Alabau-Boussouira;  T.-T. Li;  B. Rao, Indirect observation and control for a coupled cascade system of wave equations with Neumann boundary conditions, in preparation.

\bibitem{ammar}  F. Ammar Khodja, A. Benabdallah, C. Dupaix,  Null-controllability of some reaction-diffusion
systems with one control force,  J. Math. Anal. Appl. 320 (2006),  928-944.

\bibitem{Dehman} B. Dehman;  J. Le Rousseau;  M. L\'eautaud,
Controllability of two coupled wave equations on a compact
manifold, Arch. Ration. Mech. Anal., 211 (2014), 113-187.

\bibitem{ammar2} E. Fern\'endez-Cara,  M.  Gonz\'alez-Burgos, L. de Teresa,
Boundary controllability of parabolic coupled equations,   Journal of Func Anal,  259 (2010), 1720-1758

\bibitem{hulong}  L. Hu; T.-T. Li;   B. Rao,  Exact  boundary   synchronization for a coupled system of 1-D wave equations with coupled   boundary controls of dissipative type,  Comm. Pure Appl. Anal., 13 (2014),  881-901.

\bibitem{hulong2}  L. Hu; T.-T. Li;   P. Qu,  Exact  boundary   synchronization for a coupled system of 1-D quasilinear wave equations,  ESAIM: COCV 22 (2016), 1163-1183.

\bibitem{Hugense}  Ch. Huygens,   Oeuvres Compl\`etes , Vol.15, Swets
\& Zeitlinger B.V., Amsterdam, 1967.

\bibitem{Lasciecka1}  I. Lasiecka; R. Triggiani: Sharp regularity theory for second order hyperbolic equations ofNeumann type. Part I. L2  nonhomogeneous data,  Annali di Matematica Pura ed Applicata, 157 (1990), 285-367.

\bibitem{Lasciecka2}  I. Lasiecka; R. Triggiani:  Regularity theory of hyperbolic equations with non-homogeneous Neumann boundary conditions. Part II. General boundary data, J. Diff. Equs., 94 (1991), 112-164.

\bibitem{ExSynNeumann}   T.-T. Li; X. Lu;  B. Rao,  Exact boundary synchronization for  a  coupled system of wave equations with Neumann controls, Chin. Ann. Math., S\'eries B, 39 (2018), 233-252.



\bibitem{RobinExact}   T.-T. Li;  X. Lu; B. Rao,   Exact  boundary  controllability and exact  boundary synchronization for a coupled system of wave equations with coupled Robin  boundary controls, to appear in ESAIM: Control, Optimisation and Calculus of Variations, 124 (2018), 1675-1704.

\bibitem{excD}  T.-T. Li;  B. Rao,  Exact  boundary synchronization for a
coupled system of wave equations with Dirichlet boundary
controls, Chin. Ann. Math. Ser. B, 34 (2013), 139-160.

\bibitem{AppSynDirichlet}   T.-T. Li;  B. Rao:  Asymptotic controllability and asymptotic synchronization for a coupled system of wave equations with Dirichlet boundary controls. Asymptot. Anal. 86 (2014),  199-226.




\bibitem{JMPA2016}   T.-T. Li; B. Rao, Exact synchronization by groups for a coupled system of wave
equations with Dirichlet controls, J. Math. Pures  Appl. 105 (2016), 86-101.

\bibitem{RaoSicon}    T.-T. Li;  B. Rao:  Criteria of Kalman's type  to the approximate controllability and the approximate synchronization for a coupled system of wave equations with Dirichlet boundary controls,  SIAM J. Control Optim., 54 (2016), 49-72.

\bibitem{Cocv2017}   T.-T. Li;  B. Rao:  On the approximate boundary synchronization for a coupled system of wave equations: Direct and indirect boundary controls,  ESIAM: Contr.  Optim.  Cal.  Var., 24 (2018),  1975-1704.

\bibitem{ExConNeumann}   T.-T. Li;  B. Rao,  Exact boundary controllability for  a  coupled system of wave equations with Neumann controls, Chin. Ann. Math., S\'eries B, 38 (2017), 473-488.

\bibitem{Book2019} T.-T. Li;  B. Rao,  {\rm Boundary Synchronization for Hyperbolic Systems},  Progress in Non Linear Differential Equations and Their Applications, Subseries in Control, 94,  Birkha\"user,  2019. 

\bibitem{Wei}    T.-T. Li;   B. Rao;   Y. Wei,   Generalized exact boundary synchronization for a coupled system of wave equations,
Discrete Contin. Dyn. Syst., 34 (2014), 2893-2905.

\bibitem{Lions2}   J.-L. Lions: Equations diff\'erentielles op\'erationnelles et probl\`emes aux limites, Grundlehren Vol. 111, Berlin/G\"{o}ttingen/Heidelberg, Springer, 1961.

\bibitem{Lions}   J.-L. Lions,  Contr\^olabilit\'e Exacte,
Perturbations et Stabilisation de Syst\`emes Distribu\'es, Vol.
1, Masson, Paris, 1985.

\bibitem{liu-rao}  Z. Liu; B. Rao,    A spectral approach to the
indirect boundary control of a system of weakly coupled wave
equations, Discrete Contin. Dyn. Syst., 23 (2009), 399-413.

\bibitem{lux} X. Lu,  Exact  boundary  controllability and exact  boundary synchronization for a coupled system of wave equations with Neumann and coupled Robin  boundary controls, Ph. D. Thesis  of University of Strasbourg, IRMA 2018/002.

\bibitem{luq} Q. Lu , E.  Zuazua,   Averaged controllability for random evolution partial differential equations. J. Math. Pures Appl.,  105 (2016),  367-414.

\bibitem{rosier} L. Rosier,  L. de Teresa,   Exact controllability of a cascade system of conservative equations,  C. R. Math. Acad. Sci. Paris 349 (2011), 291-295.

\bibitem{wanglijuan} Wang  Lijuan; Yan Qishu,  {\em  Optimal control problem for exact synchronization of parabolic system}, Math. Control Relat. Fields 9 (2019),  411-424.
 
\bibitem{pazy} A. Pazy, Semigroups of Linear Operators and Applications to Partial Differential Equations, Applied Mathematical Sciences, 44, Springer-Verlag, New York, 1983.

\bibitem{wiener} N. Wiener, Cybernics, or control and communication in the animal and the machine, 2nd ed. The M. I. T. Press, Combridge, Mass., John Wiley \& Sons, Inc., New York-Londer, 1961.



\bibitem{zuazua} E. Zuazua,   Averaged control, Automatica J. IFAC 50 (2014),  3077-3087.

\end{thebibliography}
\end{document}